\documentclass{amsart}
\usepackage{amssymb,latexsym}
\usepackage[mathscr]{eucal}
\usepackage{mathtools}
\usepackage{enumitem}

\theoremstyle{plain}
\newtheorem{theorem}{Theorem}
\newtheorem{corollary}[theorem]{Corollary}
\newtheorem{conjecture}[theorem]{Conjecture}
\newtheorem{lemma}[theorem]{Lemma}

\theoremstyle{definition}
\newtheorem{example}{Example}
\newtheorem{definition}{Definition}

\newcommand{\refT}[1]{Theorem~\ref{T#1}}
\newcommand{\refC}[1]{Corollary~\ref{C#1}}
\newcommand{\refCon}[1]{Conjecture~\ref{Con}}
\newcommand{\refEx}[1]{Example~\ref{Ex#1}}
\newcommand{\refL}[1]{Lemma~\ref{L#1}}

\newcommand{\gd}{\delta}  \newcommand{\ep}{\varepsilon}  
\newcommand{\gf}{\varphi}  \newcommand{\vta}{\vartheta}

\newcommand{\Z}{\mathbb{Z}}  \newcommand{\N}{\mathbb{N}}
\newcommand{\CA}{\mathcal{A}}    
\newcommand{\CN}{\mathcal{N}}  \newcommand{\CM}{\mathcal{M}}
\newcommand{\CR}{\mathcal{R}}
\newcommand{\ut}{\underline{t}}  \newcommand{\us}{\underline{s}}

\newcommand{\cycp}{\Phi_{pqr}}
\newcommand{\la}{\langle}  \newcommand{\ra}{\rangle}
\newcommand{\sq}{^{}_Q}  \newcommand{\sr}{^{}_R}
\newcommand{\sel}{^{}_L}  \newcommand{\slp}{^{}_{L'}}

\DeclareMathOperator{\sgn}{sgn}

\author[G. Bachman]{Gennady Bachman}
\address{Department of Mathematical Sciences\\ University of Nevada Las Vegas\\
4505 Maryland Parkway, Box 454020\\
Las Vegas, Nevada 89154-4020, USA}
\email{gennady.bachman@unlv.edu}

\date{January 2026}

\begin{document}

\title[Cyclotomic Polys: Heights and Diameters]{On Heights and Diameters of Ternary Cyclotomic and Inclusion-Exclusion Polynomials}

\begin{abstract}
For the $n$th cyclotomic polynomial $\Phi_n$, let $A(n)$ denote the greatest absolute value of its coefficients, its height, and let $D(n)$ denote the difference between its largest and smallest coefficients, its diameter. We show that for any odd prime $p$ and an integer $h$ in the range $1\le h\le(p+1)/2$, there are arbitrarily large primes $q$ and $r$ such that $\cycp$ has the height $h$. This certainly answers the question of whether every natural number occurs as the height of some cyclotomic polynomial. Our construction specifies explicit choices of $q$ and $r$ with $A(pqr)=h$, and for these choices $D(pqr)$ has one of two values: it is either $2h$ or $2h-1$, depending on the congruence class of $h$ modulo $p$.
\end{abstract}

\subjclass[2020]{Primary 11B83; Secondary 11C08}

\keywords{Cyclotomic polynomials, inclusion-exclusion polynomials, heights of polynomials, diameters of polynomials}

\maketitle

\section{Introduction}
Let $\Phi_n$ be the $n$th cyclotomic polynomial, that is, $\Phi_n(x)=\prod(x-\zeta)$, where the product is taken over the primitive $n$th roots of unity $\zeta$, and write
\[
\Phi_n(x)=\sum_{0\le m\le\gf(n)}a_mx^m \qquad [a_m=a(m;n)],
\]
where $\gf$ is the Euler's totient function. It is well known that the coefficients $a_m$ are integral and there is a growing body of literature studying their properties and especially properties of the function
\[
A(n)\coloneqq\max_m|a_m|=\max_m|a(m;n)|.
\]
(See \cite{Bz1,Bz2} and the references therein.) As has become increasingly common in the literature, we shall refer to $A(n)$ as the height of $\Phi_n$. It is an interesting and nontrivial question if there is a polynomial $\Phi_n$ of any given height $h$? To gain a better understanding of this question we review the following easy facts (see, for example, \cite{Le}). To investigate possible heights, it suffices to study possible values of $A(\prod_{1\le i\le k}p_i)$, for primes $2<p_1<\dots<p_k$. This is because if $\prod_{1\le i\le k}p_i$ is the ``odd squarefree kernel'' of $n$, then $A(n)=A(\prod_{1\le i\le k}p_i)$. If $k$ is the number of distinct odd prime factors of $n$, polynomial $\Phi_n$ is said to be of order $k$. The height question naturally splits into the family of equations
\begin{equation}\label{1}
A\Bigl(\prod_{1\le i\le k}p_i\Bigr)=h,
\end{equation}
one equation for each order $k$. It is an easy fact that $A(\prod_{1\le i\le k}p_i)\equiv1$, for $k<3$. For $k\ge3$ the problem is challenging and wide open.

Recent progress in the study of ternary cyclotomic polynomials ($k=3$) furnished tools that allowed one to tackle the solubility of
\begin{equation}\label{1t}
A(pqr)=h
\end{equation}
in odd primes $p<q<r$. Early successes were limited to some special cases where $h$ is small and are as follows. 
\begin{itemize}
\item Bachman \cite{Ba.Flat}: For every prime $p$, $A(pqr)=1$ is soluble in arbitrary large primes $q$ and $r$.
\item Elder\cite[unpublished]{El}/Zhang\cite{Zh1}: For every prime $p$, $A(pqr)=2$ is soluble in arbitrary large primes $q$ and $r$.
\item Zhang\cite{Zh2}: For every prime $p\equiv1\pmod3$, $A(pqr)=3$ is soluble in arbitrary large primes $q$ and $r$.
\end{itemize}
The latter result of Zhang was preceded by a conditional result of Gallot, Moree and Wilms \cite{GMW}: For every prime $p$ such that $2p-1$ is also prime, $A(p(2p-1)r)=3$ is soluble in arbitrary large primes $r$. In view of the history of this problem, the case $h=1$ is particularly intriguing (see discussion in \cite{Ba.Flat}) and also the easiest (unless one treats $h$ in \eqref{1t} as a function of $p$ and takes $h=(p+1)/2$) and certainly the best understood case to date---see \cite{Ka.Flat, Ka.4Flat, El, BM.IEP}. A particularly appealing result in this case is due to Kaplan \cite{Ka.Flat} and is stated as \refT{4} in the next section.

For an arbitrary $h$, a conditional affirmative answer to \eqref{1t} was recently given by Kosyak, Moree, Sofos and Zhang \cite{KMSZ}. They showed that if a certain short interval $I_h$ depending on $h$ contains a prime $p$, roughly
\begin{equation}\label{2}
2h-1-\sqrt{2h-1}<p\le2h-1,
\end{equation}
then there are arbitrarily large primes $q$ and $r$ such that $A(pqr)=h$. Of course, the difficulty here is that the existence of a prime in a short interval \eqref{2} remains an open problem---it should be compared with Legendre's conjecture that there is always a prime between consecutive squares. However, using a recent advance on the distribution of primes due to Heath-Brown \cite{HB}, they are able to say that $A(pqr)=h$ is soluble for almost all $h$. More precisely, they conclude that the number of ``bad'' $h\le x$ is $O_\ep(x^{3/5+\ep})$, as $x$ goes to infinity.

Using a different approach, Bachman, Bao and Wu \cite{BBW.Involve} showed that for every positive integer $h$ the equation
\[
A(pqr)=h\quad\text{or}\quad h+1
\]
is always soluble in arbitrarily large primes $q$ and $r$. A notable feature of their approach is that it has only minimal requirements from the theory of distribution of primes, namely, the Dirichlet's theorem for primes in arithmetic progressions. They also make the following conjecture.

\begin{conjecture}\label{Con}For an odd prime $p$, let $M(p)\coloneqq\max_{q,r}A(pqr)$. Then for every $p$ and $1\le h\le M(p)$, the equation $A(pqr)=h$ is soluble in arbitrarily large primes $q$ and $r$.
\end{conjecture}

Establishing the solubility of \eqref{1t} for all $h\ge1$ was the impetus for the current work. Our analysis yields a number of new results for heights and diameters of ternary cyclotomic and inclusion-exclusion polynomials. Those will be given in the next section after the necessary background material has been presented. But we include in this section the simplest and cleanest of our results, a result which may be thought of as a step towards \refCon{Con}.

\begin{theorem}\label{T2}
Let $p$ be an odd prime and let $h$ be any integer in the range $1\le h\le(p+1)/2$. Then there exist arbitrarily large primes $q$ and $r$ such that $A(pqr)=h$.
\end{theorem}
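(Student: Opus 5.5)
The plan is to work with the standard combinatorial description of ternary coefficients, reduce the height to an extremal count in a two-dimensional lattice region, and then choose $q$ and $r$ modulo $p$ so that this count is exactly $h$.

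The description I will use is Kaplan's formula (in the form used in \cite{Ka.Flat, BBW.Involve}). Let $q^*$ and $r^*$ be the inverses of $q$ and $r$ modulo $p$, taken in $[1,p-1]$. Then for each $m$,
\[
a(m;pqr)=\sum_{0\le i\le p-1}\chi_i(m),
\]
where $\chi_i(m)\in\{-1,0,1\}$ is the coefficient of $x^{f(i)}$ in $\Phi_{qr}$, with $f(i)$ determined by $m$, $i$ and $qr$. The signs of these binary coefficients are governed by the Lam--Leung description: a binary coefficient equals $1$ or $-1$ according to which of two rectangles in the $(u,v)$-lattice the corresponding point lies in. Combining the two descriptions, the value of $a(m;pqr)$ becomes a signed count: the number of $i\in[0,p-1]$ whose residues $iq^*$ and $ir^*$ modulo $p$ fall into a ``positive'' box, minus the number that fall into a ``negative'' box.

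For the upper bound $A(pqr)\le h$, I will choose the parameters so that these boxes are very thin. Concretely, I will fix $q\equiv 1$ or a small residue modulo $p$ and $r\equiv\pm$ a suitable residue modulo $pq$, arranged so that $r^*$ (equivalently the rescaled parameter $\rho$ with $r\equiv\pm\rho\pmod{pq}$) is tied to $h$. A natural guess is $q^*\equiv 2h-1$ or $r\equiv \pm(2h-1)q$ modulo $p$; this is in the spirit of \cite{KMSZ}, where $p\approx 2h-1$. The key point is that $p$ is arbitrary here, so the residue classes of $q$ and $r$ must carry the dependence on $h$ rather than the size of $p$.

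With the parameters fixed, the signed count reduces to counting multiples $i\cdot c\bmod p$ lying in an interval of length about $h$. Using a discrete convexity argument on the sequence $\{ic\bmod p\}$ — its consecutive runs increase by a fixed step — I will show that the running partial sums never exceed $h$ in absolute value.

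The lower bound, exhibiting an $m$ with $|a(m;pqr)|=h$, should follow by choosing $m$ so that the box captures the $h$ consecutive residues $0,c,\dots,(h-1)c$ all with the same sign. The restriction $h\le (p+1)/2$ enters exactly here: one needs $2h-1\le p$ for these residues to fit without wrapping and cancelling. Finally, Dirichlet's theorem supplies arbitrarily large primes $q$ in the required class modulo $p$, and then primes $r$ in the required class modulo $pq$.

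I expect the main obstacle to be the upper bound. The extremal coefficient has to be controlled uniformly over all $m$, and the interaction between the $q$-box and the $r$-box can in principle create larger partial sums. My approach is to choose $q\equiv1\pmod p$, which makes one coordinate trivial, so that only the $r$-residues matter and the problem collapses to one dimension. I would then verify the bound by a direct case analysis on the position of $m$ relative to the boxes. If $q\equiv 1$ forces the height to $1$, as with flat examples, I will instead take $q\equiv 2$ or $q^*=2h-1$ and adjust accordingly.
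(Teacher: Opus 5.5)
Your proposal has a genuine gap: it never fixes the residue classes of $q$ and $r$. As a result there is no family of triples for which you actually prove either $A(pqr)\le h$ or the existence of a coefficient of absolute value $h$.

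You list several candidates ($q\equiv1$, $q\equiv2$, $q^*\equiv2h-1$, $r\equiv\pm(2h-1)q$) plus a fallback. The two decisive steps are asserted rather than derived: the ``discrete convexity'' bound on the signed counts, and the choice of $m$ capturing $h$ same-sign residues. These steps are the whole content of the theorem. Uniform control of $a(m;pqr)$ over all $m$ is exactly the hard part, as you acknowledge yourself.

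The reduction is also only gestured at. The standard form of Kaplan's lemma writes $a(m;pqr)$ through $2p$ truncated coefficients of $\Phi_{pq}$, indexed by $i\in[0,p-1]$ and $i\in[q,q+p-1]$. It does not use $p$ coefficients of $\Phi_{qr}$. Nothing you write shows that it collapses to a one-dimensional count of multiples $ic\bmod p$ in an interval of length about $h$.

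Your preferred choice $q\equiv1\pmod p$ is a warning sign. In the one family where the height can be computed (described below), $q\equiv1\pmod p$ forces $t=1$, i.e.\ $r\equiv1\pmod{pq}$. That case is flat by \refT{4}.

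The missing idea is a concrete family whose height can be evaluated exactly. The paper's construction is as follows.
\begin{enumerate}
\item Take $t\in[2,p-1]$ with $t\equiv\pm(h-1)^{-1}\pmod p$, choosing the sign so that $t\neq1$; for $h=2$ this means $t=p-1$.
\item Take primes $q\equiv t\pmod p$ with $q>p^2$, and then primes $r$ with $rt\equiv1\pmod{pq}$. Dirichlet's theorem supplies both, in the same order you suggest.
\item \refT{3}(ii)--(iv) then gives $A(pqr)=\us+1=h$.
\item The case $h=1$ is \refT{4}.
\end{enumerate}
The restriction $h\le(p+1)/2$ comes simply from $\us=\min(s,p-s)\le(p-1)/2$. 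It has nothing to do with residues fitting without wrap-around.

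Proving \refT{3} requires three ingredients:
\begin{enumerate}
\item[(a)] The criterion $\chi(n)=1\iff\la nt\ra_{pq}\le\lfloor n/r\rfloor$, which is where $rt\equiv1\pmod{pq}$ enters.
\item[(b)] The localization in Lemmas~\ref{L5}--\ref{L8}: an extremal coefficient must have a window $\CN_m$ containing exactly two multiples of $r$ and exactly two of $q$, with prescribed values of $\chi$. Here $q>p^2$ and $rt\equiv1$ are what exclude extra multiples of $q$.
\item[(c)] The exact evaluation of $S(Q,R;M)$ in the three configurations $l=l'$, $l'\in I_M$, and $l'\in I_{M+Q+R}$. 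This yields $\max[\min(x_R+1,x'_R,x'_Q),\min(x'_R+1,x_R,x_Q)]$.
\end{enumerate}
Steps (b) and (c) supply both the upper bound and the matching extremal coefficient, and nothing in your sketch replaces them.
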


We conclude this section with a quick ward about the equation \eqref{1} for $k\ge4$. Aside from explicitly calculating $A(n)$ for specific values of $n$, almost nothing is known. A particularly appealing old problem is to determine whether there are polynomials $\Phi_n$ of arbitrarily high order and height 1. Polynomials of height 1 are said to be flat. Despite extensive computations, no flat polynomials of order 5 or higher have ever been found. Kaplan \cite{Ka.4Flat} proved the following general result. Let $n$ be arbitrary and let primes $r$ and $R$ satisfy $R>r>n$ and $R\equiv r\pmod n$. Then $A(nR)=A(nr)$. Combining this result with computationally obtained evaluation $A(3\cdot5\cdot31\cdot929)=1$, he showed that there are infinitely many flat polynomials of the form $\Phi_{3\cdot5\cdot31\cdot R}$.

\section{Background and Results}
Modifying our earlier convention, we now let $p$, $q$ and $r$ denote three relatively prime in pairs positive integers $\ge3$ (so they need not be primes). Put $T=\{p,q,r\}$ and
\begin{equation}\label{3}
Q_T(x)=Q_{\{p,q,r\}}(x)\coloneqq
\frac{(x^{pqr}-1)(x^p-1)(x^q-1)(x^r-1)}{(x^{pq}-1)(x^{qr}-1)(x^{rp}-1)(x-1)}.
\end{equation}
Routine application of the inclusion-exclusion principle to the roots of the polynomials on the right side shows that $Q_T$ reduces to a polynomial, a \emph{ternary inclusion-exclusion polynomial}. We refer the reader to \cite{Ba.IEP} for an introduction to inclusion-exclusion polynomials. This class of polynomials may be thought of as a tool for studying coefficients of cyclotomic polynomials and other divisors of $x^n-1$, or as looking at cyclotomic polynomials from a combinatorial rather than algebraic point of view. In particular, when the parameters $p$, $q$ and $r$ are distinct odd primes, the polynomial $Q_{\{p,q,r\}}$ is better known as the cyclotomic polynomial $\cycp$.

Polynomial $Q_T$ is of degree
\begin{equation}\label{4}
\gf(T)=\gf(p,q,r)\coloneqq(p-1)(q-1)(r-1)
\end{equation}
and we write, as before,
\[
Q_T(x)=\sum_{0\le m\le\gf(T)}a_mx^m \qquad[a_m=a(m;T)].
\]
We also put
\[
\CA_T\coloneqq\{a_m\} \quad\text{and}\quad A(T)\coloneqq\max_m|a_m|,
\]
as well as
\[
A^+(T)\coloneqq\max_m a_m \quad\text{and}\quad A^-(T)\coloneqq\min_m a_m.
\]
Quantities $A^{\pm}(T)$ completely determine $\CA_T$ for it is known (see \cite{GM.ConsecCoeffs} and \cite{Ba.IEP}) that
\begin{equation}\label{5}
\CA_T=[A^-(T),A^+(T)]\cap\mathbb Z.
\end{equation}

We shall see that the following assumptions on triples $T$ allow us to evaluate $A^{\pm}(T)$ with a reasonable amount of effort. Let $p\ge3$ be an arbitrary fixed integer and let $t\in[1,p-1]$ be relatively prime to $p$. For each such pair of $p$ and $t$, let $q$, $r$ and $r'$ be any integers satisfying the conditions
\begin{equation}\label{6}
r,r'>q>p^2,\quad q\equiv t\pmod p,\quad rt\equiv1\pmod{pq},
\end{equation}
and
\begin{equation}\label{7}
r't\equiv-1\pmod{pq}.
\end{equation}
Note that if $t\ne1$, then any positive $n$ satisfying $nt\equiv\pm1\pmod{pq}$ is automatically $>q$, so our assumption $r,r'>q$ was only needed to preclude the possibility $r=t=1$ in \eqref{6}. Note also that the triples $T=\{p,q,r\}$ and $T'=\{p,q,r'\}$ are each relatively prime in pairs. Evaluation of $A^{\pm}$ for such triples depends on the parameters $\ut$ and $\us$ defined as follows. First, let $s\in[1,p-1]$ be the inverse of $t$ modulo $p$, so that $r\equiv s\pmod p$. Now put
\[
\ut\coloneqq\min(t,p-t) \quad\text{and}\quad \us\coloneq\min(s,p-s).
\]
We are now ready to state the result that underpins all the findings in our paper.

\begin{theorem}\label{T3}
Let $T=\{p,q,r\}$ and $T'=\{p,q,r'\}$ be any triples satisfying \eqref{6} and \eqref{7}. Then the sets $\CA_T$ and $\CA_{T'}$ satisfy the identity
\begin{equation}\label{8}
\CA_{T'}=-\CA_T.
\end{equation}
Therefore, by \eqref{5}, we only need to evaluate $A^\pm(T)$. These are as follows.
\begin{enumerate}[leftmargin=*, label=\emph{(\roman*)}]
\item If $t=s=1$, then
\[
A^-(T)=-1, \quad A^+(T)=1, \quad A(T)=1.
\]
\item If $t>1$ and $\us<\ut$, then
\[
A^-(T)=-\us-1, \quad A^+(T)=\us+1, \quad A(T)=\us+1.
\]
\item If $t>1$, $\us\ge\ut$ and $(\ut,\us)=(t,s)$ or $(p-t,p-s)$, then
\[
A^-(T)=-\us, \quad A^+(T)=\us+1, \quad A(T)=\us+1.
\]
\item If $t>1$ $\us\ge\ut$ and $(\ut,\us)=(t,p-s)$ or $(p-t,s)$, then
\[
A^-(T)=-\us-1, \quad A^+(T)=\us, \quad A(T)=\us+1.
\]
\end{enumerate}
\end{theorem}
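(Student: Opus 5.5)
We plan to use the standard description of ternary coefficients via the ``$\chi$-representation''. Write $n=pqr$ and let $f_q,f_r,f_{pq},\dots$ denote the relevant inverses. Each $0\le m<n$ is written uniquely as
\[
m\equiv x\,qr+y\,pr+z\,pq+\gd \pmod{n},
\]
with $0\le x<p$, $0\le y<q$, $0\le z<r$ and $\gd\in\{0,1\}$ (one of the two cases, $\gd=0$ or $1$, being the standard normalization). From \eqref{3}, expanding $(x^p-1)(x^q-1)(x^r-1)/((x^{pq}-1)(x^{qr}-1)(x^{rp}-1)(x-1))$ as a power series and truncating by $x^{pqr}-1$, one gets the known formula: $a_m$ equals the number of lattice points in a certain region minus the number in a shifted one. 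In practice (following Kaplan's lemma in \cite{Ka.Flat} and Bachman's treatment in \cite{Ba.IEP}) this becomes
\[
a_m=\sum_{i}\bigl(\chi_i^+(m)-\chi_i^-(m)\bigr),
\]
a count over $0\le i<p$ of pairs $(i,j)$ with $iq+jp$ lying in an interval determined by $m$ modulo $r$. The congruence assumptions \eqref{6} make this explicit. Since $r\equiv t^{-1}\pmod{pq}$, multiplication by $r$ permutes residues mod $pq$ in a controlled way. Moreover, $q\equiv t$ and $r\equiv s\pmod p$, with $q>p^2$, so each of the relevant intervals of length $\approx q$ contains points of every residue class mod $p$ in an affine pattern. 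The plan is to reduce $a_m$ to a signed count of integers $0\le i<p$ for which $it\bmod p$ and $is\bmod p$ fall into prescribed ranges, i.e. to a one-dimensional problem about the permutation $i\mapsto is\bmod p$ of $[0,p-1]$.

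For the symmetry \eqref{8}, note that $r'\equiv -r\pmod{pq}$. We will show that the coefficient data of $T'$ is obtained from that of $T$ by the substitution $z\mapsto r-1-z$ (equivalently $m\mapsto$ a reflected index), combined with the palindromic identity $a_m=a_{\gf(T)-m}$. Under this substitution the counting region for $T'$ is the complement-type reflection of that for $T$, so each coefficient of $Q_{T'}$ is the negative of some coefficient of $Q_T$, and conversely. Then \eqref{5} gives $\CA_{T'}=-\CA_T$.

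For the values of $A^\pm(T)$, the reduced count is as follows: $a_m$ is the number of $i$ in a window $[u,u+\ell)$ of length about $\us$ (or $\ut$) with a given property, minus the number with the opposite property. The upper bounds $|a_m|\le\us+1$ come from bounding such a window count: consecutive multiples of $s$ mod $p$ can stay on one side of a threshold at most $\us$ times, plus one boundary term. In case (i) this gives $1$ directly. Lower bounds are obtained by exhibiting explicit $m$, taking $i$ running through $0,1,\dots,\us$ with $is$ increasing (or decreasing) in $[0,p)$ without wrapping. The distinction between cases (iii) and (iv) is exactly the sign: whether $t$ and $s$ are both ``small'' or one is reflected determines whether the extra $+1$ boundary term can be achieved with the plus or the minus sign. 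Case (ii), $\us<\ut$, allows both signs because the window in $t$ is long enough to realize either orientation. So we will handle case (iii) directly and deduce case (iv) from (iii) via \eqref{8}, replacing $t$ by $p-t$ (which swaps $r$ and $r'$-type congruences).

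The main obstacle will be the exact reduction step: turning the lattice-point formula into the clean one-dimensional count with correct boundary terms. The off-by-one contributions are precisely what separate $\us$ from $\us+1$ and the $(A^-,A^+)$ patterns in (iii) and (iv), so the endpoint conventions must be tracked carefully, using $q>p^2$ to ensure that no wraparound occurs.
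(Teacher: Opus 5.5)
Your proposal has two concrete gaps.

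\textbf{Deducing (iv) from (iii) does not work.} The triple $T'$ in \eqref{8} has the same $q$, hence the same $t$, and it satisfies $r't\equiv-1\pmod{pq}$. So it is not of the form \eqref{6}, and parts (ii)--(iv) say nothing about it. You also cannot ``replace $t$ by $p-t$'' with $q$ fixed: $r'(p-t)\equiv1\pmod p$, but $r'(p-t)\equiv 1+r'p\not\equiv1\pmod q$. Even inside the family \eqref{6}, the map $t\mapsto p-t$ forces $s\mapsto p-s$. This preserves whether $t$ and $s$ lie on the same side of $p/2$, which is exactly what separates (iii) from (iv). So (iv) needs its own computation.

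In the paper, both (iii) and (iv) come out of one formula, \eqref{10.1}, which holds for every sign choice $(Q,R)=(\pm q,\pm r)$. One picks $(Q,R)$ so that $x_Q=\ut$ and reads off $A^+(T)=\min(x_R+1,x'_R)$ and $-A^-(T)=\min(x'_R+1,x_R)$, with $x_R\in\{s,p-s\}$. Separately, your substitution $z\mapsto r-1-z$ for \eqref{8} is not meaningful, since $r$ and $r'$ are different moduli. Identity \eqref{8} should simply be quoted from \refL{4}, using $r'\equiv-r\pmod{pq}$.

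\textbf{The core bound and its attainment are missing.} The one mechanism you offer points the wrong way. For $s=2$ the multiples $0,2,4,\dots$ stay below $p/2$ for about $p/4$ consecutive steps. So ``consecutive multiples of $s$ stay on one side of a threshold at most $\us$ times'' is false, and bounds of that type grow as $\us$ shrinks, whereas the true answer is $\us+1$.

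The missing idea is localization. By \refL{5} and \refL{6}, the first index $m$ attaining $A^\pm(T)$ must have $\CN_m$ containing exactly two multiples $l$ of $r$ and two multiples $l'$ of $q$. Moreover $\chi(l)=\chi(l')=1$, and $\chi$ vanishes at the partners $l\pm r$, $l'\pm q$ (this is \refL{7}). Excluding four multiples of $q$ uses $rt\equiv1\pmod{pq}$ together with $q>p^2$.

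One then evaluates $\chi$ exactly near these points, using $it\le(p-1)^2<q$ to rule out wraparound. Take the case $l=l'=aqr$. Then $\chi=1$ exactly on a block of length $a+1$ ending at $l$, and on a block of length $a$ ending at $l+R-x_R-1$. The requirements $\chi(l+R)=\chi(l+Q)=0$ force $a<x'_R$ and $a<x'_Q$. So $s$ enters as the offset $x_R$ between two runs of $1$'s. The parameter $t$ enters only through constraints involving $x_Q\in\{t,p-t\}$. This includes the configuration $l'\in I_{M+Q+R}$, which contributes $\min(x'_R+1,x_R,x_Q)$.

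Those $t$-constraints are precisely what separate (ii) from (iii)/(iv), and your sketch gives no account of how $t$ affects the extremal values. The remaining configuration $l'\in I_M$, $l'\neq l$ must also be analysed; it contributes at most $0$.
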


\begin{example}\label{Ex1}
Recall that cyclotomic polynomials of order $<3$ are all flat. The first non-flat cyclotomic polynomial is $\Phi_{105}$, and it is a classic example that its coefficients form a set $\CA_{\{3,5,7\}}=\{-2,-1,0,1\}$. It is interesting to note that \refT{3} gives a theoretic evaluation of $\CA_{\{3,5,7\}}$. To see this we take
\[
p=3,\quad t=2,\quad s=2,\quad q=5,\quad r'=7,
\]
and ingnore the requirement $q>p^2$. Then
\[
(\ut,\us)=(1,1)=(p-t,p-s)
\]
and \refT{3}(iii) gives
\[
A^-(3,5,r)=-1\quad\text{and}\quad A^+(3,5,r)=2,
\]
from which the conclusion follows by \eqref{8} and \eqref{5}.
\end{example}

This is a good place to make a brief comment on our requirement $q>p^2$ in \refT{3}. This condition will be seen to yield technical simplifications in our proof of the theorem. We did not explore the extent to which this requirement can be relaxed, and it is entirely possible that simply having $q>p$ is not sufficient for our conclusions. But it happens to be sufficient in this example.

Part (i) of \refT{3} is not new and in certain important ways is different from the other parts, as we now explain. In \cite{Ba.Flat}, the author gives the result
\[
q\equiv-1\pmod p \quad\text{and}\quad r\equiv1\pmod{pq} \implies A(p,q,r)=1.
\]
But in fact, the method of that paper reaches the same conclusion under any of the four assumptions $q\equiv\pm1\pmod p$ and $r\equiv\pm1\pmod{pq}$ (containing \refT{3}(i)). Then Kaplan \cite{Ka.Flat} showed that the condition $r\equiv\pm1\pmod{pq}$ alone is sufficient for this.

\begin{theorem}\label{T4}
$A(p,q,r)=1$, if $r\equiv\pm1\pmod{pq}$.
\end{theorem}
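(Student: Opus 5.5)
We prove Kaplan's theorem, \refT{4}, through the standard formula for ternary coefficients in terms of inclusion--exclusion counts. Since $T=\{p,q,r\}$ is pairwise coprime, write
\[
Q_T(x)=\Phi_{pq}(x)\cdot\frac{(x^{pqr}-1)(x^r-1)}{(x^{pr}-1)(x^{qr}-1)}
\]
(interpreting $\Phi_{pq}$ as $Q_{\{p,q\}}=(x^{pq}-1)(x-1)/((x^p-1)(x^q-1))$). Expanding modulo $x^{\gf(T)+1}$, the second factor becomes $(1-x^r)\sum_{i,j\ge0}x^{(ip+jq)r}$ truncated. Thus
\[
a(m;T)=\sum_{k\ge0}\chi(k)\bigl(b(m-kr)-b(m-kr-r)\bigr),
\]
where $b(n)$ are the coefficients of $\Phi_{pq}$ and $\chi(k)$ counts representations $k=ip+jq$ with $i,j\ge0$. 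For $k<pq$ we have $\chi(k)\in\{0,1\}$, and only $k\le \gf(T)/r<pq$ matter, so $\chi$ is just an indicator.

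Next we use the well-known description of $\Phi_{pq}$: $b(n)=1$ if $n=ip+jq$, $b(n)=-1$ if $n+pq=ip+jq+p+q$ (with $i,j\ge0$ in the appropriate ranges), and $b(n)=0$ otherwise. Equivalently, $b(n)=\chi(n)-\chi(n-1)$ reduced appropriately. This makes $a(m;T)$ a signed count of lattice points.

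The key use of $r\equiv\pm1\pmod{pq}$ is this. Write $m=m_0+\ell r$ with $0\le m_0<r$. The shift $m\mapsto m-kr$ changes the residue mod $pq$ by $\mp k$. So, modulo $pq$, the values $b(m-kr)$ depend only on $m_0\mp k$, a consecutive sequence in $k$. The relevant sum therefore becomes
\[
\sum_k \chi(k)\,\bigl(\beta(c\mp k)-\beta(c\mp k\mp1)\bigr)
\]
for a suitable periodic extension $\beta$, restricted to $k$ with $m-kr$ in range. Summing by parts in $k$ turns this into
\[
\sum_k \bigl(\chi(k)-\chi(k-1)\bigr)\beta(c\mp k)
\]
plus boundary terms. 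Here $\chi(k)-\chi(k-1)$ is itself (up to sign) a coefficient of $\Phi_{pq}$ on $[0,pq)$, so the sum is a correlation of two $\{-1,0,1\}$ sequences, each of which alternates in sign along its nonzero entries (the $+1$'s and $-1$'s of $\Phi_{pq}$ interlace). We would show that, restricted to a window of consecutive $k$, such an alternating-times-alternating sum has partial sums in $\{-1,0,1\}$. The argument is that the product of two interlacing sign sequences, evaluated along an interval, telescopes into at most one unmatched term.

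We expect the main obstacle to be this last step. One must control the truncation window for $k$ (given by $0\le m-kr$ and $m-kr-r\le \gf(p,q)$), and verify the interlacing precisely for $\beta$ read backwards when $r\equiv-1$. The first approach to try is to encode both sequences via the $(i,j)$ lattice of $\Phi_{pq}$ and reduce to counting points of a rectangle-like region cut by a line, showing the count differs by at most one between the positive and negative parts. Finally, $A(T)\ge1$ is trivial since $a_0=1$, so the bound $|a_m|\le1$ gives $A(p,q,r)=1$.
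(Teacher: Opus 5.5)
\textbf{There is a genuine gap, and it begins in the first line.} The factor you multiply by is $\frac{(x^{pqr}-1)(x^r-1)}{(x^{pr}-1)(x^{qr}-1)}=Q_{\{p,q\}}(x^r)$. So your identity asserts $Q_T(x)=Q_{\{p,q\}}(x)\,Q_{\{p,q\}}(x^r)$. Comparing with \eqref{3}, the right-hand side is actually $Q_T\cdot Q_{\{p,q\}}(x)^2$; it has degree $(p-1)(q-1)(r+1)$, not $\gf(T)$. The correct identity is $Q_T(x)=Q_{\{p,q\}}(x^r)/Q_{\{p,q\}}(x)$, i.e.\ $\Phi_{pqr}(x)=\Phi_{pq}(x^r)/\Phi_{pq}(x)$.

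A sanity check shows the product version cannot be right. Once $r>(p-1)(q-1)$, the exponents in $\Phi_{pq}(x)\Phi_{pq}(x^r)$ never collide, so that product is flat for every large $r$, and the hypothesis $r\equiv\pm1\pmod{pq}$ is never used. Yet by \refEx{1} and \eqref{4.11}, $A(3,5,7+15k)=2$ for all $k\ge0$.

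The same confusion underlies your ``suitable periodic extension $\beta$.'' The coefficients $b(n)$ of $\Phi_{pq}$ form a finite sequence, so $b(m-kr)$ is not a function of $m-kr$ modulo $pq$. What is genuinely periodic modulo $pq$ is the power series $1/\Phi_{pq}(x)=\frac{(1-x^p)(1-x^q)}{(1-x)(1-x^{pq})}$. Its $n$th coefficient $c(n)$ is $1$ if $n\bmod pq\in[0,p)$, $-1$ if $n\bmod pq\in[q,q+p)$, and $0$ otherwise. Only through this factor does $r\equiv\pm1\pmod{pq}$ enter: one gets $a_k=\sum_{0\le j\le K}b(j)\,c(k-jr)=\sum_{0\le j\le K}b(j)\,c(k\mp j)$ with $K=\lfloor k/r\rfloor$. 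This is where Kaplan's proof starts, and \eqref{4.12} lets you assume $r\equiv1\pmod{pq}$.

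\textbf{Even after this repair, the decisive step is only asserted, and the principle you offer for it is false.} Two sign sequences whose nonzero entries alternate can be in phase along a window, and then every product is $+1$. This happens even with your own sequences: $b(0)b(1)+b(1)b(0)=-2$, which is the coefficient of $x$ in $\Phi_{pq}(x)^2$. So ``alternating times alternating'' does not telescope to at most one unmatched term.

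In the corrected formula, one factor is not a sparse alternating sequence but a block pattern. Hence $a_k=\Sigma_+-\Sigma_-$, where $\Sigma_\pm$ are sums of $b(j)$ over two windows of $p$ consecutive residues, each cut off at $K$; the two windows are translates of each other by $q$. Alternation of the nonzero coefficients of $\Phi_{pq}$ only yields $\Sigma_\pm\in\{-1,0,1\}$, hence $|a_k|\le2$. The entire content of the theorem is ruling out $\pm2$.

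That requires three ingredients, none of which is in the proposal:
\begin{enumerate}
\item the sharper identity $\sum_{j\le v}b(j)=\chi(v)$ for $0\le v<pq$ (your $\chi$);
\item the exact relation between the two windows;
\item a case analysis of how the cutoff $K$ meets them.
\end{enumerate}
For comparison, the paper does not reprove \refT{4}. It cites Kaplan's argument, which runs along the corrected lines above, and the Bachman--Moree reduction of $A(p,q,kpq\pm r_0)$ to $A(p,q,r_0)$.
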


Later, Bachman and Moree \cite{BM.IEP} studied heights of polynomials $Q_{\{p,q,r\}}$, where $p<q<r$ and $r$ is of the form $r=kpq\pm r_0$, $1\le r_0<p$ (prime to $pq$). The method developed in that work reduces the height $A(p,q,r)$ to that of $A(p,q,r_0)$ and, in particular, gives a new proof of \refT{4}. And the point that we are making now is that:
\begin{enumerate}[leftmargin=*]
\item[a)] The subclass of ternary inclusion-exclusion polynomials studied in that paper and the subclass analysed in \refT{3} intersect precisely when $r_0=1$ and $t=s=1$; and
\item[b)] The method of Bachman and Moree is really the preferred way of looking at these polynomials.
\end{enumerate}
At any rate, one readily verifies that \refT{3}(i) is contained in \refT{4} and we choose not to include here its new proof which follows the method developed here.

Strictly speaking, the forgoing discussion of earlier work related to \refT{4} is somewhat inaccurate and requires clarification. Kaplan's proof of \refT{4} as well as related work of Bachman preceding it, were carried out only in the context of cyclotomic polynomials. Their arguments, however, readily carry over to and remain valid for the entire class of inclusion-exclusion polynomials. In the subsequent discussion, we will encounter a number of other situations which are analogous to the situation we just described. To avoid repetition, we will no longer mention this point and simply state a result as valid for the inclusion-exclusion polynomials if we know it to be so. 

Next, we record all the cases of the equation $A(p,q,r)=h$ that can be solved using \refT{3}.

\begin{corollary}\label{C5}
Fix $p\ge3$. The equation $A(p,q,r)=h$ certainly has solutions in arbitrarily large $q$ and $r$, which may be taken to be prime, for $h=1$ and $h$ satisfying $\gcd(h-1,p)=1$ in the range:
\begin{itemize}
\item $2\le h\le(p+1)/2$, if $p$ is odd, and
\item $h=2k$, $1\le k\le l$, if $p=4l$ or $4l+2$.
\end{itemize}
\end{corollary}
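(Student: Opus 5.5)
The plan is to read everything off \refT{3}. The key observation is that whenever $t>1$, each of cases (ii), (iii) and (iv) gives the same height, $A(T)=\us+1$. So it suffices to produce admissible triples \eqref{6} with $t>1$ and a prescribed value of $\us$, and then to use Dirichlet's theorem to make $q$ and $r$ prime and arbitrarily large.

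\emph{The case $h=1$.} Take $t=1$, so that $s=1$. Choose a prime $q>p^2$ with $q\equiv1\pmod p$, and then a prime $r>q$ with $r\equiv1\pmod{pq}$. Both exist in arbitrarily large size by Dirichlet's theorem, since the residues are coprime to the moduli. Then \refT{3}(i), or equally \refT{4}, gives $A(p,q,r)=1$.

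\emph{The case $h\ge2$.} Put $u=h-1\ge1$. We want $s\in[1,p-1]$ with $\gcd(s,p)=1$, $\min(s,p-s)=u$, and $s\ne1$ (note $t=1$ exactly when $s=1$). I will take $s=p-u$.
\begin{itemize}
\item Then $\gcd(s,p)=\gcd(u,p)=\gcd(h-1,p)=1$ by hypothesis.
\item We have $\us=\min(p-u,u)=u$ provided $u\le p/2$.
\item We have $s\ne1$ because $u\ne p-1$. This holds since $u\le p/2<p-1$ for $p\ge3$; for $p=3$ the range forces $h=2$, $u=1$, $s=2$.
\end{itemize}
Let $t\in[1,p-1]$ be the inverse of $s$ modulo $p$, so $t>1$. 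Choose a prime $q>p^2$ with $q\equiv t\pmod p$. Since $\gcd(t,pq)=1$, we can then choose arbitrarily large primes $r$ with $r\equiv t^{-1}\pmod{pq}$, so that $rt\equiv1\pmod{pq}$. Thus \eqref{6} holds. Whichever of (ii)--(iv) of \refT{3} applies, we get $A(p,q,r)=\us+1=h$.

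\emph{The range conditions.} It remains to check that the stated ranges of $h$ satisfy $u\le p/2$ and $\gcd(u,p)=1$.
\begin{itemize}
\item If $p$ is odd and $2\le h\le(p+1)/2$, then $u\le(p-1)/2<p/2$, and $\gcd(u,p)=1$ is exactly the hypothesis.
\item If $p=4l$ or $4l+2$ and $h=2k$ with $1\le k\le l$, then $u=2k-1\le 2l-1<p/2$. Coprimality of $u$ with $p$ is again the hypothesis $\gcd(h-1,p)=1$.
\end{itemize}

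I expect no real obstacle: the content lies entirely in \refT{3}. The only points needing care are avoiding $t=1$ when $h=2$, which the choice $s=p-u$ (rather than $s=u$) handles uniformly, and checking the coprimality conditions needed to invoke Dirichlet's theorem for $q$ and for $r$.
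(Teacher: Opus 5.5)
Your proof is correct and follows the paper's route: for $h=1$ use \refT{4} (or \refT{3}(i)); for $h\ge2$ read the height $\us+1$ off \refT{3} with $t>1$; and use Dirichlet's theorem to make $q$ and $r$ prime and arbitrarily large. The one difference is in your favor. The paper chooses $t(h-1)\equiv1\pmod p$, i.e.\ $s=h-1$, and for $h=2$ this gives $t=s=1$, which lands in \refT{3}(i) with height $1$. Your choice $s=p-(h-1)$ gives $t=s=p-1$ there, which is case (iii) with height $2$ as in \refEx{2}(c), so it handles $h=2$ uniformly with the other values.
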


For an odd prime $p$, \refC{5} implies the solubility of $A(p,q,r)=h$ in arbitrarily large primes $q$ and $r$, for all $1\le h\le(p+1)/2$. This principal special case of \refC{5} was stated as \refT{2} in the introduction.

The diameter of a polynomial $Q_T$ is defined to be
\[
D(T)=D(p,q,r)\coloneqq A^+(p,q,r)-A^-(p,q,r).
\]
It is known that
\begin{equation}\label{9}
2\le D(T)\le p\qquad[p=\min(p,q,r)].
\end{equation}
The lower bound  is trivial for it is very easy to see that every $Q_T$ has positive and negative coefficients. $D(T)=2$ if $Q_T$ is flat. The upper bound was given in \cite[Corollary 3]{Ba.Start}. This bound is also sharp and examples of cyclotomic polynomials satisfying $D(p,q,r)=p$ were given in \cite{Ba.OLSC} and then by Moree and Ro\c{s}u in \cite{MR}. It is interesting to point out that in both of these papers the evaluation of heights and diameters were indirect and ran as follows. For a fixed prime $p$, it was shown how to choose (large) primes $q$ and $r$ such that it was possible to identify a specific ``small''coefficient with value $-\frac{p-1}2+k$ and a specific ``large'' coefficient with value $\frac{p+1}2+k$. In \cite{Ba.OLSC} this was done only for $k=0$, and in \cite{MR} this was done for every $k$ in the range
\[
0\le k\le\frac14(\sqrt{4p-11}-1).
\]
Then the authors observed that the difference of the large and small coefficients was $=p$. It automatically followed, by
\eqref{9}, that the height must be $\frac{p+1}2+k$ and the perimeter $p$. Notice also that this result of Moree and Ro\c{s}u shows that $M(p)>\frac{p+1}2$ for all primes $p\ge11$.

Findings of \refT{3} yield new results for the diameter problem. For the sake of simplicity, let us eliminate the requirement $\gcd(h-1,p)=1$ in \refC{5} by assuming that $p$ is an odd prime. In fact, we will state our diameter results in the language of cyclotomic polynomials $\cycp$ and write $D(pqr)$ in place of $D(p,q,r)$. Thus, we are now concerned with solving the equation
\begin{equation}\label{10}
D(pqr)=d \qquad[2\le d\le p,\ r>q>p].
\end{equation}
\begin{example}\label{Ex2}
Let us begin with four examples, starting with two evaluations of $D(pqr)$ that were previously known.
\begin{enumerate}[leftmargin=*]
\item[a)] \refT{3}(i) solves \eqref{10} with $d=2$, for each (prime) $p$.
\item[b)] \refT{3}(iv) with $t=2$ and $s=\frac{p+1}2$, so that $(\ut,\us)=(2,\frac{p-1}2)$, solves \eqref{10} with $d=p$, for each $p$.
\item[c)] \refT{3}(iii) with $t=s=p-1$, so that $\ut=\us=1$, solves \eqref{10} with $d=3$, for each $p$.
\item[d)] It is not possible to use \refT{3} to solve \eqref{10} with $d=4$.
\end{enumerate}
\end{example}
We should also remark that the existence of arbitrarily large primes $q$ and $r$ satisfying the required conditions \eqref{6} is guaranteed by the Dirichlet's theorem for primes in arithmetic progressions.

With $p$ fixed, $D(pqr)$ takes on at most $p-1$ possible values, by \eqref{9}. \refT{3} is sufficient to show that $D(pqr)$ can assume at least half of these values.

\begin{corollary}\label{C6}
With $p$ fixed, the equation $D(pqr)=d$ is soluble in arbitrary large primes $q$ and $r$ for at least $(p+1)/2$ different values of $2\le d\le p$.
\end{corollary}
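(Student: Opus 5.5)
The plan is to read the diameter off \refT{3} and pick one admissible pair $(t,s)$ for each possible value of $\us$. By the definition $D(T)=A^+(T)-A^-(T)$, \refT{3} gives
\[
D(T)=2 \ \text{in case (i)},\qquad D(T)=2\us+2 \ \text{in case (ii)},\qquad D(T)=2\us+1 \ \text{in cases (iii) and (iv)}.
\]
So it suffices to exhibit, for a fixed odd prime $p$, admissible pairs $(t,s)$ producing $(p+1)/2$ distinct values of $D$. We also need primes $q,r$ satisfying \eqref{6} that can be taken arbitrarily large.

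For each $u$ with $1\le u\le (p-1)/2$, we choose $s\in\{u,p-u\}$, so that $\us=u$. We then let $t\in[1,p-1]$ be the inverse of $s$ modulo $p$. We want $t>1$, so that case (i) is excluded. If $u=1$ and $s=1$ would force $t=1$, we take $s=p-1$ instead; then $t=p-1$, as in \refEx{2}(c). With this pair in hand there are two possibilities.
\begin{itemize}
\item If $\ut\le u=\us$, then one of cases (iii) or (iv) applies, and $D=2u+1$.
\item If $\ut>u$, then case (ii) applies, and $D=2u+2$.
\end{itemize}
In either case the value obtained lies in $\{2u+1,2u+2\}$. These two-element sets are pairwise disjoint for distinct $u$, and all of them lie in $[3,\infty)$. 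Case (i), with $t=s=1$, contributes the additional value $D=2$. This gives $(p-1)/2+1=(p+1)/2$ distinct values of $d$. As a consistency check, when $u=(p-1)/2$ the option $2u+2=p+1$ is excluded by \eqref{9}, so that case cannot actually occur.

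It remains to realize each pair $(t,s)$ by primes, which is the only step needing care. By Dirichlet's theorem, we choose a prime $q>p^2$ with $q\equiv t\pmod p$, and such $q$ can be taken arbitrarily large. Since $q$ is a prime exceeding $t$ and $p$ is a prime exceeding $t$, the residue $t$ is invertible modulo $pq$. We then choose a prime $r>q$ with $r\equiv t^{-1}\pmod{pq}$, again by Dirichlet's theorem. This $r$ automatically satisfies $r\equiv s\pmod p$, so \eqref{6} holds and \refT{3} applies to $T=\{p,q,r\}$. The resulting polynomial is the cyclotomic polynomial $\cycp$, which finishes the argument. The main point to verify carefully is the case bookkeeping in \refT{3}: the hypothesis $t>1$ must hold, and the condition $\us\ge\ut$ versus $\us<\ut$ must be placed in the right case. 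Beyond that, the argument is simply counting.
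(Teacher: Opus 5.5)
Your proof is correct. Your construction also coincides with the paper's. For $u\ge 2$, the $t$ you choose satisfies $ut\equiv\pm1\pmod p$, so $\ut$ is the partner of $u$ in the paper's pairing (\ref{3.2}), or $\ut=u$ when $u^2\equiv-1\pmod p$. Together with $t=1$ and $t=p-1$, you realize the same diameters the paper does.

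The difference lies entirely in the counting. The paper partitions $\{2,\dots,\frac{p-1}2\}$, with the square root of $-1$ removed when $p\equiv1\pmod 4$, into pairs $\{a,b\}$ with $ab\equiv\pm1\pmod p$. It records that each pair produces the two specific diameters $2a+2$ and $2b+1$, and then counts separately for $p\equiv\pm1\pmod 4$.

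You sidestep all of this by observing that whichever case of \refT{3} applies, the diameter lies in the window $\{2\us+1,2\us+2\}$. These windows are disjoint for distinct $\us$, so one admissible $t$ per value of $\us$ gives $(p-1)/2$ distinct values, plus $D=2$ from case (i). You never need to know which element of a window is attained, nor to treat $p\equiv1\pmod 4$ separately.

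Your route is shorter for the count itself. The paper's bookkeeping pays off in what follows: it yields the explicit list of attained diameters, which is exactly what the proofs of \refC{7} and \refC{8} reuse. For example, those proofs use that $t=b$ gives $2a+2$ whenever $a<\sqrt p-1$, and that $2b+1$ is attained whenever $b$ has a partner $a\le b$.
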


We saw that \refT{3} yields no information about polynomials with diameter 4, but shows that $D(pqr)=2,3,p$ are always possible, for a fixed $p$. In fact, we can do a little better than this.

\begin{corollary}\label{C7}
Fix $p$. There are arbitrarily large $q$ and $r$ such that $D(pqr)=2$, and $D(pqr)=2h$, for each $3\le h<\sqrt p$. Moreover, there are arbitrarily large $q$ and $r$ such that $D(pqr)=p$, and $D(pqr)=p-2k$, for each $1\le k<\sqrt p/2-1$.
\end{corollary}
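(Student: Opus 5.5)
The plan is to read off the diameter from \refT{3} in each case and then choose $t$ (equivalently $s$) so that the resulting value of $D$ is the one we want. As in the discussion preceding the corollary, we take $p$ to be an odd prime, so every $t\in[1,p-1]$ is invertible modulo $p$. By \refT{3}, $D(T)=A^+(T)-A^-(T)$ takes the following values.
\begin{itemize}
\item In case (i), $D=2$.
\item In case (ii), $D=2\us+2$.
\item In cases (iii) and (iv), $D=2\us+1$.
\end{itemize}
Once $t$ is fixed, Dirichlet's theorem gives arbitrarily large primes $q\equiv t\pmod p$ with $q>p^2$, and then arbitrarily large primes $r\equiv t^{-1}\pmod{pq}$. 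So it only remains to exhibit a suitable $t$ for each $d$. The values $D=2$ and $D=p$ are already supplied by \refEx{2} a) and b).

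For $D=2h$ with $3\le h<\sqrt p$ we use case (ii), which needs $t>1$ and $\us=h-1<\ut$. We take $s=h-1$ and $t\equiv (h-1)^{-1}\pmod p$.
\begin{itemize}
\item Since $h\ge3$ we have $s\ne1$, so $t\ne1$.
\item Since $h-1<(p-1)/2$, we get $\us=h-1$.
\item Suppose $\ut\le h-1$. Then $t\equiv \pm u$ with $1\le u\le h-1$, so $\pm u(h-1)\equiv1\pmod p$. But $u(h-1)\le (h-1)^2<p$, so this congruence is an equality of integers, forcing $u(h-1)=1$ and hence $h=2$, a contradiction.
\end{itemize}
Therefore $\ut>\us$, and case (ii) gives $D=2h$.

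For $D=p-2k$ with $1\le k<\sqrt p/2-1$ we use cases (iii) and (iv), which need $t>1$ and $\us\ge\ut$. We take $s=(p-1)/2-k$, so $\us=s$, $2s\equiv-(2k+1)$ and $t\equiv -2(2k+1)^{-1}\pmod p$.
\begin{itemize}
\item We have $t\ne1$, because $s\ne1$: indeed $s=1$ would give $p=2k+3$, which the bound on $k$ excludes.
\item Suppose $\ut>\us=(p-1)/2-k$. Then $2t\equiv\pm(2j+1)\pmod p$ for some $0\le j<k$.
\item Substituting $t\equiv -2(2k+1)^{-1}$ gives $(2k+1)(2j+1)\equiv\mp4\pmod p$.
\item The bound on $k$ gives $2k+1<\sqrt p-1$, so $(2k+1)(2j+1)+4<(\sqrt p-1)^2+4<p$ for $p$ not tiny. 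Hence the congruence is an equality of integers, which is impossible because the left side is odd.
\end{itemize}
Therefore $\us\ge\ut$. Whichever of (iii) or (iv) applies, $D=2\us+1=p-2k$.

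The main obstacle is the inequality check $\ut$ versus $\us$ in each family. It reduces to showing that a product of two small integers cannot be congruent to $\pm1$ or $\pm4$ modulo $p$ without being equal to it, and this is exactly what the $\sqrt p$ bounds are for. One must also double-check the boundary cases $t=p-1$ and very small $p$; in the latter the range of $h$ or $k$ is empty and nothing needs proving.
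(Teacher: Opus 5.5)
Your proposal is correct and is essentially the paper's argument. The paper phrases it through the pairs $\{a,b\}$ with $ab\equiv\pm1\pmod p$ from the proof of \refC{6}, taking $t=b$ for the even diameters and $t=a$ for the odd ones, and relies on the same two reductions, $ab\equiv\pm1\Rightarrow ab\ge p-1$ and $(2l+1)(2k+1)\equiv\pm4\Rightarrow(2l+1)(2k+1)\ge p-4$.

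One small slip is in the even case. The bound $u(h-1)<p$ does not by itself rule out $u(h-1)\equiv-1\pmod p$, that is, $u(h-1)=p-1$. What you need is $(h-1)^2<(\sqrt p-1)^2<p-1$, which follows at once from $h<\sqrt p$.
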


So \refC{7} shows that every even number $\ge6$ is a diameter of some $\cycp$ with $p$ sufficiently large. We do not have an analogue of this for odd numbers $\ge5$. But at least we can assert the following weaker conclusion.
\begin{corollary}\label{C8}
For each odd number $d\ge3$, the equation $D(pqr)=d$ is soluble for at least one prime $p$.
\end{corollary}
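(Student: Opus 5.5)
The plan is to read off from \refT{3} which diameters are odd, and then reduce \refC{8} to an elementary number-theoretic existence statement. In cases (iii) and (iv) of \refT{3} we get $A^+(T)-A^-(T)=2\us+1$. Case (i) gives $2$ and case (ii) gives $2\us+2$, both even. So an odd diameter $d=2u+1$ arises exactly when we find an odd prime $p$ and $t\in[2,p-1]$ with inverse $s$ modulo $p$ such that $\us=u$ and $\ut\le\us$. Whichever of (iii) or (iv) then applies, the diameter is $2u+1$. After that, Dirichlet's theorem supplies arbitrarily large primes $q\equiv t\pmod p$ with $q>p^2$ and primes $r$ with $rt\equiv1\pmod{pq}$, as noted after \refEx{2}, and then $D(pqr)=d$.

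For $u=1$, that is $d=3$, \refEx{2}(c) already suffices. For $u\ge2$ I would set $s\equiv\pm u\pmod p$ and $t=k$ with $2\le k\le u$. Then $ts\equiv1$ means $p\mid uk\mp1$. To get $\us=u$ we need $p>2u$, and then $\ut=k\le u=\us$ holds automatically. So it suffices to prove the following claim: for every $u\ge2$, some number among $uk\pm1$ with $2\le k\le u$ has a prime factor $p>2u$. Such a $p$ is odd and does not divide $u$. Hence $t=k$ is invertible modulo $p$, and its inverse is $s\equiv\pm u$.

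The main obstacle is this claim. I would prove it by comparing sizes. Let
\[
N=\prod_{2\le k\le u}(uk-1)(uk+1),
\]
so that $\log N=4u\log u+O(u)$. Suppose every prime factor of $N$ is at most $2u$. Fix a prime power $p^j$ with $p\le 2u$ and $p\nmid u$; note $p^j\le u^2+1$ whenever it divides a factor. The number of $k\le u$ with $p^j\mid uk\pm1$ is at most $2(u/p^j+1)$. Summing over prime powers, $\log N$ is at most
\[
\sum_{p\le 2u}\sum_{p^j\le u^2+1}2\Bigl(\frac u{p^j}+1\Bigr)\log p .
\]
Mertens' estimate bounds the $u/p^j$ part by $2u\log(2u)+O(u)$. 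The ``$+1$'' part contributes about $2\pi(2u)\log(u^2+1)$, which is $O(u)$ by Chebyshev's bound. So $\log N\le 2u\log u+O(u)$, which contradicts $4u\log u+O(u)$ once $u$ is large.

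For the finitely many remaining small $u$, I would check directly that a suitable $k$ exists. For instance, $u=7$, $k=6$ gives $43$. An explicit version of Chebyshev's bounds would make the threshold concrete.
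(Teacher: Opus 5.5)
Your proof is correct. Its reduction matches the paper's: odd diameters arise only from \refT{3}(iii),(iv), so everything comes down to finding a prime $p>2u$ and some $2\le k\le u$ with $ku\equiv\pm1\pmod p$. The difference is in how you obtain that existence statement.

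The paper handles prime $d$ separately through \refEx{2}(b). For composite $d=2b+1$ it quotes the Laishram--Shorey bound $P\bigl(\prod_{i=1}^n(1+in)\bigr)>2n+1$ for $n\ge4$ (\refL{1}). That bound uses only the sign $ab\equiv-1$ and is already explicit for every $n\ge4$.

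You instead prove a two-sign version from scratch. You compare $\log N=4u\log u+O(u)$ with the bound $2u\log u+O(u)$ that Mertens and Chebyshev give for a product of this shape all of whose prime factors are at most $2u$. This covers prime $d$ at no extra cost, since $k=2$ produces the factor $2u+1$.

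What your route buys is self-containment using only elementary prime-counting estimates. What it costs is effectiveness: as written it holds only for $u\ge u_0$ with $u_0$ unspecified. To be a complete proof it needs explicit constants, such as Rosser--Schoenfeld bounds for $\sum_{p\le x}\log p/p$ and for $\pi(x)$. It then needs an actual check of all $u<u_0$, a range that is finite but best handled by machine. The paper avoids that step entirely by citing a result valid from $n=4$ onward.

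Incidentally, the second sign is not essential. With $uk+1$ alone the product has logarithm about $2u\log u$, against a smooth bound of about $u\log u$, so the same comparison still goes through.
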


We give proofs of all the corollaries first and collect them in the next section. The rest of the paper gives a proof of \refT{3}, which is elementary but rather lengthy.

\section{Proofs of Corollaries}
\subsection{Proof of \refC{5}}
\refT{4} takes care of $h=1$. If $h-1<p/2$ and $(h-1,p)=1$, let $1\le t<p$ be such that $t(h-1)\equiv1\pmod p$. Now take $q$ and $r$ satisfying the condition \eqref{6}. Then $\us=s=h-1$ and, by \refT{3}, $A(p,q,r)=h$. Note that, by the Dirichlet's theorem for primes in arithmetic progressions, $q$ and $r$ may be taken to be prime and arbitrarily large.

It remains to verify the upper bounds in the range of $h$. Observe that if $p$ is odd, then $(\frac{p-1}2,p)=(p-1,p)=1$ and we may take $h-1$ above as large as $\frac{p-1}2$. If $p$ is even, say $p=4l$ or $p=4l+2$, then $(2l-1,p)=1$ and we may take $h-1$ as large as $2l-1$ in this case.

\subsection{Proof of \refC{6}}
We begin by considering special cases of \refT{3}. The first special case corresponds to the pair $\ut=\us=1$. This pair corresponds to the two values of $t$, 1 and $p-1$, yielding the diameters 2 and 3, as discussed in \refEx{2}(a,c). The other special case occurs only for primes $p\equiv1\pmod 4$ and corresponds to the fact that $-1$ is a quadratic residue for such primes. So let $u$ be the unique integer in $[2,\frac{p-1}2]$ such that $u^2\equiv -1\pmod p$. Taking $t=u$ or $t=p-u$ results in the pair $\ut=\us=u$ and, by \refT{3}(iii,iv), either of these choices of $t$ yields the diameter $2u+1$.

In the remaining cases $\ut\neq\us$ and \refT{3} yields the evaluation
\begin{equation}\label{3.1}
D(pqr)=\begin{cases} 2\us+2, &\text{if } \us<\ut \\
2\us+1, &\text{if } \us>\ut,  \end{cases}
\end{equation}
for primes $q$ and $r$ satisfying \eqref{6}. Now, for each $a$ in the range $2\le a\le\frac{p-1}2$ there corresponds a unique $b$, $2\le b\le\frac{p-1}2$, such that $ab\equiv\pm1\pmod p$. It follows that the set of integers $n\neq u$ in the range $2\le n\le\frac{p-1}2$ partitions uniquely into pairs
\begin{equation}\label{3.2}
\Bigl\{\, a,b \mid 2\le a<b\le\frac{p-1}2 \text{ and } ab\equiv\pm1\pmod p \,\Bigr\}.
\end{equation}
For each such pair $\{a,b\}$, taking $t=b$, so that $\ut=b$ and $\us=a$, and taking $t=a$, so that $\ut=a$ and $\us=b$, yields two different values of $D$ in \eqref{3.1}, namely $2a+2$ and $2b+1$. Plainly, these values do not repeat for different choices of $\{a,b\}$.

The proof is completed by counting the distinct values of $D$ generated above. One readily verifies that in both cases $p\equiv\pm1\pmod 4$ the count yields the value $\frac{p+1}2$.

\subsection{Proof of \refC{7}}
We address the even diameters first. Diameters 2 and 4 have already been discussed in \refEx{2}(a,d). For larger even diameters, we refer to the proof of \refC{6} and consider pairs $\{a,b\}$ given in \eqref{3.2}. We have seen that taking $t=b$ yields the diameter $2(a+1)$. The first claim now follows on observing that, for $2\le a<\sqrt p-1$, we have
\[  ab\equiv\pm1\pmod p \implies ab\ge p-1 \implies b>a.  \]

We now address the odd diameter claim. In \refEx{2}(b) we established the solubility of $D(pqr)=p$ for each $p$. It remains to consider $D(pqr)=p-2k$ for $k<\sqrt p/2-1$ and $p\ge17$. We argue in essentially the same way as in the even case. This time, for each pair $\{a,b\}$ in \eqref{3.2}, we want to take $t=a$ to get the diameter $2b+1=p-2k$. We claim that this certainly works for every $b=\frac{p-1}2-k$, with $0\le k<\sqrt p/2-1$. To verify this we need to show that if
\[  \Bigl(\frac{p-1}2-l\Bigr)\Bigl(\frac{p-1}2-k\Bigr)\equiv\pm1\pmod p,  \]
for $k$ in this range and $l$ in $0\le l<p/2$, then $l\ge k$, so that $a=\frac{p-1}2-l\le b$. This congruence is equivalent to the congruence
\[  (2l+1)(2k+1)\equiv\pm4\pmod p \implies (2l+1)(2k+1)\ge p-4.  \]
But for $l<k<\sqrt p/2-1$, we have
\[  (2l+1)(2k+1)\le4k^2-1<p-4\sqrt p+3<p-4,  \]
and the claim follows.

\subsection{Proof of \refC{8}}
We already know that for each $p$, there are $q$ and $r$ such that $D(pqr)=p$. So to verify our claim it remains to consider (composite) numbers $d=2b+1$, with $b\ge4$. Now, we have seen in the proof of \refC{6} that to get $D(pqr)=2b+1$ using \refT{3} we must show that, given $b$, there is a prime $p\ge2b+1$ and an integer $a\le b$ such that $ab\equiv\pm1\pmod p$. The existence of $p$ and $a$ satisfying these requirements follows from a general result of Laishram and Shorey \cite[Theorem 1]{LS}. Indeed, for our purposes the following fact contained in their work will suffice.

\begin{lemma}\label{L1}
Let $P(n)$ denote the largest prime factor of a natural number $n$, and put $F(n)=\prod_{i=1}^n(1+in)$. Then, for all $n\ge4$, we have
\[  P(F(n))>2n+1.  \]
\end{lemma}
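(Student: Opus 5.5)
The plan is to argue by contradiction and compare sizes. Suppose that for some $n\ge4$ every prime factor of $F(n)=\prod_{i=1}^n(1+in)$ is at most $2n+1$. Since $\gcd(1+in,n)=1$, only primes $\ell\le 2n+1$ with $\ell\nmid n$ can occur. I will bound the $\ell$-adic valuation $v_\ell(F(n))$ for each such $\ell$, multiply these bounds together, and show that the product is smaller than the trivial lower bound
\[
F(n)>\prod_{i=1}^n in=n^n\,n!,
\]
at least for $n$ beyond an explicit threshold. The finitely many remaining $n\ge4$ will be checked directly.

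For the valuation bound, fix a prime $\ell\nmid n$ and an exponent $k\ge1$. The condition $\ell^k\mid 1+in$ says $i\equiv -n^{-1}\pmod{\ell^k}$, which is a single residue class. Hence the number of $i\in[1,n]$ with $\ell^k\mid 1+in$ is at most $n/\ell^k+1$. Moreover, no term exceeds $1+n^2$, so only exponents with $\ell^k\le n^2+1$ contribute. Summing over $k$ gives
\[
v_\ell(F(n))\le\frac{n}{\ell-1}+\frac{\log(n^2+1)}{\log\ell}.
\]
Taking logarithms and summing over $\ell\le 2n+1$ then yields
\[
\log F(n)\le n\sum_{\ell\le 2n+1}\frac{\log\ell}{\ell-1}+\pi(2n+1)\log(n^2+1).
\]

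Next I insert explicit prime estimates of Rosser--Schoenfeld type, namely $\sum_{\ell\le x}\log\ell/(\ell-1)<\log x+O(1)$ with an explicit constant, and $\pi(x)<1.26\,x/\log x$. With these, the right-hand side is $n\log n+O(n)$; in particular the second term is roughly $4n$. By Stirling, the left-hand side satisfies $\log F(n)>n\log n+\log n!\ge 2n\log n-n$. The resulting inequality fails once $n\log n$ exceeds a fixed multiple of $n$, so there is an explicit $n_0$, presumably of moderate size, beyond which the contradiction is immediate.

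The main obstacle is making the constants explicit and small enough that $n_0$ is computationally manageable. For $4\le n<n_0$, I would factor $F(n)$ directly, or more cheaply exhibit for each $n$ a single factor $1+in$ with a prime divisor exceeding $2n+1$. For instance, $n=4$ gives the factor $1+4\cdot4=17>9$. If the crude bound leaves too large a range, the fallback is to sharpen the valuation estimate: the extra term $+1$ for each $k$ is only needed when $\ell^k\le n^2+1$, and for $\ell>n$ at most one $i$ is divisible by $\ell$ for each $k$. This reduces the contribution from the large primes $\ell\in(n,2n+1]$ to about $\pi(2n+1)-\pi(n)$ factors of size at most $n^2+1$, which should cut the threshold down substantially. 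Alternatively, one can simply invoke Laishram and Shorey \cite[Theorem 1]{LS}, which contains this statement.
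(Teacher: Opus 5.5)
Your argument is sound, but it is not the paper's route. The paper gives no argument of its own for this lemma and simply quotes the fact from Laishram and Shorey \cite[Theorem 1]{LS}, which is your closing alternative.

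You instead give a self-contained Sylvester--Erd\H{o}s size comparison. Under $P(F(n))\le 2n+1$, your valuation count yields $\log F(n)\le n\log(2n+1)+O(n)$. This is set against $\log F(n)>n\log n+\log n!\ge 2n\log n-n$. Explicit Rosser--Schoenfeld bounds then give a contradiction beyond a threshold of a few hundred.

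The citation buys brevity; your route buys an elementary, verifiable proof. However, the finite check is a genuine part of your proof, not a formality. The size inequality fails for small $n$, and the statement is sharp: it fails at $n=3$, where $F(3)=4\cdot7\cdot10$. So your proof is complete only once that computation is actually carried out.

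You can shrink that range considerably. For $\ell\nmid n$, comparing each factor with the one of largest $\ell$-adic valuation gives the classical bound $v_\ell(F(n))\le v_\ell((n-1)!)+\max_i v_\ell(1+in)$, with $\ell^{\max_i v_\ell(1+in)}\le n^2+1$. Hence your hypothesis forces $F(n)\le(n-1)!\,(n^2+1)^{\pi(2n+1)}$. Against $F(n)>n^{n+1}(n-1)!$, this is contradictory whenever $\pi(2n+1)\log(n^2+1)\le(n+1)\log n$. That inequality holds for all $n\ge60$: use exact values of $\pi$ for $60\le n\le72$, then $\pi(x)<1.25506\,x/\log x$. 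This leaves only $4\le n\le59$. Each of those is settled by exhibiting one term $1+in$ with a prime factor exceeding $2n+1$, such as $17$ for $n=4$.
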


Applying \refL{1} with $n=b$ shows that there is a prime $p\ge2b+1$ and an integer $2\le a\le b$ such that $ab\equiv-1\pmod p$, as required.

\section{Preliminaries}
In this section we collect the required background material on $Q_T$ of general nature---we do not yet specialize to require the parameters $p$, $q$ and $r$ to satisfy \eqref{6}. Our starting point for studying coefficients of $Q_T$ is the relation
\begin{equation}\begin{aligned}\label{4.1}
Q_T\equiv(1-x^q-x^r+x^{q+r}&)(1+x+\dots+x^{p-1}) \\
&\times\sum_{i,j,k\ge0}x^{iqr+jpr+kpq}\pmod{x^{\gf(T)+1}},
\end{aligned}\end{equation}
which readily follows from \eqref{3} and \eqref{4}. In this formulation, the central role played by integers representable by linear combinations of $qr$, $pr$ and $pq$ with nonnegative coefficients is quite clear. By the Chinese remainder theorem, every integer $n$ has a unique representation in the form
\begin{equation}\label{4.2}
n=x_nqr+y_npr+z_npq+\gd_npqr,
\end{equation}
with $0\le x_n<p$, $0\le y_n<q$, $0\le z_n<r$, $\gd_n\in\Z$, and the correspondence $n\longleftrightarrow(x_n,y_n,z_n,\gd_n)$ is well defined. Note that the coefficients $x_n$, $y_n$ and $z_n$ satisfy the congruences
\begin{equation}\label{4.3}
n\equiv x_nqr\pmod p, \quad n\equiv y_npr\pmod q, \quad n\equiv z_npq\pmod r
\end{equation}
and
\begin{equation}\label{4.4}
x_n\equiv nx_1\pmod p, \quad y_n\equiv ny_1\pmod q, \quad z_n\equiv nz_1\pmod r.
\end{equation}
An integer $n$ is representable as a nonnegative linear combination of $qr$, $pr$ and $pq$ if and only if $\gd_n\ge0$. We are interested in representable integers $n<pqr$, and in this range representable comes to $\gd_n=0$. We let $\chi(n)$ be the characteristic function of such integers
\begin{equation}\label{4.5}
\chi(n)\coloneq\begin{cases}
1, &\text{if } \gd_n=0 \\
0, &\text{otherwise.} 
\end{cases}\end{equation}

Using $\chi$, we make the point of the relation \eqref{4.1} more explicit.

\begin{lemma}\label{L2}
We have,
\begin{equation}\label{4.6}
a_m=\sum_{m-p<n\le m}\bigl(\chi(n)-\chi(n-q)-\chi(n-r)+\chi(n-q-r)\bigr).
\end{equation}
\end{lemma}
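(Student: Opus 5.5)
We derive \eqref{4.6} directly from the congruence \eqref{4.1} by comparing coefficients of $x^m$ for $0\le m\le\gf(T)$. First identify the coefficient of $x^n$ in the triple sum $\sum_{i,j,k\ge0}x^{iqr+jpr+kpq}$ for $n\le\gf(T)$.

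The number of representations $n=iqr+jpr+kpq$ with $i,j,k\ge0$ is at most $1$ when $n<pqr$. Reducing modulo $p$ forces $i\equiv x_n\pmod p$, and similarly $j\equiv y_n\pmod q$ and $k\equiv z_n\pmod r$. So $i=x_n+ap$, $j=y_n+bq$, $k=z_n+cr$ with $a,b,c\ge0$. Substituting gives $\gd_n=a+b+c$. Hence a representation exists only if $\gd_n\ge0$, and if $n<pqr$ it further forces $a=b=c=0$, so $\gd_n=0$.

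Conversely, if $\gd_n=0$ then $n=x_nqr+y_npr+z_npq$ is itself a representation. Since $\gf(T)<pqr$, the coefficient of $x^n$ in the triple sum equals $\chi(n)$ for $0\le n\le\gf(T)$. For $n<0$, $\gd_n<0$ (the representation coefficients are nonnegative), so $\chi(n)=0$, matching the empty coefficient. This confirms the convention needed when $n-q$, $n-r$ or $n-q-r$ is negative.

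Next, multiply out. Put $S(x)=\sum_n\chi(n)x^n$ truncated modulo $x^{\gf(T)+1}$. Multiplying by $1-x^q-x^r+x^{q+r}$ gives a series whose coefficient of $x^n$ is
\[
\chi(n)-\chi(n-q)-\chi(n-r)+\chi(n-q-r),
\]
using $\chi=0$ on negative arguments. Multiplying further by $1+x+\dots+x^{p-1}$ replaces the coefficient at $x^m$ by the sum of those coefficients over $m-p<n\le m$. Taking the coefficient of $x^m$ on both sides of \eqref{4.1}, which is valid since $m\le\gf(T)$, yields \eqref{4.6}.

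The only point needing care is the uniqueness and characterisation argument in the first step, namely that representability below $pqr$ is equivalent to $\gd_n=0$. This was asserted in the text but should be justified by the substitution above. Everything else is routine bookkeeping of power series products.
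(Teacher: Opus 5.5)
Your proposal is correct and follows the same route as the paper. The paper's proof simply calls \eqref{4.6} an immediate consequence of \eqref{4.1} and the definition \eqref{4.5} of $\chi$; you carry out that coefficient extraction, and you also justify two points the paper only asserts: that the coefficient of $x^n$ in the triple sum equals $\chi(n)$ for all $n<pqr$, and that $\chi(n)=0$ for negative $n$.
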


\begin{proof}
This is just a more explicit form of \eqref{4.1} and it is an immediate consequence of \eqref{4.1} and \eqref{4.5}.
\end{proof}

\begin{definition}
We write $\la n\ra_m$ to denote the least nonnegative residue of $n$ modulo $m$, i.e.,
\[  \la n\ra_m\equiv n\pmod m \quad\text{and}\quad 0\le\la n\ra_m<m.  \]
\end{definition}
We will find this notation handy in what follows. For starters, we use it to rewrite the first two congruences in \eqref{4.4} as equations
\begin{equation}\label{4.7}
x_n=\la nx_1\ra_p \quad\text{and}\quad y_n=\la ny_1\ra_q.
\end{equation}
(We skip the analogue of this for $z_n$ for it will play no further role.)

Let $r^*$ be a multiplicative inverse of $r$ modulo $pq$ and observe that, by \eqref{4.2},
\begin{equation}\label{4.8}
nr^*\equiv x_nq+y_np\pmod{pq}.
\end{equation}
The quantity
\begin{equation}\label{4.9}
f(n)=f_T(n)\coloneq x_nq+y_np
\end{equation}
plays a key role in our analysis due to the characterization of representable numbers $n$ given in \refL{3} below. Note that, since $0\le f(n)<2pq$, it must be that
\begin{equation}\label{4.10}
f(n)=\la nr^*\ra_{pq} \quad\text{or}\quad f(n)=\la nr^*\ra_{pq}+pq.
\end{equation}

\begin{lemma}\label{L3}
For $n<pqr$, we have
\[  \chi(n)=1 \quad\text{if and only if}\quad f(n)=\la nr^*\ra_{pq}\le\lfloor n/r\rfloor.  \]
\end{lemma}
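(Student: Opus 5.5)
The plan is to compute $\gd_n$ directly from the representation \eqref{4.2}, using \eqref{4.8} and \eqref{4.10}. Multiplying \eqref{4.2} by $r^*$ and reducing modulo $pq$ gives \eqref{4.8}, which we already have. To recover $\gd_n$, rewrite \eqref{4.2} as
\[
n = r\bigl(x_nq+y_np\bigr) + z_npq + \gd_npqr = r f(n) + pq\,(z_n+\gd_n r).
\]
Since $0\le z_n<r$, the quantity $z_n+\gd_nr$ determines both $z_n$ and $\gd_n$. We have $\gd_n=0$ exactly when $0\le z_n+\gd_nr<r$, that is, when $0\le n-rf(n)<pqr$.

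The key steps, in order, are as follows. First, assume $\chi(n)=1$, so $\gd_n=0$. Then $n=rf(n)+z_npq\ge rf(n)$, hence $f(n)\le n/r$, and since $f(n)$ is an integer, $f(n)\le\lfloor n/r\rfloor$. Because $n<pqr$, we get $f(n)\le\lfloor n/r\rfloor<pq$, so by \eqref{4.10} the second alternative $f(n)=\la nr^*\ra_{pq}+pq\ge pq$ is excluded. Therefore $f(n)=\la nr^*\ra_{pq}$.

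Conversely, suppose $f(n)=\la nr^*\ra_{pq}\le\lfloor n/r\rfloor$. Then $n-rf(n)\ge0$. Also $n-rf(n)\le n<pqr$, where we take $n\ge0$; for $n<0$ the hypothesis $f(n)\le\lfloor n/r\rfloor<0$ is impossible since $f(n)\ge0$, so both sides are false. It follows that $z_n+\gd_nr=(n-rf(n))/(pq)\in[0,r)$. This is an integer congruent to $z_n$ modulo $r$, and $z_n\in[0,r)$, so it equals $z_n$ and $\gd_n=0$.

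I do not expect a real obstacle here. The only care needed is the bookkeeping in the first direction: the condition $n<pqr$ is used there to force $f(n)<pq$ and so select the first alternative in \eqref{4.10}. One should also note that negative $n$, which appear as arguments in \refL{2}, are handled consistently: $\chi(n)=0$ for them because $\gd_n<0$, and the right-hand condition fails because $f(n)\ge0>\lfloor n/r\rfloor$.
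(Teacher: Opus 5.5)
Your proof is correct and is the computation the paper has in mind: the paper only says the lemma ``follows readily from \eqref{4.2}, \eqref{4.9} and \eqref{4.10}'' and cites \cite{Ba.IEP} for details, and your identity $n=rf(n)+pq(z_n+\gd_nr)$, combined with $\gd_n=0 \iff 0\le n-rf(n)<pqr$, supplies exactly those details. One small simplification: the case split on the sign of $n$ in the converse direction is unnecessary, since $f(n)\ge0$ gives $n-rf(n)\le n<pqr$ for every $n$.
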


\begin{proof}
This follows readily from \eqref{4.2}, \eqref{4.9} and \eqref{4.10} (see \cite[(37)-(39)]{Ba.IEP}).
\end{proof}

Our next result is also contained in \cite[Theorem 3]{Ba.IEP}.

\begin{lemma}\label{L4}
We have,
\begin{equation}\label{4.11}
\CA_{\{p,q,r\}}=\CA_{\{p,q,r+pq\}}\qquad[r>\max(p,q)],
\end{equation}
and, in particular, to study $A^\pm(p,q,r)$ we may assume that $r>pq$. Furthermore, if $r'\equiv-r\pmod{pq}$ and $r'>p,q$, then
\begin{equation}\label{4.12}
\CA_{\{p,q,r'\}}=-\CA_{\{p,q,r\}}.
\end{equation}
\end{lemma}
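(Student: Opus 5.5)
The plan is to work directly from the coefficient formula in \refL{2} and the representability criterion in \refL{3}, so that the statement becomes a comparison of sets of exponents.

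\emph{Periodicity \eqref{4.11}.} Fix $T=\{p,q,r\}$ and $T_1=\{p,q,r_1\}$ with $r_1=r+pq$. Since $r_1\equiv r\pmod{pq}$, we may take the same inverse $r^*$ for both, so the residue $\la nr^*\ra_{pq}$ depends only on $n$, not on which of $r,r_1$ is used. Write $n=kr+j$ with $0\le j<r$ in the $T$ picture, and $n_1=kr_1+j_1$ in the $T_1$ picture. I would then build an explicit correspondence between the index $m$ for $T$ and an index $m_1$ for $T_1$, sending $m=kr+j$ to a suitable shift of $kr_1+j$. The aim is that the windows $(m-p,m]$, after applying the shifts by $q$, $r$ and $q+r$ in \eqref{4.6}, are carried to the corresponding windows for $T_1$ with the same values of $\chi$.

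The natural way to do this is through the symmetric description of $\CA_T$ via the function $f$. For each residue class one records the $\lfloor n/r\rfloor$-thresholds, and one then sees that the values of $a_m$ for $T_1$ are the values for $T$ with some blocks repeated, because $\lfloor n/r_1\rfloor$ runs more slowly. So every coefficient of $Q_{T_1}$ is a coefficient of $Q_T$, and conversely.

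This can be made cleaner with generating functions. Write $Q_T$ as $\Phi$-type products modulo $x^{pq}-1$ (in the $x^r$ direction) and use the factorization
\[
Q_{\{p,q,r\}}(x)=Q_{\{p,q\}}(x^r)\,\frac{(x^{pq}-1)(x-1)}{(x^p-1)(x^q-1)}\cdots .
\]
More concretely, use the standard identity
\[
Q_T(x)=\Bigl(\sum_{k}c_kx^{k}\Bigr)\cdot\frac{1-x^{pqr}}{1-x^{r}}\cdots,
\]
which expresses $a_m$ as a sum over a residue class of $m$ modulo $r$ of the coefficients of the binary polynomial $Q_{\{p,q\}}$ times a fixed factor. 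That sum depends only on $r\bmod pq$ together with where $m$ sits, and this gives \eqref{4.11}. I expect the main obstacle here to be bookkeeping: showing that the extra $pq$ in $r_1$ only repeats existing partial sums and never creates new ones. I would handle this via the criterion $\la nr^*\ra_{pq}\le\lfloor n/r\rfloor$ of \refL{3}, noting that the threshold $\lfloor n/r\rfloor$ is bounded by $p$ and $q$ in the relevant range.

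\emph{Reflection \eqref{4.12}.} Here I would use the reciprocal symmetry. By \eqref{4.11} we may reduce to $r'=kpq-r$ with $k$ large. Taking $r^{*\prime}=-r^*$, we get $\la n r^{*\prime}\ra_{pq}=pq-\la nr^*\ra_{pq}$ for $pq\nmid n$. In \refL{3}, the condition $\chi'(n)=1$ then becomes a complementary condition. Substituting into \eqref{4.6}, the terms $\chi(n)-\chi(n-q)-\chi(n-r)+\chi(n-q-r)$ should turn into the negatives of the analogous terms for $T$ at a reflected index, using the complement rule $\chi(n)+\chi(c-n)=1$ with $c=pqr-pq-qr-rp$ from the numerical-semigroup symmetry. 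The constant contributions cancel because each window contains the same number of terms with signs $+,-,-,+$. This yields $a'_{m'}=-a_m$ for a bijection $m\mapsto m'$, and hence $\CA_{T'}=-\CA_T$.
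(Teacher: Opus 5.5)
The paper gives no proof of this lemma; it imports it from \cite[Theorem 3]{Ba.IEP} and notes that Kaplan had the case $r,r'>pq$. So your argument must stand on its own, and as written it does not.

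\textbf{The reflection half rests on a false identity.} Take $c=pqr-pq-qr-rp$ and write $n=xqr+ypr+zpq+\gd pqr$ as in \eqref{4.2}. Then
\[
c-n=(p-1-x)qr+(q-1-y)pr+(r-1-z)pq-(\gd+2)pqr,
\]
so $\gd_{c-n}=-\gd_n-2$ and $\chi(n)+\chi(c-n)=[\gd_n=0]+[\gd_n=-2]$. This vanishes whenever $\gd_n=-1$. For example, for $\{3,5,7\}$ we have $c=34$ and $\chi(1)=\chi(33)=0$.

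The genuine symmetry of the semigroup generated by $qr,rp,pq$ is centred at $2pqr-pq-qr-rp$ and concerns the condition $\gd_n\ge0$, not $\chi$. In any case it involves a single triple, so it cannot link $T$ to $T'$.

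You also use only half of \refL{3}. The condition $\chi(n)=1$ requires both $f(n)=\la nr^*\ra_{pq}$ (the residue must have the form $xq+yp$ with $x,y\ge0$) and $\la nr^*\ra_{pq}\le\lfloor n/r\rfloor$. Replacing $g$ by $pq-g$ disturbs both conditions, so no ``complementary condition'' emerges. Nor do you say what the reflected index is, or how the windows shifted by $r$ are matched with those shifted by $r'$.

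\textbf{The periodicity half never defines the correspondence $m\leftrightarrow m_1$.} The displayed factorization is wrong; the correct identity is $Q_{\{p,q,r\}}(x)=Q_{\{p,q\}}(x^r)/Q_{\{p,q\}}(x)$. The sentence ``that sum depends only on $r\bmod pq$ together with where $m$ sits'' is the statement to be proved. The size of $\lfloor n/r\rfloor$, which runs up to about $(p-1)(q-1)$ and not up to $p$ or $q$, is not the relevant point.

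\textbf{What actually drives both halves.} Let $c_j$ denote the coefficients of $1/Q_{\{p,q\}}(x)=\frac{(1-x^p)(1-x^q)}{(1-x)(1-x^{pq})}$. They are $pq$-periodic, and $c_{p+q-1-j}=-c_j$ because $(1+x+\dots+x^{p-1})(1-x^q)$ is anti-palindromic of degree $p+q-1<pq$. With $a(k;\{p,q\})$ the coefficients of $Q_{\{p,q\}}$, one has exactly
\[
a(m;T)=\sum_{0\le k\le m/r}a(k;\{p,q\})\,c_{m-kr}\qquad(m\ge0).
\]

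For periodicity, let $r_1=r+pq$ and $K=\lfloor m/r\rfloor$. The index $m_1=m+Kpq$ satisfies $\lfloor m_1/r_1\rfloor=K$ and $m_1-kr_1\equiv m-kr\pmod{pq}$, so $a(m_1;T_1)=a(m;T)$. If $r>pq$, then conversely each $m_1$ with $\lfloor m_1/r_1\rfloor=K$ has a partner $m\in[Kr,Kr+pq)$ with $m\equiv m_1\pmod{pq}$. These partners are your ``repeated blocks.''

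For reflection, suppose $r'>pq$. Choose $m'\in[Kr',Kr'+pq)$ with $m'\equiv p+q-1-m\pmod{pq}$. Then $m'-kr'\equiv p+q-1-(m-kr)\pmod{pq}$, so $a(m';T')=-a(m;T)$.

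This settles the lemma for $r,r'>pq$, which is Kaplan's range. The proof of \refT{3} needs it down to $r,r'>\max(p,q)$, since there $r,r'$ are only known to exceed $q$. That extension requires a further argument, which is exactly what the citation \cite[Theorem 3]{Ba.IEP} supplies.
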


In an earlier work Kaplan \cite{Ka.Flat} derived these identities under the assumption that $r,r'>pq$.

We end this section with two lemmas which will be used to greatly narrow down the search for coefficients $a_m$ such that $a_m=A^\pm(T)$. We use $\CN_m$ to denote the set of all arguments occuring in the sum \eqref{4.6}, that is,
\begin{equation}\label{4.13}
\CN_m\coloneq I_{m}\cup I_{m-q}\cup I_{m-r}\cup I_{m-q-r},
\end{equation}
where $I_x\coloneq(x-p,x]\cap\Z$.

\begin{lemma}\label{L5}
The identity $a_m=a_{m-pq}$ holds for every index $m$ such that the range $\CN_m$ contains no multiples of $r$. An if $\CN_m$ contains no multiples of $q$, then $a_m=a_{m-pr}$. \emph{(}Of course, we extended the definition of $a_m$ by setting $a_i=0$ for $i<0$.\emph{)}
\end{lemma}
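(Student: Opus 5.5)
We prove the first claim; the second follows by the symmetric argument with the roles of $q$ and $r$ interchanged. The plan is to compare the two sums \eqref{4.6} for $a_m$ and $a_{m-pq}$ term by term. Shifting the index by $pq$ shows that $a_{m-pq}$ is the same sum over $n\in\CN_m$ with each argument $n$ replaced by $n-pq$. So it suffices to show that $\chi(n)=\chi(n-pq)$ for every $n\in\CN_m$, where we set $\chi(n)=0$ for $n<0$.

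\emph{The key step.} We use the representation \eqref{4.2}. Subtracting $pq$ leaves $x_n$ and $y_n$ unchanged, and lowers $z_n$ by one whenever $z_n\ge1$; in that case $\gd_{n-pq}=\gd_n$. If instead $z_n=0$, then $z_{n-pq}=r-1$ and $\gd_{n-pq}=\gd_n-1$. By \eqref{4.3}, the condition $z_n=0$ holds exactly when $r\mid n$, since $pq$ is invertible modulo $r$. Because $\CN_m$ contains no multiple of $r$, every $n\in\CN_m$ has $z_n\ge1$, and hence $\gd_{n-pq}=\gd_n$.

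\emph{Matching the values of $\chi$.} It remains to check that $\chi(n)$ and $\chi(n-pq)$ agree, including near the endpoints of the range $[0,pqr)$ where \eqref{4.5} applies. The arguments in \eqref{4.6} are the ones relevant to indices $0\le m\le\gf(T)$. For arguments below $pqr$ with $\gd=0$, the number is nonnegative and representable, so $\chi=1$ on both sides. For negative arguments, the convention $\chi=0$ should be consistent with $\gd<0$, because every nonnegative linear combination of $qr,pr,pq$ is $\ge0$. One case needs a separate look: $n\ge0$ with $\gd_n=0$ and $z_n\ge1$. Then $n-pq=x_nqr+y_npr+(z_n-1)pq\ge0$, so again $\chi(n)=\chi(n-pq)=1$. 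Conversely, $\gd_n\neq0$ forces $\gd_{n-pq}\neq0$, and then both values of $\chi$ are $0$, with negative arguments counted as $0$ either way.

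\emph{Where care is needed.} The main point to watch is bookkeeping rather than any real difficulty. The identity \eqref{4.6} with the convention $a_i=0$ for $i<0$ must be stated with $\chi$ defined for every integer argument by $\chi(n)=1$ iff $\gd_n=0$. This requires checking that no argument in $\CN_m$ reaches $pqr$ or beyond, since $m\le\gf(T)<pqr$. It also requires checking that for $m-pq<0$ the formula correctly returns $0$. The second fact holds because all arguments are then negative, so $\gd<0$. Once these conventions are fixed, the term-by-term identity $\chi(n)=\chi(n-pq)$ gives $a_m=a_{m-pq}$, and the $q$-version follows in the same way using $y_n$ and $pr$.
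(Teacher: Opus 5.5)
Your proof is correct and complete. The paper gives no argument here and simply cites Lemma~7 of \cite{Ba.IEP}; your term-by-term comparison supplies the natural self-contained proof, and its key fact is also the mechanism behind \refL{6}: subtracting $pq$ lowers $z_n$ by one and leaves $\gd_n$ unchanged whenever $z_n\ge1$, i.e.\ whenever $r\nmid n$. The bookkeeping you flag is the only care required, and you handle it correctly: all arguments lie below $pqr$, and the shifted sum vanishes when $m-pq<0$ because $\gd_n=0$ forces $n\ge0$.
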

\begin{proof}
This is Lemma 7 in \cite{Ba.IEP}.
\end{proof}

\begin{lemma}\label{L6}
If $\CN_m$ contains exactly two multiples of $r$, say $r\mid l$,
\begin{equation}\label{4.14}
l\in I_m\cup I_{m-q-r}\quad\text{and}\quad l\mp r\in I_{m-r}\cup I_{m-q},
\end{equation}
then
\begin{equation}\label{4.15}
a_m-a_{m-pq}=\chi(l)-\chi(l\mp r).
\end{equation}
If $\CN_m$ contains exactly two multiples of $q$, say $q\mid l'$,
\begin{equation}\label{4.16}
l'\in I_m\cup I_{m-q-r}\quad\text{and}\quad l'\mp q\in I_{m-q}\cup I_{m-r},
\end{equation}
then
\begin{equation}\label{4.17}
a_m-a_{m-pr}=\chi(l')-\chi(l'\mp q).
\end{equation}
The pair of choices of signs $\mp$ above is unique and corresponds to the inclusion $l\mp r, l'\mp q\in\CN_m$.
\end{lemma}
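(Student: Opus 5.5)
The plan is to compare the formula \eqref{4.6} for $a_m$ with the same formula for $a_{m-pq}$, following the way \refL{5} is proved. Shifting the summation variable by $pq$ gives
\[
a_m-a_{m-pq}=\sum_{n\in I_m}\Delta(n)-\sum_{n\in I_{m-q}}\Delta(n)-\sum_{n\in I_{m-r}}\Delta(n)+\sum_{n\in I_{m-q-r}}\Delta(n),
\]
where $\Delta(n)\coloneq\chi(n)-\chi(n-pq)$, with $\chi$ extended by $0$ to negative arguments. Since the four intervals $I_x$ are exactly the pieces of $\CN_m$, everything reduces to understanding $\Delta(n)$ for $n\in\CN_m$.

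The first step is to show that $\Delta(n)=0$ unless $r\mid n$. By \eqref{4.2}, subtracting $pq$ from $n$ only changes $z_n$: if $z_n\ge1$, then $n-pq$ has coordinates $(x_n,y_n,z_n-1,\gd_n)$, so $\gd_{n-pq}=\gd_n$ and $\chi(n)=\chi(n-pq)$. This needs a little care near $0$ and near $pqr$, where the definition \eqref{4.5} of $\chi$ and the convention $\chi=0$ for negative arguments both come into play; that is exactly what \refL{5} handles, and I will reuse it. If $z_n=0$, that is $r\mid n$, then $n-pq$ has $z=r-1$ and $\gd$ drops by $1$. Hence $\chi(n-pq)=0$ and $\Delta(n)=\chi(n)$. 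So $\Delta(n)=\chi(n)$ when $r\mid n$ and $\Delta(n)=0$ otherwise.

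Under the hypothesis of the lemma, the only surviving terms come from the two multiples $l$ and $l\mp r$. The term for $l$ carries sign $+$, since $l\in I_m\cup I_{m-q-r}$. The term for $l\mp r$ carries sign $-$, since it lies in $I_{m-r}\cup I_{m-q}$. This yields \eqref{4.15} directly. One point to check is that a multiple of $r$ cannot lie in two of the intervals simultaneously in a way that changes its sign count. Overlaps between the intervals can occur only when $q<p$ or $r<p$, or through coincidences among the shifts $0,q,r,q+r$. Under the standing convention $r>\max(p,q)$ from \refL{4}, each pair of intervals is either disjoint or, in the case of $I_m$ and $I_{m-q}$ when $q<p$, has its overlap cancel. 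I would verify this carefully, because it is the main obstacle: the bookkeeping must make sure that the sign attached to each of $l$ and $l\mp r$ is well defined.

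The statement for $q$ is proved the same way, with $pr$ in place of $pq$ and $y_n$ in place of $z_n$. The uniqueness of the sign choice follows because two multiples of $r$ lying in $\CN_m$ must differ by exactly $r$: the intervals have length $p<r$ and are offset by $0$, $q$, $r$, $q+r$. The relative position of $l$ then determines whether the partner is $l-r$ or $l+r$.
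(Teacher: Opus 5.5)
Your reduction to $\Delta(n)=\chi(n)-\chi(n-pq)$ is sound. It is also more self-contained than the paper, which only points to the proof of Lemma~8 in Bachman's earlier paper on inclusion-exclusion polynomials. Two small corrections there:
\begin{enumerate}
\item[(1)] $\chi(n)=\chi(n-pq)$ for $r\nmid n$ holds for every integer $n$ by uniqueness in \eqref{4.2}, because $\chi$ is defined through $\gd_n$ and is automatically $0$ for $n<0$. So \refL{5} plays no role.
\item[(2)] For $r\mid n$, the conclusion $\chi(n-pq)=0$ needs $\gd_n\ne1$. This follows from $n\le\gf(T)<pqr$.
\end{enumerate}

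\textbf{The gap.} The real gap is the sign bookkeeping, which you call the main obstacle and then justify incorrectly. For $r>\max(p,q)$ it is not true that every overlap is the cancelling one $I_m\cap I_{m-q}$. The intervals $I_{m-q}$ and $I_{m-r}$ meet whenever $r-q<p$ (e.g.\ $p=5$, $q=7$, $r=9$). Both carry the sign $-$, so a multiple of $r$ there would be counted as $-2\Delta$. Under the lemma's hypothesis this cannot actually happen, but that is exactly what needs an argument. Nor is the overlap $I_m\cap I_{m-q}$ for $q<p$ harmless; it breaks the lemma. Suppose a multiple $k$ of $r$ lies in it. Then $k-r\in I_{m-r}\cap I_{m-q-r}$, and $\CN_m$ has exactly the two multiples $k,k-r$. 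Both sign choices in \eqref{4.14} are admissible, yet the cancellation gives $a_m-a_{m-pq}=0$. For instance, $p=7$, $q=3$, $r=10$, $m=13$ give $a_{13}=0$, while $\chi(10)-\chi(0)=-1$. So the lemma needs $p<\min(q,r)$, which the paper assumes implicitly (cf.\ ``disjoint ($q>p$)'' in Section~5). The proof must also genuinely use the hypothesis that there are exactly two multiples.

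\textbf{The fix.} The missing idea is to pair each interval with its shift by $r$ before counting multiples. Since $\sum_{n\in I_{m-r}}\Delta(n)=\sum_{n\in I_m}\Delta(n-r)$ and $\sum_{n\in I_{m-q-r}}\Delta(n)=\sum_{n\in I_{m-q}}\Delta(n-r)$, you get
\[ a_m-a_{m-pq}=\sum_{n\in I_m}\bigl(\Delta(n)-\Delta(n-r)\bigr)-\sum_{n\in I_{m-q}}\bigl(\Delta(n)-\Delta(n-r)\bigr), \]
and overlaps no longer matter. Since $p<r$, each of $I_m$ and $I_{m-q}$ contains at most one multiple of $r$. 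At least one of them contains one, since a multiple in $I_{m-r}$ or $I_{m-q-r}$ shifts by $r$ into $I_m$ or $I_{m-q}$. If both did, say $k$ and $k'$, then $k\ne k'$ because $q>p$. In that case $\CN_m$ would contain at least three of $k,k-r,k',k'-r$, a contradiction. So exactly one does:
\begin{enumerate}
\item[(a)] $k\in I_m$ gives $\chi(k)-\chi(k-r)$ with $l=k$;
\item[(b)] $k\in I_{m-q}$ gives $\chi(k-r)-\chi(k)$ with $l=k-r\in I_{m-q-r}$.
\end{enumerate}
This also proves the uniqueness of the sign, which your remark about the offsets $0,q,r,q+r$ does not. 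The $q$-statement is identical with $pr$ and $y_n$ in place of $pq$ and $z_n$. There you pair $I_m$ with $I_{m-q}$ and $I_{m-r}$ with $I_{m-q-r}$, using $p<q$ for at most one multiple per interval and $p<r$ for $I_m\cap I_{m-r}=\emptyset$.
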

\begin{proof}
The validity of both of these assertions is contained in the proof of Lemma 8 in \cite{Ba.IEP}.
\end{proof}

\section{Proof of \refT{3}}
\subsection{Preparation and Reductions}
We are now ready to focus on the family of polynomials $Q_T$ covered by \refT{3}---polynomials with parameters $p,q,r,r'$ satisfying \eqref{6} and \eqref{7}. Noting that the congruences $rt\equiv1\pmod{pq}$ and $r't\equiv-1\pmod{pq}$ imply that $r'\equiv-r\pmod{pq}$, we immediately conclude that \eqref{8} is a consequence of \eqref{4.12} in \refL{4}. This reduces the theorem to evaluation of $A^\pm(p,q,r)$ for $p,q,r$ satisfying \eqref{6}. Furthermore, recalling that \refT{3}(i) corresponding to the case $t=1$ is a special case of \refT{4} (as discussed in Section 2), we make our second reduction leaving us to consider $t>1$---Parts (ii-iv)--- as we shall assume henceforth.

Consider the subranges $I_m$ and $I_{m-q}$ of $\CN_m$. They are disjoint ($q>p$) and if $r\ge p+q$, there cannot be more than one multiple of $r$ in $I_m\cap I_{m-q}$, whence there are at most two multiples of $r$ in $\CN_m$. Recall that, by \eqref{4.11}, we are free to assume that $r>pq$, as we do now. Therefore, any range $\CN_m$ which contains a multiple of $r$ must contain exactly two of them, as given in \eqref{4.14}. We show now that the condition $rt\equiv1\pmod{pq}$ implies that the same is true for multiples of $q$: any range $\CN_m$ which contains a multiple of $q$ must contain exactly two of them, as given in \eqref{4.16}. Indeed, if the number of multiples of $q$ in $\CN_m$ exceeded two, there would be four of them, one in each of the $I$-subranges. So consider the possibility that $l'\in I_m$ and $l'-r+i\in I_{m-r}$ are both multiples of $q$. Then $q\mid(i-r)$ for some $i$ with $0<|i|<p$. But, by \eqref{6},
\[  t(r-i)\equiv1-ti\not\equiv0\pmod q,  \]
since $1<t\le|ti|\le(p-1)^2<q-1$, a contradiction.

\begin{lemma}\label{L7}
If $m$ is the smallest index such that $a_m=A^+(T)$, or it is the smallest index such that $a_m=A^-(T)$, then the range $\CN_m$ must contain exactly two multiples of $r$ and exactly two multiples of $q$. Let us write $l$ and $l'$ for the multiples of $r$ and $q$, respectively, in the subranges:
\begin{align}
l,l'\in I_m\cup I_{m-q-r}, \qquad & \text{for } a_m=A^+(T), \label{6.1}\\
l,l'\in I_{m-r}\cup I_{m-q}, \qquad & \text{for } a_m=A^-(T). \label{6.2}
\end{align}
(Note that $l=l'$ is a possibility in either case.) Then, in either case, we have
\begin{equation}\label{6.3}
\chi(l)=1\quad\text{and}\quad \chi(l\pm r)=0 \qquad[l\pm r\in\CN_m]
\end{equation}
and
\begin{equation}\label{6.4}
\chi(l')=1\quad\text{and}\quad \chi(l'\pm q)=0 \qquad[l'\pm q\in\CN_m].
\end{equation}
\end{lemma}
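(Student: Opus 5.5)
We argue by minimality, combining Lemmas~\ref{L5} and~\ref{L6} with the structural facts just established. By those facts, any $\CN_m$ contains either zero or exactly two multiples of $r$, and either zero or exactly two multiples of $q$, arranged as in \eqref{4.14} and \eqref{4.16}. We treat $A^+(T)$; the case $A^-(T)$ is symmetric with inequalities reversed.

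\emph{Step 1: both kinds of multiples occur.} Let $m$ be the smallest index with $a_m=A^+(T)$. Suppose $\CN_m$ contains no multiple of $r$. Then Lemma~\ref{L5} gives $a_{m-pq}=a_m=A^+(T)$. If $m-pq\ge0$ this contradicts minimality. If $m-pq<0$ we get $a_{m-pq}=0<A^+(T)$, which is impossible, since $A^+(T)\ge1$ (every $Q_T$ has positive coefficients, e.g.\ $a_0=1$). Hence $\CN_m$ contains a multiple of $r$, and therefore exactly two of them. The same argument with $pr$ in place of $pq$ shows that $\CN_m$ contains exactly two multiples of $q$.

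\emph{Step 2: position and values for $r$.} Lemma~\ref{L6} now applies. Label the two multiples of $r$ as $l\in I_m\cup I_{m-q-r}$ and $l\mp r\in I_{m-r}\cup I_{m-q}$, so that
\[
a_m-a_{m-pq}=\chi(l)-\chi(l\mp r).
\]
By minimality, $a_{m-pq}<A^+(T)=a_m$; this holds whether or not $m-pq\ge0$, since $a_i=0$ for $i<0$. So the right-hand side is positive. As it is a difference of two $\{0,1\}$-valued quantities, it must equal $1$, which forces $\chi(l)=1$ and $\chi(l\mp r)=0$. This gives \eqref{6.1} for $l$ together with \eqref{6.3}.

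\emph{Step 3: the case $A^-(T)$.} Here $A^-(T)\le-1$, so $a_{m-pq}>a_m$ and the difference in Step 2 equals $-1$. Thus the element with $\chi=1$ is $l\mp r$, which lies in $I_{m-r}\cup I_{m-q}$. We then rename that element $l$ and the other one $l\pm r$. This produces \eqref{6.2} and \eqref{6.3} in the stated form. The multiples of $q$ are handled identically, using \eqref{4.17} and the index $m-pr$, giving \eqref{6.4}.

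The main subtlety is bookkeeping rather than depth. First, the sign convention in Lemma~\ref{L6} must be matched to the renaming in the $A^-$ case. Second, the boundary case $m-pq<0$ (or $m-pr<0$) must be handled, and it is harmless only because $A^+(T)>0>A^-(T)$. Finally, "exactly two" relies on the preceding paragraph, which uses the reduction $r>pq$ from \eqref{4.11} together with $q>p^2$.
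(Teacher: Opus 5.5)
Your proposal is correct and follows essentially the same route as the paper. Lemma~5 together with minimality forces $\CN_m$ to contain multiples of both $r$ and $q$, and Lemma~6 together with minimality forces $a_m-a_{m-pq}$ and $a_m-a_{m-pr}$ to equal $\pm1$, which pins down the $\chi$-values (after renaming in the $A^-(T)$ case). You also spell out the boundary case $m-pq<0$ (or $m-pr<0$) using $A^+(T)\ge1$ and $A^-(T)\le-1$, a detail the paper's proof leaves implicit.
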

\begin{proof}
In either case, $\CN_m$ must contain multiples of $r$ and $q$, by \refL{5}, whence it must contain exactly two of each. If $a_m=A^+(T)$, then, by the definition of $m$, $l$ and $l'$, and by \eqref{4.15} and \eqref{4.17}, we have
\[  a_m-a_{m-pq}=\chi(l)-\chi(l\mp r)=1  \]
and
\[  a_m-a_{m-pr}=\chi(l')-\chi(l'\mp q)=1,  \]
and \eqref{6.3} and \eqref{6.4} follow. If $a_m=A^-(T)$, we argue in the same way and write
\[  -(a_m-a_{m-pq})=\chi(l)-\chi(l\pm r)=1  \]
and
\[  -(a_m-a_{m-pr})=\chi(l')-\chi(l'\pm q)=1,  \]
to get \eqref{6.3} and \eqref{6.4} again.
\end{proof}

With \refL{7} to hand, we can announce our strategy for determining $A^\pm(T)$. We can restrict our attention to coefficients $a_m$ with ranges $\CN_m$ fulfilling the requirements \eqref{6.3} and \eqref{6.4}. Evaluating such $a_m$ will determine $A^\pm(T)$ as the maximum and minimum of these values. We can significantly streamline these calculations by combining all possible choices of signs in $A^\pm$ and in \eqref{6.3} and \eqref{6.4} into a single structure capturing all these different cases. To this end, we introduce the sum $S(Q,R;M)$ defined as follows. For $M\le\gf(T)$, we put
\begin{equation}\label{6.5}
S(Q,R;M)\coloneq\sum_{M-p<n\le M}\bigl(\chi(n)-\chi(n+Q)-\chi(n+R)+\chi(n+Q+R)\bigr),
\end{equation}
where $Q=q$ or $-q$ and $R=r$ or $-r$. Note that, by \eqref{4.6},
\[  a_m=S(-q,-r;m)=S(q,r;m-q-r)  \]
and
\[  -a_m=S(-q,r;m-r)=S(q,-r;m-q).  \]
As before \eqref{4.13}, we write $I_M=(M-p,M]\cap\Z$, but now the set of all arguments in the sum \eqref{6.5} becomes
\begin{equation}\label{6.8}
\CN_M\coloneq I_M\cup I_{M+Q}\cup I_{M+R}\cup I_{M+Q+R}.
\end{equation}
Of special interest to us are the triples $(Q,R;M)$ with the following properties. The subrange $I_M$ contains $l$, a multiple of $r$, and
\begin{equation}\label{6.9}
\chi(l)=1 \quad\text{and}\quad \chi(l+R)=0,
\end{equation}
as well as $I_M\cup I_{M+Q+R}\ni l'$, a multiple of $q$, and
\begin{equation}\label{6.10}
\chi(l')=1 \quad\text{and}\quad \chi(l'\pm Q)=0 \qquad[l'\pm Q\in\CN_M].
\end{equation}
We denote the set of all such triples $(Q,R;M)$, with $M\le\gf(T)$, by $\CM$.

\begin{lemma}\label{L8}
We have
\begin{equation}\label{6.11}
A^+(T)=\max_{\substack{(Q,R;M)\in\CM\\\sgn Q=\sgn R}}S(Q,R;M)
\end{equation}
and
\begin{equation}\label{6.12}
-A^-(T)=\max_{\substack{(Q,R;M)\in\CM\\\sgn Q=-\sgn R}}S(Q,R;M).
\end{equation}
\end{lemma}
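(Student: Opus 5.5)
The plan is to prove each of \eqref{6.11} and \eqref{6.12} as two inequalities. The bound ``$\le$'' will come from Lemma~\ref{L7}, by producing a suitable triple in $\CM$. The bound ``$\ge$'' will come from the identities relating $S$ to the coefficients $a_m$ stated just before \eqref{6.8}.

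\emph{Step 1 (every admissible $S$ is a coefficient, up to sign).} For $(Q,R;M)\in\CM$ with $\sgn Q=\sgn R$, I would reindex the sum \eqref{6.5} using the identities $a_m=S(-q,-r;m)=S(q,r;m-q-r)$. This gives $S(Q,R;M)=a_m$ with $m=M$ when $Q=-q,R=-r$, and $m=M+q+r$ when $Q=q,R=r$. The one point to check is that \eqref{4.6} still gives a genuine coefficient of $Q_T$ when $m$ exceeds $\gf(T)$, so that $a_m=0$ there.
\begin{itemize}
\item In \eqref{4.1}, the congruence mod $x^{\gf(T)+1}$ comes only from dropping the factor $1-x^{pqr}$.
\item That factor affects only degrees $\ge pqr$, and $\chi$ only records $n<pqr$ with $\gd_n=0$.
\item Hence for all $m<pqr$ the right side of \eqref{4.6} equals the true coefficient of $Q_T$, which is $0$ for $\gf(T)<m<pqr$.
\end{itemize}
Since $m\le M+q+r<pqr$, we get $S(Q,R;M)\in\CA_T\cup\{0\}$, so $S(Q,R;M)\le A^+(T)$, using $A^+(T)\ge1>0$. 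Likewise, when $\sgn Q=-\sgn R$, the identities $-a_m=S(-q,r;m-r)=S(q,-r;m-q)$ give $S(Q,R;M)\le -A^-(T)$, since $A^-(T)\le -1$.

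\emph{Step 2 (the extremal coefficient is realized by a triple in $\CM$).} Let $m$ be the smallest index with $a_m=A^+(T)$. By Lemma~\ref{L7}, $\CN_m$ contains multiples $l$ of $r$ and $l'$ of $q$ with $l,l'\in I_m\cup I_{m-q-r}$, satisfying \eqref{6.3} and \eqref{6.4}.
\begin{itemize}
\item If $l\in I_m$, take $(Q,R;M)=(-q,-r;m)$. Then $I_M=I_m\ni l$, and $l+R=l-r\in\CN_m$ has $\chi=0$ by \eqref{6.3}, so \eqref{6.9} holds. The set $I_M\cup I_{M+Q+R}$ is exactly $I_m\cup I_{m-q-r}\ni l'$, and \eqref{6.10} is \eqref{6.4}. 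Also $M=m\le\gf(T)$.
\item If instead $l\in I_{m-q-r}$, take $(q,r;m-q-r)$. Now $I_M\ni l$, and $l+r\in\CN_m$ has $\chi(l+r)=0$. The set $I_M\cup I_{M+Q+R}$ is again $I_{m-q-r}\cup I_m$, and $M<m\le\gf(T)$.
\end{itemize}
In both cases $S(Q,R;M)=a_m=A^+(T)$.

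For $A^-(T)$ I would argue the same way with the minimal index $m$ such that $a_m=A^-(T)$. Here $l\in I_{m-r}$ leads to $(Q,R;M)=(-q,r;m-r)$, and $l\in I_{m-q}$ leads to $(q,-r;m-q)$. In each case the pair $\{I_M,I_{M+Q+R}\}$ is $\{I_{m-r},I_{m-q}\}$, which matches \eqref{6.2}, and $S(Q,R;M)=-a_m=-A^-(T)$.

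The main obstacle is bookkeeping rather than any real difficulty. One must check that in each of the four cases the sets $\CN_M$ and $\CN_m$ coincide, so that the conditions ``$l\pm r\in\CN_m$'' and ``$l'\pm Q\in\CN_M$'' translate into each other. One must also keep Step~1's out-of-range coefficients straight.
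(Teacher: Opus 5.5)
Your proposal is correct and follows the paper's proof. Step~2 is exactly the paper's argument: the minimal index $m$, Lemma~\ref{L7}, and the triples $(-q,-r;m)$, $(q,r;m-q-r)$, $(-q,r;m-r)$, $(q,-r;m-q)$, each with $\CN_M=\CN_m$. Your Step~1 also proves the reverse inequality that the paper leaves implicit, and you correctly note that $M+q+r$ may exceed $\gf(T)$ but stays below $pqr$, where \eqref{4.6} still gives the true (zero) coefficient.
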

\begin{proof}
Consider $A^+(T)$ first. To prove \eqref{6.11}, we need to show that $A^+(T)=S(Q,R;M)$ for some triple $(Q,R;M)\in\CM$ with $\sgn Q=\sgn R$. Let $m$ be the smallest index such that $a_m=A^+(T)$. Apply \refL{7} and consider the $I$-subrange containing $l$ in \eqref{6.1}. If $l\in I_m$, then \eqref{6.3} and \eqref{6.4} are equivalent to \eqref{6.9} and \eqref{6.10} with $M=m$, $Q=-q$ and $R=-r$. This shows that $A^+(T)=S(-q,-r;m)$ with $(-q,-r;m)\in\CM$, as required. In the second case $l\in I_{m-q-r}$, \eqref{6.3} and \eqref{6.4} are equivalent to \eqref{6.9} and \eqref{6.10} with $M=m-q-r$, $Q=q$ and $R=r$, so that $(q,r;m-q-r)\in\CM$, and $A^+(T)=S(q,r;m-q-r)$. This proves \eqref{6.11}.

The same argument, modulo some technical details, also proves \eqref{6.12}. Let $m$ be the smallest index such that $a_m=A^-(T)$, and apply \refL{7}. Consider the $I$-subranges containing $l$ in \eqref{6.2}. In the two cases, $l\in I_{m-r}$ and $l\in I_{m-q}$, \eqref{6.3} and \eqref{6.4} are equivalent to \eqref{6.9} and \eqref{6.10} with $(Q,R;M)=(-q,r;m-r)$ and $(Q,R;M)=(q,-r;m-q)$, respectively. In either case $(Q,R;M)\in\CM$, $-A^-(T)=S((Q,R;M)$, and \eqref{6.12} follows.
\end{proof}

\refL{8} refines the strategy for determining $A^\pm(T)$ that was indicated following \refL{7}, which now reads: evaluate $S(Q,R;M)$ for $(Q,R;M)\in\CM$ and apply \refL{8}. Unfortunately, we cannot evaluate $S(Q,R;M)$ in one go due to the fact that \eqref{6.8}-\eqref{6.10} determine three fundamentally distinct type of ranges $\CN_M$. These correspond to the three cases: (Case 1) $l=l'$; (Case 2) $l\neq l'$ and $l'\in I_M$; and (Case 3) $l'\in I_{M+Q+R}$. Each of these cases requires a separate treatment, and the natural order of these cases given above happens to coincide with their order of difficulty.

Before we start evaluating $S(Q,R;M)$ we need to fully digest and make more explicit our assumptions in \eqref{6}. To that end, we put
\begin{equation}\label{6.15}
q=\vta p+t \qquad[2\le t\le p-1,\ \vta\ge p]
\end{equation}
and, recalling that $s\in[2,p-1]$ stands for $st\equiv1\pmod p$, we write
\begin{equation}\label{6.16}
st=\eta p+1, \quad\text{so that,}\quad 0<\eta<t.
\end{equation}
Congruence $rt\equiv1\pmod{pq}$ implies that $rq\equiv1\pmod p$ and $rp\vta\equiv-1\pmod q$. This and \eqref{4.3} with $n=1$ give $x_1=1$ and $y_1=q-\vta$. Substituting these values into \eqref{4.7} yields
\begin{equation}\label{6.17}
x_n=\la n\ra_p \quad\text{and}\quad y_n=\la-\vta n\ra_q.
\end{equation}
Furthermore, \eqref{4.9} and \eqref{4.8} become
\begin{equation}\label{6.18}
f(n)=x_nq+y_np\equiv nt\pmod{pq},
\end{equation}
\eqref{4.10} becomes
\begin{equation}\label{6.19}
f(n)=\la nt\ra_{pq} \quad\text{or}\quad f(n)=\la nt\ra_{pq}+pq,
\end{equation}
and \refL{3} now reads: For $n<pqr$, we have
\begin{equation}\label{6.20}
\chi(n)=1 \quad\text{if and only if}\quad f(n)=\la nt\ra_{pq}\le\lfloor n/r\rfloor.
\end{equation}
From \eqref{6.17}, the values
\begin{equation}\label{6.21}
x_q=t,\quad y_q=0,\quad x_r=s
\end{equation}
are immediate. Parameter $y_r$ satisfies the identity
\begin{equation}\label{6.22}
x_r\vta+y_r+\eta=q.
\end{equation}
To see this, we observe that $f(r)\equiv1\pmod{pq}$, whence
\begin{equation}\label{6.23}
x_rq+y_rp=1+pq.
\end{equation}
But, by \eqref{6.15} and \eqref{6.16},
\[  x_rq+y_rp=x_r(\vta p+t)+y_rp=p(x_r\vta+y_r+\eta)+1,  \]
and \eqref{6.22} follows.

Note that \eqref{6.22} features the combination $(q-y_r)$. This combination will occur frequently enough that it will be convenient to put $y'_r=q-y_r$, so, for example, \eqref{6.22} may be written as $y'_r=x_r\vta+\eta$. More generally, we introduce the notation
\begin{equation}\label{6.24}
x'_Q\coloneq p-x\sq,\quad x'_R\coloneq p-x\sr\quad\text{and}\quad y'_R\coloneq q-y\sr.
\end{equation}
Note that $y_{\pm q}=0$, so there is no need for $y'_Q$ term. We will also find it convenient to put
\begin{equation}\label{6.25}
\ep\sq\coloneq\sgn(Q) \quad\text{and}\quad \ep\sr\coloneq\sgn(R).
\end{equation}
With this terminology in place, let us record the following extensions of \eqref{6.23} and \eqref{6.22}.

\begin{lemma}\label{L9}
We have,
\begin{equation}\label{6.26}
x\sr q+y\sr p=\ep\sr+pq
\end{equation}
and
\begin{equation}\label{6.27}
x\sr\vta+y\sr+\eta\sr=q,
\end{equation}
where $\eta_r=\eta$ and $\eta_{-r}=t-\eta$, so that $0<\eta\sr<t$, by \eqref{6.16}. Furthermore,
\begin{equation}\label{6.28}
y'_R=x\sr\vta+\eta\sr \quad\text{and}\quad y\sr=x'_R\vta+t-\eta\sr
\end{equation}
\end{lemma}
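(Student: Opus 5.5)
We need Lemma 9: for $R=\pm r$, $x\sr q+y\sr p=\ep\sr+pq$, $x\sr\vta+y\sr+\eta\sr=q$, and the $y'_R$ formulas. The plan is to treat $R=r$ and $R=-r$ separately, since everything for $R=r$ is already in hand.

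For $R=r$ there is nothing new. Identity \eqref{6.26} is \eqref{6.23}, and \eqref{6.27} is \eqref{6.22} with $\eta_r=\eta$. The first half of \eqref{6.28} is just $y'_r=q-y_r$ rewritten by \eqref{6.27}. For the second half, write $y_r=q-x_r\vta-\eta$ and substitute $q=\vta p+t$ from \eqref{6.15}. This gives $y_r=(p-x_r)\vta+t-\eta=x'_r\vta+t-\eta_r$.

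For $R=-r$, first determine $x_{-r}$ and $y_{-r}$ from \eqref{6.17}: $x_{-r}=\la -r\ra_p=p-x_r$ and $y_{-r}=\la \vta r\ra_q=q-y_r$. Here I need $x_r\ne0$ and $y_r\ne0$. The first holds since $x_r=s\ge2$. The second holds because $y_r=0$ would give $f(r)=x_rq\equiv1\pmod{pq}$, forcing $q\mid1$. Then
\[ x_{-r}q+y_{-r}p=2pq-(x_rq+y_rp)=2pq-(1+pq)=pq-1, \]
which is \eqref{6.26} with $\ep\sr=-1$. For \eqref{6.27}, compute
\[ x_{-r}\vta+y_{-r}=(p-x_r)\vta+q-y_r=p\vta+q-(q-x_r\vta-\eta)-x_r\vta+\ldots \]
More directly, by \eqref{6.22}, $x_{-r}\vta+y_{-r}=p\vta+q-x_r\vta-y_r=p\vta+\eta$. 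Since $p\vta=q-t$, this equals $q-(t-\eta)$, giving $\eta_{-r}=t-\eta$. The bound $0<\eta\sr<t$ follows from $0<\eta<t$ in \eqref{6.16}.

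Finally, \eqref{6.28} for general $R$ follows uniformly from \eqref{6.27}. Indeed, $y'_R=q-y\sr=x\sr\vta+\eta\sr$. Also $y\sr=q-x\sr\vta-\eta\sr=(p-x\sr)\vta+t-\eta\sr$, using $q=p\vta+t$. The only mildly delicate step is confirming the nonvanishing of $x_r$ and $y_r$ so that the residues of $-r$ are the complements. Everything else is bookkeeping.
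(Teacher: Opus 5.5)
Your proof is correct and follows the paper's argument. The case $R=r$ is read off from \eqref{6.22}--\eqref{6.23}, the case $R=-r$ uses $x_{-r}=p-x_r$ and $y_{-r}=q-y_r$ together with $q=\vta p+t$, and \eqref{6.28} is \eqref{6.27} rearranged. The differences are cosmetic: the paper gets \eqref{6.26} for $R=-r$ from the congruence $f(R)\equiv\ep\sr\pmod{pq}$ rather than by complementing, and you make explicit the nonvanishing of $x_r$ and $y_r$ that the paper uses tacitly. You should also delete the abandoned ``$+\ldots$'' computation before ``More directly''.
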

\begin{proof}
Identity \eqref{6.26} contains \eqref{6.23} and follows from the congruence $f(R)\equiv\ep\sr\pmod{pq}$. For $R=r$, \eqref{6.27} is the same as \eqref{6.22}. For $R=-r$, the left side of \eqref{6.27} is
\[
(p-x_r)\vta+q-y_r+t-\eta=2q-(x_r\vta+y_r+\eta)=q,
\]
by \eqref{6.17}, \eqref{6.15} and \eqref{6.22}.

The first identity in \eqref{6.28} is equivalent to \eqref{6.27}. To get the second, we multiply the first by $-1$ and add $q$ to both sides.
\end{proof}

Let $l\le\gf(T)$ denote an arbitrary multiple of $r$, so that $lt\equiv l/r\pmod{pq}$, and observe that, by \eqref{6.19} and \eqref{6.20} with $n=l$,
\[  \chi(l)=1 \quad\text{if and only if}\quad f(l)=l/r.  \]
To put this another way,
\begin{equation}\label{6.29}
f(l)=\begin{cases}
l/r, &\text{if } \chi(l)=1 \\
l/r+pq, &\text{if } \chi(l)=0,
\end{cases}\end{equation}
which will serve as a convenient reference. We will also find it very useful to record the following evaluation.

\begin{lemma}\label{L10}
Let $l\le\gf(T)$ be an arbitrary multiple of $r$ (so $\chi(l)=0$ or 1). Then for $0<i<p$, we have
\[  \chi(l+i)=0.  \]
\end{lemma}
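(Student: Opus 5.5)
The plan is to apply the criterion \eqref{6.20} directly to $n=l+i$ and show that it fails, using the size restriction $l\le\gf(T)$ to rule out any wrap-around modulo $pq$.

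Write $k=l/r$, so $k$ is a nonnegative integer. Since $l\le\gf(T)=(p-1)(q-1)(r-1)$, we get
\[
k\le (p-1)(q-1)\frac{r-1}{r}<(p-1)(q-1).
\]
Also $n=l+i<\gf(T)+p<pqr$, so \eqref{6.20} applies. Because $0<i<p<r$, we have $\lfloor n/r\rfloor=k$. Moreover $lt\equiv k\pmod{pq}$, since $rt\equiv1\pmod{pq}$, and hence
\[
\la nt\ra_{pq}=\la k+it\ra_{pq}.
\]
By \eqref{6.20}, $\chi(n)=1$ would therefore force $\la k+it\ra_{pq}\le k$.

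Now use the sizes of the two terms. From \eqref{6.15} we have $2\le t\le p-1$, so $1\le it\le(p-1)^2<q$, using $q>p^2$ from \eqref{6}. Hence $it<pq$. If $k+it<pq$, then $\la k+it\ra_{pq}=k+it>k$, and the inequality above fails.

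The only remaining possibility is $k+it\ge pq$, where the residue drops to $k+it-pq<k$. This is the one step needing care, since here the residue really is smaller than $k$. I would exclude it by size: it would give
\[
k\ge pq-it>pq-q\ge pq-p-q+1=(p-1)(q-1),
\]
which contradicts $k<(p-1)(q-1)$. Thus $\la nt\ra_{pq}>k=\lfloor n/r\rfloor$ in every case, and $\chi(l+i)=0$ by \eqref{6.20}. The argument does not depend on whether $\chi(l)$ is $0$ or $1$.
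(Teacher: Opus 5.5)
Your proof is correct and follows the paper's argument. Both use \eqref{6.20} to reduce the claim to comparing $\la (l+i)t\ra_{pq}=\la l/r+it\ra_{pq}$ with $\lfloor (l+i)/r\rfloor=l/r$, and both use $l/r<(p-1)(q-1)$ and $0<it\le(p-1)^2<q$ to see that no reduction modulo $pq$ occurs. The only cosmetic difference is that you exclude the wrap-around case by contradiction, whereas the paper notes directly that $l/r+it<(p-1)(q-1)+q<pq$.
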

\begin{proof}
By \eqref{6.20}, our claim is equivalent to the assertion that
\[  f(l+i)>\Big\lfloor\frac{l+i}r\Big\rfloor=\frac lr \qquad[0<i<p].  \]
Now, by \eqref{6.18},
\[  f(l+i)\equiv(l+i)t\equiv l/r+it\pmod{pq}.  \]
But $l/r<(p-1)(q-1)$, since $l\le\gf(T)$, and $0<it\le(p-1)^2<q$, whence the right side of this congruence is a number in $(0,pq)$. Therefore, by \eqref{6.19}, $f(l+i)\ge l/r+it$, and the claim follows.
\end{proof}

\subsection{Evaluation of $S(Q,R;M)$: (Case 1) $l=l'$}
\begin{lemma}\label{L11}
Let $(Q,R;M)\in\CM$ and assume that $l=l'$, so that $l+Q\in\CN_M$ and \eqref{6.9} and \eqref{6.10} read
\[  \chi(l)=1 \quad\text{and}\quad \chi(l+R)=\chi(l+Q)=0.  \]
Since $l$ is a multiple of $qr$, write $l=aqr$. Then $a$ must satisfy the constraints
\begin{equation}\label{7.1}
a\le x'_R-1, x'_Q-1,
\end{equation}
and the bound
\begin{equation}\label{7.2}
S(Q,R;M)\le\min(a,x\sr)+1
\end{equation}
holds. Moreover, for every integer $a$ satisfying \eqref{7.1} there is a triple $(Q,R;M)$ with $l=aqr$ such that \eqref{7.2} holds with equality. In particular, we have
\begin{equation}\label{7.3}
\max_{\substack{(Q,R;M)\in\CM\\l=l'}}S(Q,R;M)=\min(x\sr+1, x'_R, x'_Q).
\end{equation}
\end{lemma}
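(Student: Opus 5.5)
The plan is to work entirely with the explicit description of $\chi$ from \eqref{6.17}--\eqref{6.20} and the coordinates $(x_n,y_n)$. The multiple $l$ of $r$ is also a multiple of $q$, so it is a multiple of $qr$; write $l=aqr$. By \eqref{4.2} we have $x_l=a$ and $y_l=0$, and $\chi(l)=1$ just says $f(l)=aq=l/r$, consistent with \eqref{6.29}.

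\emph{Step 1: the constraints \eqref{7.1}.} I will derive these from $\chi(l+R)=\chi(l+Q)=0$. By \eqref{6.26} and \eqref{6.29}, adding $R$ to $l$ shifts the coordinates additively. The $x$-coordinate becomes $a+x\sr$, taken modulo $p$, and the $y$-coordinate becomes $y\sr$. If no reduction modulo $p$ occurs, then $f(l+R)=aq+x\sr q+y\sr p=l/r+\ep\sr+pq$, which is of the ``$+pq$'' type, so $\chi(l+R)=0$. If a wrap occurs ($a+x\sr\ge p$), then $f$ drops by $pq$ and, by \eqref{6.20}, $\chi(l+R)=1$. Hence $\chi(l+R)=0$ is equivalent to $a\le x'_R-1$. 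The same computation with $x\sq=\la Q\ra_p$ and $y_{\pm q}=0$ (from \eqref{6.21}) gives $a\le x'_Q-1$.

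\emph{Step 2: evaluating $S$.} I will split $S(Q,R;M)$ into its four window sums over $I_M$, $I_{M+Q}$, $I_{M+R}$ and $I_{M+Q+R}$.

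By \refL{10}, every term at a point $l'+i$ with $0<i<p$ lying just above a multiple $l'$ of $r$ vanishes. The remaining points have the form $l-i$, $l+Q-i$, $l+R-i$ and $l+Q+R-i$ with $0\le i<p$, in the window determined by the offset $j=M-l\in[0,p)$.

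For each such point I compute $f$ via \eqref{6.18}: $f(l+X-i)\equiv(l+X)t-it$. Then \eqref{6.20} becomes a condition on the $x$-coordinate $\la a+x_X-i\ra_p$, namely whether subtracting $i$ causes a wrap modulo $p$. The answer is that each window contributes the count of $i$ in a subinterval of $[0,p-1-j]$ cut out by $a$, $a+x\sr$, $a+x\sq$ and $a+x\sr+x\sq$. Step 1 guarantees that no wraps occur at $i=0$.

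The alternating sum of these four counts should telescope to $\min(a,x\sr,\,\cdot\,)+1$ minus a nonnegative defect depending on $j$. This gives \eqref{7.2}, with equality for a suitable offset: I expect $j=0$, or $j$ chosen so that the window just covers $i\le\min(a,x\sr)$.

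\emph{Step 3: attainment and \eqref{7.3}.} For each $a$ satisfying \eqref{7.1}, I will take $l=aqr$, the sign pair $(Q,R)$ in question, and $M=l+j$ with the optimal $j$. I then check $M\le\gf(T)$ using $a\le p-2$ and $r,q$ large, and check that the hypotheses defining $\CM$ hold; these are Step 1 together with $\chi(l)=1$, the latter holding since $a<p$. Maximizing $\min(a,x\sr)+1$ over $0\le a\le\min(x'_R,x'_Q)-1$ gives $\min(x\sr+1,x'_R,x'_Q)$, which is \eqref{7.3}.

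The main obstacle is Step 2. The wrap-around bookkeeping modulo $p$, for the four shifted windows and for both signs of $Q$ and $R$, has to be organized so that the cancellation yielding $\min(a,x\sr)+1$ is transparent. I would first treat $Q=q$, $R=r$ carefully and then argue that the other sign choices differ only through the relabelling $x\sr\leftrightarrow x'_R$ already built into the notation \eqref{6.24}.
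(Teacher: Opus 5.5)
Your Steps 1 and 3 are correct and agree with the paper: \eqref{7.1} comes from whether the $x$-coordinate wraps at $l+R$ and at $l+Q$, and the final maximization over $a$ is the same. The gap is in Step 2, which carries the whole content of the lemma. The mechanism you describe there gives the wrong answer for the window $I_{M+R}$. For $0\le i<p$ the coordinates of $l+R-i$ are $\la a+x_R-i\ra_p$ and $\la y_R+\vta i\ra_q$, and it is the \emph{$y$-coordinate} that decides $\chi$. By \eqref{6.28}, $q-y_R=x_R\vta+\eta_R$ with $0<\eta_R<t<\vta$ (this is where $q>p^2$ is used). So $y_R+\vta i$ wraps modulo $q$ exactly when $i\ge x_R+1$. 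When the $y$-coordinate wraps and the $x$-coordinate does not, \eqref{6.26} gives $f(l+R-i)=aq+\ep_R-it\le\lfloor(l+R-i)/r\rfloor$; in every other case $f$ carries an extra $pq$. Hence $\chi(l+R-i)=1$ exactly for $x_R+1\le i\le a+x_R$. The breakpoint $x_R$ is what caps $S$ at $\min(a,x_R)+1$, and it is absent from your list $a$, $a+x_R$, $a+x_Q$, $a+x_Q+x_R$. For instance, take $p=3$, $q=11$, $r=50$ (so $t=s=2$), $(Q,R)=(-q,-r)$ and $a=1$. Then $\chi(498)=\chi(l+R-2)=1$, although $\la 2-i\ra_3$ never wraps for $0\le i\le 2$. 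If you treat the $R$-window as governed only by $x$-wraps, you would conclude that $S$ can reach $a+1$, which is false when $a>x_R$. The missing idea is the identity \eqref{6.28} together with $\vta>\eta_R$.

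Second, \refL{10} only controls points just above multiples of $r$, i.e.\ the parts of $I_M$ and $I_{M+R}$ above $l$ and $l+R$. Since $l+Q$ and $l+Q+R$ are not multiples of $r$, your reduction drops the points $l+Q+i$ and $l+Q+R+i$ with $0<i\le M-l$ without justification. The paper treats these windows separately and shows $\chi\equiv0$ on all of $(l+Q-p,l+Q+p)$. Here $f(l+Q+i)\equiv(a+x_Q)q+it\pmod{pq}$, and since $a+x_Q<p$ and $(p-1)^2<q$, this residue lies strictly between $aq$ and $pq$. Hence it exceeds $\lfloor(l+Q+i)/r\rfloor\le aq$. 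The same argument with $\ep_R$ added handles $l+Q+R$. Consequently $S(Q,R;M)=\sum_{n\in I_M}(\chi(n)-\chi(n+R))$. Combined with $\chi(l-i)=1\iff 0\le i\le a$, this is maximized by $M=l+p-1-a$. Your first guess $j=0$ (i.e.\ $M=l$) always gives $S=1$, because that window contains all $a$ points $l+R-i$ with $x_R+1\le i\le a+x_R$.
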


\begin{proof}
In addition to integer intervals such as $I_M$, we need to consider larger ranges of the form
\[  \CR_1(m)\coloneq(m-p,m+p)\cap\N,  \]
for certain special values of arguments $m$, such as $m=l$. Thus our first step is to evaluate $\chi$ on $\CR_1(l)$. In doing so we evaluate $\chi$ on every possible $I_M$ containing $l$.

We will show that, for $|i|<p$,
\begin{equation}\label{7.4}
\chi(l+i)=1 \quad\text{if and only if}\quad -a\le i\le0.
\end{equation}
By \eqref{6.20}, this is equivalent to showing that
\begin{equation}\label{7.5}
f(l+i)\le\Big\lfloor\frac{l+i}r\Big\rfloor \quad\text{if and only if}\quad -a\le i\le0.
\end{equation}
Note that the subrange corresponding to $0<i<p$ has already been dealt with in \refL{10}. Observe that $x_l=a$ and $y_l=0$, and that, by\eqref{6.17} and \eqref{6.18},
\[
f(l+i)=\la x_l+i\ra_pq+\la y_l-\vta i\ra_qp=\la a+i\ra_pq+\la-\vta i\ra_qp.
\]
But, by \eqref{6.15}, $(a+i)q-\vta ip=aq+it$, and we conclude that
\begin{equation}\label{7.6}
f(l+i)=\begin{cases}
aq+it, &\text{for } -a\le i\le0\\
aq+it+pg, &\text{for } -p<i<-a.  \end{cases}
\end{equation}
From this \eqref{7.5}/\eqref{7.4} follow since $\lfloor\frac{l+i}r\rfloor=aq+\lfloor\frac ir\rfloor$.

Next we consider the implication of the requirement $\chi(l+R)=0$. Definition of $f$, \eqref{6.18} and \eqref{6.17}, gives
\[  f(l+R)=\la a+x\sr\ra_pq+y\sr p,  \]
and \eqref{6.29}, with $l$ replaced by $l+R$, \eqref{6.25} and \eqref{6.26} give
\[  f(l+R)=aq+\ep\sr+pq=(a+x\sr)q+y\sr p.  \]
It follows that $a+x\sr<p$, establishing the first claim in \eqref{7.1}.

We are now ready to evaluate $\chi$ on $\CR_1(l+R)$. We will show that 
\begin{equation}\label{7.7}
\chi(l+R+i)=1 \quad\text{if and only if}\quad -a-x\sr\le i\le-x\sr-1.
\end{equation}
By \eqref{6.20}, this is equivalent to showing that
\begin{equation}\label{7.8}
f(l+R+i)\le\Big\lfloor\frac{l+R+i}r\Big\rfloor \quad\text{if and only if}\quad -a-x\sr\le i\le-x\sr-1.
\end{equation}
Once again, by \refL{10}, it remains to consider the subrange of $\CR_1(l+R)$ corresponding to $-p<i<0$. Here we have the following list of ingredients.
\begin{align*}
\bullet\quad &\Big\lfloor\frac{l+R+i}r\Big\rfloor=aq+\ep\sr-1, \qquad\text{by \eqref{6.25}};\\[4pt]
\bullet\quad &f(l+R+i)=\la a+x\sr+i\ra_pq+\la y\sr-\vta i\ra_qp, \qquad \text{by \eqref{6.18} and \eqref{6.17}};\\[4pt]
\bullet\quad &\la a+x\sr+i\ra_p=\begin{cases}
a+x\sr+i, &\text{for } -a-x\sr\le i<0\\
a+x\sr+i+p, &\text{for } -p<i<-a-x\sr.  \end{cases} \\[4pt]
\bullet\quad &\la y\sr-\vta i\ra_q=\begin{cases}
y\sr-\vta i, &\text{if }  y\sr-\vta i<q \\
y\sr-\vta i-q, &\text{if }  y\sr-\vta i\ge q.  \end{cases}\\[4pt]
\bullet\quad &(a+x\sr+i)q+(y\sr-\vta i)p=aq+\ep\sr+it+pq, \qquad\text{by \eqref{6.15} and \eqref{6.26}}.
\end{align*}
This list shows that the inequality $f(l+R+i)\le\lfloor\frac{l+R+i}r\rfloor$ holds if and only if
\[  f(l+R+i)=aq+\ep\sr+it,  \]
which, in turn, holds if and only if
\[  -a-x\sr\le i\le-(q-y\sr)/\vta=-y'_R/\vta=-x\sr-\eta\sr/\vta,  \]
where the last equality holds by \eqref{6.28}. We complete the proof of \eqref{7.8}/\eqref{7.7} by recalling that $\eta\sr<t<p$ (see \eqref{6.27}) and $\vta\ge p-1$, since $q>p^2$.

Next we consider the implications of the requirement $\chi(l+Q)=0$. By \eqref{6.20}, this means that
\[  f(l+Q)>\Big\lfloor\frac{l+Q}r\Big\rfloor=aq+\Big\lfloor\frac{\ep\sq}r\Big\rfloor.  \]
But $l+Q$ is a multiple of $q$, and we conclude that
\[  f(l+Q)=\la a+x\sq\ra_pq=(a+x\sq)q,  \]
that is $a+x\sq<p$. This completes the proof of \eqref{7.1}.

We are now ready to evaluate $\chi$ on $\CR_1(l+Q)$. We will show that
\begin{equation}\label{7.9}
f(l+Q+i)>\Big\lfloor\frac{l+Q+i}r\Big\rfloor \qquad[|i|<p],
\end{equation}
from which we deduce, by \eqref{6.20}, that $\chi\equiv0$ in this range. Now, since $l$ and $Q$ are multiples of $q$, we have, by \eqref{6.18} and \eqref{6.17},
\begin{equation}\label{7.10}
f(l+Q+i)=\la a+x\sq+i\ra_pq+\la-\vta i\ra_qp.
\end{equation}
Observe that, by \eqref{6.15}, we have
\begin{equation}\label{7.11}
(a+x\sq+i)q-\vta ip=(a+x\sq)q+it,
\end{equation}
and that
\[  (a+x\sq)q+it\le(p-1)q+(p-1)^2<pq,  \]
while
\[  (a+x\sq)q+it\ge aq+q-(p-1)^2>aq.  \]
Therefor, by \eqref{7.10}, \eqref{7.11} and \eqref{6.19}, we see that
\[  f(l+Q+i)\ge aq+x\sq q+it>aq.  \]
On the other hand,
\[  \Big\lfloor\frac{l+Q+i}r\Big\rfloor=aq+\Big\lfloor\frac{\ep\sq}r\Big\rfloor\le aq,  \]
and \eqref{7.9} follows.

We now address the remaining range $\CR_1(l+Q+R)$ and show that $\chi\equiv0$ here. This takes the form of showing that
\begin{equation}\label{7.12}
f(l+Q+R+i)>\Big\lfloor\frac{l+Q+R+i}r\Big\rfloor \qquad[|i|<p].
\end{equation}
Our argument here is essentially the same as for the range $\CR_1(l+Q)$ and we simply list the ingredients of the present case:
\begin{align*}
\bullet\quad &f(l+Q+R+i)=\la a+x\sq+x\sr+i\ra_pq+\la y\sr-\vta i\ra_qp; \\[4pt]
\bullet\quad &(a+x\sq+x\sr+i)q+(y\sr-\vta i)p=(a+x\sq)q+\ep\sr+it+pq; \\[4pt]
\bullet\quad &(a+x\sq)q+\ep\sr+it\le(p-1)q+1+(p-1)^2<pq; \\[4pt]
\bullet\quad &(a+x\sq)q+\ep\sr+it\ge aq+\ep\sr+q-(p-1)^2>aq+\ep\sr; \\[4pt]
\bullet\quad &f(l+Q+R+i)\ge(a+x\sq)q+\ep\sr+it>aq+\ep\sr; \\[4pt]
\bullet\quad &\Big\lfloor\frac{l+Q+R+i}r\Big\rfloor=aq+\ep\sr+\Big\lfloor\frac{\ep\sq}r\Big\rfloor\le aq+\ep\sr;\\[4pt]
\bullet\quad &\text{\eqref{7.12} now follows.}
\end{align*}

We are now ready to evaluate $S(Q,R;M)$. Since $l\in I_M$, $M$ satisfies $l\le M<l+p$ and $I_M\subset\CR_1(l)$. Therefore, the forgoing analysis applies to $S(Q,R;M)$. In particular, the inequality \eqref{7.2} follows from \eqref{6.5}, \eqref{7.4}, \eqref{7.7}, \eqref{7.9} and \eqref{7.12}. Moreover, the choice $M=l+p-a-1$ yields
\[  S(Q,R;M)=\sum_{l-a\le n\le l}\bigl(\chi(n)-\chi(n+R)\bigr)=\min(a,x\sr)+1,  \]
completing the proof of \eqref{7.2}. Finally, \eqref{7.3} follows form \eqref{7.2} and \eqref{7.1}.
\end{proof}

\subsection{Evaluation of $S(Q,R;M)$: (Case 2) $l'\in I_M,\ l'\ne l$}
\begin{lemma}\label{L12}
Let $(Q,R;M)\in\CM$ and assume that $l'\in I_M$ but $l'\ne l$, so that $l+Q\in\CN_M$ and \eqref{6.9}, \eqref{6.10} read
\[  \chi(l)=\chi(l')=1 \quad\text{and}\quad \chi(l+R)=\chi(l'+Q)=0.  \]
In this case $S(Q,R;M)\le0$, and
\[  \max_{\substack{(Q,R;M)\in\CM\\l'\in I_M,\,l\ne l'}}S(Q,R;M)=0.  \]
\end{lemma}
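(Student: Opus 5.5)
We will follow the template of the proof of \refL{11}: first determine $\chi$ on the four ranges $\CR_1(l)$, $\CR_1(l+R)$, $\CR_1(l')$, $\CR_1(l'+Q)$ (together with $\CR_1(l+Q)$ and $\CR_1(l+Q+R)$ as needed), and then read off $S(Q,R;M)$ from \eqref{6.5}. Write $l=aqr$ is no longer available; instead $l=br$ with $x_l=\la b\ra_p$-type data, and $l'=cq$. Since $l,l'\in I_M$ and $l\neq l'$, we have $l'=l+j$ with $0<|j|<p$. \refL{10} immediately forces $j<0$, because $\chi(l+j)=0$ for $0<j<p$ while $\chi(l')=1$. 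So $l'<l$, both lie in $I_M$, and the window $I_M$ contains $l'$ and $l$ with $l'$ to the left.

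The first step is to compute $f$ on $\CR_1(l)$ exactly as in \eqref{7.6}. Using \eqref{6.18}, $f(l+i)\equiv l/r+it\pmod{pq}$, and by \eqref{6.17} the values $x_{l+i}=\la x_l+i\ra_p$, $y_{l+i}=\la y_l-\vta i\ra_q$. The condition $\chi(l+j)=1$ at $j=l'-l$, combined with $y_{l'}=0$, should pin down $y_l=-\vta j$ (so $y_l\equiv0\pmod\vta$ essentially) and give $\chi(l+i)=1$ exactly for $i$ in an interval $[j-\alpha,0]$ intersected with the set where $y_l-\vta i<q$, i.e.\ $\chi(l+i)=1$ iff $x_l\ge -i$ and $y_l-\vta i<q$; since $y_l=\vta|j|$ this is roughly $-x_l\le i\le 0$ with the extra constraint coming from $l'$. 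Similarly $\chi(l'+i)$ on $\CR_1(l')$ is described as in \eqref{7.4}. Next, as in \refL{11}, the requirements $\chi(l+R)=0$ and $\chi(l'+Q)=0$ give constraints $x_l+x\sr<p$ and $x_{l'}+x\sq<p$, after which the computations parallel to \eqref{7.7}, \eqref{7.9}, \eqref{7.12} show that $\chi(l+R+i)=1$ exactly on a shifted interval of the same length as the $\chi=1$ run near $l$ (shifted left by $x\sr$), and $\chi\equiv0$ on $\CR_1(l'+Q)$ and $\CR_1(l+Q+R)$.

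The key point, and the main obstacle, is the comparison: the positive contribution from $I_M$ is the number of $n\in I_M$ with $\chi(n)=1$, which consists of the run ending at $l$ (which, because $y_l=\vta|j|>0$, terminates at or before $l'$'s run rather than extending $x_l+1$ steps) and possibly $l'$'s run; the negative contribution $-\chi(n+R)$ is a run in $I_{M+R}$ which, by \eqref{6.28} and $y'_R=x\sr\vta+\eta\sr$, should cover the translate of everything with $\chi=1$ in $I_M$, given $\vta\ge p-1$ (so that the fractional shift $\eta\sr/\vta$ never costs an integer). I would try first to prove directly that $n\in I_M$, $\chi(n)=1$ implies $\chi(n+R)=1$, by writing $f(n+R)=f(n)+\ep\sr$ when no wraparound occurs and checking via \eqref{6.26} that $x_n+x\sr<p$ and $y_n+y\sr<q$ follow from $x_l+x\sr<p$ and the structure of the runs; this yields $S(Q,R;M)\le0$ since the $Q$-terms vanish. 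Finally equality $0$ is attained, e.g.\ by choosing $M$ so that $I_M$ contains just $l'$ and $l$'s contributions matched, or simply by exhibiting one admissible triple (taking $M$ just at $l$ with $I_M\ni l'$) where both sides balance, which is routine once the run descriptions are in hand.
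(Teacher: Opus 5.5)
You correctly get $l'<l$ from \refL{10}, but after that the plan rests on a wrong picture of $\chi$ near $l$, and the comparison step is false.

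\textbf{The local structure of $\chi$.} With $l'=l+j$, $j<0$, \eqref{6.17} gives $0=y_{l'}=\la y_l-\vta j\ra_q$. Hence $y_l=q+\vta j=q-\vta|j|$, not $-\vta j$.

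Write $b=p-|j|$ and $a=x_l$. Then $y_l=\vta b+t$, and $f(l)=l/r$ reads $l/r=(a+b)q+(p-b)t<pq$, which forces $a+b<p$. Consequently, on $(l-p,l]$ the set where $\chi=1$ consists of two disjoint runs:
\begin{itemize}
\item $[l-a,l]$, of full length $a+1$ (neither $\la a+i\ra_p$ nor $\la\vta(b-i)+t\ra_q$ wraps);
\item $(l-p,l']$, of length $b$ (both wrap).
\end{itemize}
You cannot obtain the second run by transplanting \eqref{7.4} to $l'$: $l'$ is not a multiple of $r$, and \refL{10} says nothing to its right.

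Moreover, $\chi(l+R)=0$ yields two constraints, not one. The first is $a+x\sr<p$. The second comes from $\vta b+t+y\sr<q$ together with \eqref{6.27}: it is $b<x\sr$, which is missing from your plan. With these, $\chi(l+R+i)=1$ exactly for $-x\sr-a\le i\le -x\sr+b$. This is a single interval of length $a+b+1$, the combined length of the two runs near $l$; it is not a translate of either of them.

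\textbf{The key step fails.} You propose to show that $\chi(n)=1$ implies $\chi(n+R)=1$ for every $n\in I_M$. This is false already at $n=l$: by the definition of $\CM$, $l\in I_M$, $\chi(l)=1$ and $\chi(l+R)=0$. In fact, because $b<x\sr$, it fails at every $n\in[l-a,l]$ with $n>l-x\sr+b$. So $S(Q,R;M)\le0$ cannot come from termwise domination; it is a cancellation of counts.

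What is actually needed is the following.
\begin{itemize}
\item Since $l,l'\in I_M$, we must have $M=l+k$ with $0\le k<b$.
\item $\chi$ vanishes on the windows $I_{M+Q}$ and $I_{M+Q+R}$.
\item The number of $n\in I_M$ with $\chi(n)=1$ is $(b-k)+(a+1)$.
\item The number of $n\in I_M$ with $\chi(n+R)=1$ is $\min(a+b+1,\,p+b-x\sr-k)$. This is $\ge a+b+1-k$ precisely because $a+x\sr<p$.
\end{itemize}
This gives $S(Q,R;M)\le0$, with equality at $M=l$. Your choice $M=l$ for attaining the maximum is therefore right, but the balance there comes from equal total counts, not from a matching of translated runs.
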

\begin{proof}
Since $q\mid l'$, $y_{l'}=0$. It will be convenient to temporarily put $a'=x_{l'}$. Then $l'\equiv a'qr\pmod{pq}$ and $f(l')=a'q$. Since $\chi(l')=\chi(l)=1$, \refL{10} tells us that $l'<l$. Let $j=l-l'$ ($>0$) and put $a=\la a'+j\ra_p$, so that, by \eqref{6.17}, $x_l=a$. Moreover, by \eqref{6.15},
\[  y_l=\la-\vta j\ra_q=q-\vta j=\vta(p-j)+t=\vta b+t,  \]
where $0<b=p-j<p$. Since $\chi(l)=1$, we see that
\begin{equation}\label{8.1}
l=x_lqr+y_lpr=aqr+(\vta b+t)pr
\end{equation}
and
\begin{equation}\label{8.2}
f(l)=aq+(\vta b+t)p=l/r.
\end{equation}
Note that
\begin{equation}\label{8.3}
aq+(\vta b+t)p=aq+b(q-t)+tp=(a+b)q+(p-b)t,
\end{equation}
and since this quantity is $<pq$, we conclude that
\begin{equation}\label{8.4}
0<a+b<p.
\end{equation}
Getting back to $l'$, we write $l'=l-p+b$, replace the temporary letter $a'$ by $x_{l'}=\la a-p+b\ra_p=a+b$, and note that $f(l')=(a+b)q$.

In the present case it will be more convenient to work with the ranges of the form
\[  \CR_2(m)\coloneq(m-p,m+b)\cap\N,  \]
where the parameter $b$ is determined by $l\in I_M$. In particular, note that
\[  (l-p,l+b)=(l'-b,l+b)=(l'-b,l'+p),  \]
so that $\CR_2(l)$ contains every possible $I_M$ containing $l$ given by \eqref{8.1}. Our first task is to evaluate $\chi$ on $\CR_2(l)$. We shall show that, for $l+i\in\CR_2(l)$,
\begin{equation}\label{8.5}
\chi(l+i)=1 \quad\text{if and only if}\quad i\in(-p,-p+b]\cup[-a,0].
\end{equation}
We argue just as in the preceding case covered by \refL{11} and establish the validity of the statement equivalent to \eqref{8.5}, where $\chi(l+i)=1$ is replaced by $f(l+i)\le\lfloor\frac{l+i}r\rfloor$. By \refL{10}, it only remains to consider $-p<i\le0$. In this subrange we have the following ingredients.
\begin{align*}
\bullet\quad &\Big\lfloor\frac{l+i}r\Big\rfloor=\frac lr+\Big\lfloor\frac ir\Big\rfloor.\\[4pt]
\bullet\quad &f(l+i)=\la a+i\ra_pq+\la\vta(b-i)+t\ra_qp. \\[4pt]
\bullet\quad &\la a+i\ra_p=\begin{cases}
a+i, &\text{for } -a\le i\le0\\
a+i+p, &\text{for } -p<i<-a. \end{cases} \\[4pt]
\bullet\quad &\la\vta(b-i)+t\ra_q=\begin{cases}
\vta(b-i)+t, &\text{for } -p+b<i\le0\\
\vta(b-i)+t-q, &\text{for } -p<i\le-p+b. \end{cases} \\[4pt]
\bullet\quad &(a+i)q+(\vta(b-i)+t)p=\frac lr+it, \quad\text{by \eqref{8.2} and \eqref{6.15}}.
\end{align*}
From this list and the condition \eqref{8.4}, the validity of \eqref{8.5} readily follows.

Next we consider the implications of the requirement $\chi(l+R)=0$. On the one hand, by \eqref{8.1}, we have
\begin{equation}\label{8.6}
f(l+R)=\la a+x\sr\ra_pq+\la\vta b+t+y\sr\ra_qp,
\end{equation}
and on the other, by \eqref{6.29}, \eqref{6.25} and \eqref{8.2},
\begin{equation}\label{8.7}
f(l+R)=\frac lr+\ep\sr+pq=(a+x\sr)q+(\vta b+t+y\sr)p.
\end{equation}
It follows that
\begin{equation}\label{8.8}
a+x\sr<p \quad\text{and}\quad \vta b+t+y\sr<q.
\end{equation}
Using \eqref{6.27}, we write
\begin{equation}\label{8.9}
\vta b+t+y\sr=q+\vta(b-x\sr)+t-\eta\sr.
\end{equation}
Recalling that $\vta>t>\eta\sr>0$, we see that the second inequality in \eqref{8.8} is equivalent to the condition $b<x\sr$. It is convenient to rewrite the requirements on $a$ and $b$ \eqref{8.8} in the equivalent form
\begin{equation}\label{8.10}
a+x\sr<p \quad\text{and}\quad b<x\sr.
\end{equation}

We are now ready to evaluate $\chi$ on $\CR_2(l+R)$. We shall show that, for $l+R+i\in\CR_2(l+R)$,
\begin{equation}\label{8.11}
\chi(l+R+i)=1 \quad\text{if and only if}\quad -x\sr-a\le i\le-x\sr+b.
\end{equation}
By \refL{10}, it only remains to consider the subrange corresponding to $-p<i<0$.  In this subrange, we have the following ingredients.
\begin{align*}
\bullet\quad &\Big\lfloor\frac{l+R+i}r\Big\rfloor=\frac lr+\ep\sr-1.\\[4pt]
\bullet\quad &f(l+R+i)=\la a+x\sr+i\ra_pq+\la\vta b+t+y\sr-\vta i\ra_qp, \quad\text{by \eqref{8.6}}. \\[4pt]
\bullet\quad &(a+x\sr+i)q+(\vta b+t+y\sr-\vta i)p=\frac lr+\ep\sr+it+pq, \quad\text{by \eqref{8.7}}. \\[4pt]
\bullet\quad &\la a+x\sr+i\ra_p=\begin{cases}
a+x\sr+i, &\text{if } -x\sr-a\le i<0\\
a+x\sr+i+p, &\text{if } -p<i<-x\sr-a. \end{cases}
\end{align*}
We need a bit more care for the remaining ingredient. By \eqref{8.9}, we have
\[  \la\vta b+t+y\sr-\vta i\ra_q=\la q+\vta(b-x\sr-i)+t-\eta\sr\ra_q.  \]
From this and \eqref{8.10} it is now clear that
\begin{align*}
\bullet\quad &\la\vta b+t+y\sr-\vta i\ra_q=\begin{cases}
\vta b+t+y\sr-\vta i, &\text{if } -x\sr+b<i<0\\
\vta b+t+y\sr-\vta i-q, &\text{if } -p<i\le-x\sr+b. \end{cases}
\end{align*}
From this list \eqref{8.11} follows in the usual way.

Next we consider the implication of the requirement $\chi(l'+Q)=0$. Recall that $x_{l'}=a+b$ and $f(l')=(a+b)q<\frac lr$, so that $f(l'+Q)=\la a+b+x\sq\ra_pq$. Recall also that $l'=l-p+b$, so that $\lfloor\frac{l'+Q}r\rfloor=\frac lr+\lfloor\frac{\ep\sq}r\rfloor$. By \eqref{6.20}, $f(l'+Q)>\frac lr-1$, and we conclude that
\begin{equation}\label{8.12}
a+b+x\sq<p.
\end{equation}

We are now ready to evaluate $\chi$ on $\CR_2(l+Q)$. We will show that
\begin{equation}\label{8.13}
\chi\equiv0 \quad\text{on}\quad \CR_2(l+Q).
\end{equation}
Our argument here is rather similar to our argument in the corresponding range in \refL{11} and does not require great care. Indeed, observe that, by \eqref{6.18},
\[  f(l+Q+i)\equiv l/r+x\sq q+it\pmod{pq},  \]
and that
\[  l/r+x\sq q+it\ge l/r+q-(p-1)^2>l/r,  \]
while, by \eqref{8.2}, \eqref{8.3} and \eqref{8.12},
\[  l/r+x\sq q+it=(a+b+x\sq)q+(p-b+i)t\le(p-1)q+(p-1)^2<pq,  \]
since $i<b$. Therefore, we have that
\[  f(l+Q+i)\ge l/r+x\sq q+it>l/r.  \]
But $\lfloor\frac{l+Q+i}r\rfloor=\frac lr+\lfloor\frac{\ep\sq}r\rfloor$, and \eqref{8.13} follows.

The situation in the remaining range $\CR_2(l+Q+R)$ is essentially identical to the range $\CR_2(l+Q)$. Indeed, from the preceding argument it is clear that
\[  f(l+Q+R+i)\equiv l/r+\ep\sr+x\sq q+it\pmod{pq},  \]
and that
\[  l/r+\ep\sr<l/r+\ep\sr+x\sq q+it<pq.  \]
This yields
\[  f(l+Q+R+i)>\frac lr+\ep\sr\ge\Big\lfloor\frac{l+Q+R+i}r\Big\rfloor,  \]
and we conclude that
\begin{equation}\label{8.14}
\chi\equiv0 \quad\text{on}\quad \CR_2(l+Q+R).
\end{equation}

We are now ready to evaluate $S(Q,R;M)$. As we already mentioned, the forgoing analysis is applicable since $I_M\subset\CR_2(l)$. Recall that $l-p+b=l'\in I_M$. From this it is plain, by \eqref{8.5}, \eqref{8.11}, \eqref{8.13} and \eqref{8.14}, that
\[  S(Q,R;M)\le S(Q,R;l)=0,  \]
as claimed.
\end{proof}

\subsection{Evaluation of $S(Q,R;M)$: (Case 3) $l'\in I_{M+Q+R}$}
It remains to evaluate $S(Q,R;M)$ for $(Q,R;M)\in\CM$, with $l'\in I_{M+Q+R}$. This case is more involved than the preceding cases and we split its discussion into two lemmas.

\begin{lemma}\label{L13}
Let $(Q,R;M)\in\CM$ and assume that $l'\in I_{M+Q+R}$, so that \eqref{6.9} and \eqref{6.10} read
\begin{equation}\label{9.1}
\chi(l)=\chi(l')=1 \quad\text{and}\quad \chi(l+R)=\chi(l'-Q))=0.
\end{equation}
Then $x\sq,x\sr>1$, and $l$ and $l'$ must be of the form
\begin{equation}\label{9.2}
l=aqr+(\vta b+\eta\sr-t)pr \quad\text{and}\quad l'=l+Q+R+p+b-x\sr,
\end{equation}
where $a$ and $b$ satisfy
\begin{equation}\label{9.3}
0\le a<x'_R \quad\text{and}\quad \max(1,x'_Q-a)\le b<x\sr.
\end{equation}
In the opposite direction, if a pair of integers $a$ and $b$ satisfies \eqref{9.3}, which is possible if and only if $x\sq,x\sr>1$, and $l$ and $l'$ are given by \eqref{9.2}, then $l$ and $l'$ satisfy \eqref{9.1}, and $l\in I_M$, $l'\in I_{M+Q+R}$ and $(Q,R;M)\in\CM$, for every
\begin{equation}\label{9.4}
l+p+b-x\sr\le M<l+p.
\end{equation}
\end{lemma}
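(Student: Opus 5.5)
We follow the pattern of Lemmas~11 and~12: parametrize $l$ and $l'$ through their coordinates $(x_n,y_n)$ from \eqref{6.17}, and translate each condition in \eqref{9.1} into arithmetic conditions via \eqref{6.20} and \eqref{6.29}.

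\emph{Relating $l'$ to $l$.} Since $l\in I_M$ and $l'\in I_{M+Q+R}$, we have $l'=l+Q+R+j$ with $|j|<p$. Both $l$ and $l'$ satisfy $\chi=1$, and $l+Q+R$ is a multiple of $r$. Lemma~10 applied to that multiple of $r$ excludes $0<j<p$ unless $j$ is $0$ or negative. More precisely, we expect to show $j=p+b-x\sr$ for some $b$ with $1\le b<x\sr$.

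\emph{Parametrizing $l$.} Write $x_l=a$ and $y_l=y$. Then $\chi(l)=1$ gives $l=aqr+ypr$ and $f(l)=aq+yp=l/r<pq$. The condition $q\mid l'$ means $y_{l'}=0$. Compute $y_{l'}=\la y+y\sr-\vta j\ra_q$ using \eqref{6.17}, noting $y_{\pm q}=0$. Combining this with \eqref{6.28} (the identity $y\sr=x'_R\vta+t-\eta\sr$), one solves for $y$ and finds $y=\vta b+\eta\sr-t$ for some integer $b$ tied to $j$. This gives the form \eqref{9.2}.

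\emph{Deriving the constraints \eqref{9.3}.} These come from the four $\chi$-conditions.
\begin{enumerate}[leftmargin=*]
\item $\chi(l+R)=0$. As in \eqref{8.6}--\eqref{8.10}, we need $f(l+R)=l/r+\ep\sr+pq$ to be achieved without wraparound. This forces $a+x\sr<p$, i.e.\ $a<x'_R$. The $y$-coordinate must then be consistent, which yields $b<x\sr$ using \eqref{6.27}.
\item $\chi(l')=1$. We need $f(l')=x_{l'}q\le\lfloor l'/r\rfloor$, where $x_{l'}=\la a+x\sq+x\sr+j\ra_p$. This should force the residue to be small, and combined with $f(l)$ it gives $b\ge 1$.
\item $\chi(l'-Q)=0$. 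We have $f(l'-Q)=\la x_{l'}-x\sq\ra_p\,q$, compared against $\lfloor (l'-Q)/r\rfloor$. This gives the inequality $a+b\ge x'_Q$.
\end{enumerate}
The claim $x\sq,x\sr>1$ then follows from the nonemptiness of \eqref{9.3}: $b<x\sr$ with $b\ge1$ forces $x\sr\ge2$, and $b\ge x'_Q-a>x'_Q-x'_R$ together with $b<x\sr$ should give $x\sq>1$. The inequality $0<it\le(p-1)^2<q$ from \eqref{6} is used throughout to control carries, exactly as in Lemma~10.

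\emph{Converse.} Given $a,b$ satisfying \eqref{9.3}, define $l,l'$ by \eqref{9.2}. Checking \eqref{9.1} consists of the same residue computations run backwards. The range \eqref{9.4} is exactly the set of $M$ for which $l\in I_M$, i.e.\ $M<l+p$, and $l'\in I_{M+Q+R}$, i.e.\ $M\ge l'-Q-R=l+p+b-x\sr$. Membership in $\CM$ then holds because \eqref{6.9} and \eqref{6.10} are precisely \eqref{9.1}.

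\emph{Main obstacle.} The hardest step is the carry bookkeeping in the $y$-coordinate for $l'$ and $l'-Q$. Signs $\ep\sq,\ep\sr$ and the quantity $\eta\sr$ interact there, and we must determine precisely when $\la\cdot\ra_q$ wraps. I would first treat $Q=q$, $R=r$, using \eqref{6.28} to express everything in units of $\vta$ plus small remainders in $(0,t)$. I would then check that the general case only replaces $\eta$ by $\eta\sr$ and $x_q$ by $x\sq$.
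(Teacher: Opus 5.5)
\textbf{There is a genuine gap.} It sits at the step everything else depends on: the sign of $j$ in $l'=l+Q+R+j$, which determines the exact form of $y_l$.

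Your opening step rests on a false premise. $l+Q+R$ is not a multiple of $r$, since $Q=\pm q$. Its conclusion (ruling out $0<j<p$) is also the opposite of the truth: the target value $j=p+b-x_R$ with $1\le b<x_R$ lies in $(0,p)$.

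Without the sign of $j$, the equation $y_{l'}=\la y_l+y_R-\vta j\ra_q=0$ only yields the congruence $y_l\equiv\vta b+\eta_R-t\pmod q$, with $b=j-x'_R$. For $b\le0$ the representative in $[0,q)$ is a different number (namely $\vta(j+x_R)+\eta_R$ when $-x_R<j\le x'_R$), and this branch must be excluded.

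Your item~2 tries to exclude it with $\chi(l')=1$, but that fails. In this branch, the no-wraparound conditions from $\chi(l+R)=0$ force $-x_R<j<0$. Then, with $L=l+R$, one computes $x_{l'-Q}=a+j+x_R$ and $f(l'-Q)=L/r+jt<L/r$, so $\chi(l'-Q)=1$. Meanwhile $\chi(l')=1$ holds exactly when $a+j+x_R+x_Q\ge p$. So $\chi(l')=1$ does not give $b\ge1$; it is $\chi(l'-Q)=0$ that does.

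This is the paper's first step. Put $L=l+R$ and $L'=l'-Q=L+j$; both lie in $I_{M+R}$ and both have $\chi=0$.
\begin{itemize}
\item $j\ne0$, since otherwise $L$ would be a multiple of $qr$ and then $\chi(L)=1$.
\item $f(L')\equiv L/r+jt\pmod{pq}$, and $\chi(L')=0$ gives $f(L')>L/r$.
\item $q\mid L'$ gives $f(L')=x_{L'}q\le(p-1)q$.
\end{itemize}
If $j<0$, these force $f(L')\ge L/r+jt+pq>pq-q$, a contradiction. So $j>0$, hence $y_L=\vta j$ exactly. Combined with $y_L=y_l+y_R$ (no wraparound, from $f(L)=L/r+pq$), this gives $y_l=\vta b+\eta_R-t$. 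Then $b\ge1$ follows from $y_l\ge0$, and $b<x_R$ from $j<p$. Note that the condition $y_l+y_R<q$ alone gives only $b\le x_R$.

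Your item~3 is likewise misattributed. Once \eqref{9.2} holds, $x_{l'-Q}=a+b$ and $f(l'-Q)=(a+b)q>L/r$, so $\chi(l'-Q)=0$ is automatic and imposes nothing. The constraint $a+b\ge x'_Q$ is exactly the condition that $x_{l'}=\la a+b+x_Q\ra_p$ wraps around modulo $p$, i.e.\ that $\chi(l')=1$.

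Your converse direction and your derivation of the range \eqref{9.4} are fine as described.
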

\begin{proof}
Put $L=l+R$ and $L'=l'-Q$, and observe that $L,L'\in I_{M+R}$, so that $|L-L'|<p$. Note that $L\ne L'$, for otherwise $L$ is a multiple of $qr$, say $L=aqr$, and $\chi(L)=1$, a contradiction. Our first step is to show that $L'>L$. To that end we write $L'=L+B$, so that $|B|<p$, and observe that
\[
\Big\lfloor\frac{L'}r\Big\rfloor=\frac Lr+\Big\lfloor\frac Br\Big\rfloor 
\quad\text{and}\quad f(L')\not\equiv\frac Lr\pmod{pq},
\]
whence $f(L')>\frac Lr$, since $\chi(L')=0$. But
\[  f(L')=f(L+B)\equiv\frac Lr+Bt\pmod{pq},  \]
and we conclude that $f(L')\ge\frac Lr+Bt$. Now, note that:
\begin{itemize}
\item \:$f(L')=x\slp q\le pq-q$ (since $y\slp=0$), and
\item \:$L/r+Bt+pq\ge pq-(p-1)^2>pq-q$.
\end{itemize}
Evidently $f(L')=L/r+Bt>L/r$, so that $B>0$, as claimed.

Let us note two immediate important consequences of the fact that $L'>L$. The first is that, by \eqref{6.17},
\begin{equation}\label{9.5}
y\slp=\la y\sel-\vta B\ra_q=0 \implies y\sel=\vta B.
\end{equation}
The second is the relation
\begin{equation}\label{9.6}
x\slp=\la x\sel+B\ra_p=x\sel+B-p.
\end{equation}
The last equality here readily follows from \eqref{9.5} and the equations
\begin{equation}\label{9.7}
\begin{aligned}
f(L') &=\la x\sel+B\ra_pq+\la y\sel-\vta B\ra_qp=\la x\sel+B\ra_pq \\
&=\frac Lr+Bt
\end{aligned}
\end{equation}
and
\[  f(L)=x\sel q+y\sel p=\frac Lr+pq.  \]

We are now ready to start examining $l$ and its coefficients $x_l$ and $y_l$. Firstly, by our hypothesis and \eqref{4.2}, we have
\[
l=x_lqr+y_lpr \quad\text{and}\quad f(l)=x_lq+y_lp=l/r.
\]
Furthermore,
\begin{equation*}
\begin{aligned}
f(L) &=\la x_l+x\sr\ra_pq+\la y_l+y\sr\ra_qp=\frac Lr+pq\\
&=\frac lr+\ep\sr+pq=(x_l+x\sr)q+(y_l+y\sr)p,
\end{aligned}
\end{equation*}
by \eqref{6.26}. We conclude that
\begin{equation}\label{9.8}
x\sel=x_l+x\sr \quad\text{and}\quad y\sel=y_l+y\sr.
\end{equation}
Recalling that $y\sel=\vta B$ and $y\sr=q-\vta x\sr-\eta\sr$ (see \eqref{6.27}), we deduce the shape of $y_l$:
\begin{equation}\label{9.9}
\begin{aligned}
y_l &=y\sel-y\sr=\vta(B+x\sr)+\eta\sr-q\\
&=\vta(B+x\sr-p)+\eta\sr-t=\vta b+\eta\sr-t,
\end{aligned}
\end{equation}
where we put $b=B+x\sr-p \ (<x\sr)$. With $y_l$ identified and putting $a=x_l$, we now write
\[  l=aqr+(\vta b+\eta\sr-t)pr,  \]
which is the first equality in \eqref{9.2}. The second equality there is immediate since $l'=L'+Q$ and $L'=l+R+p+b-x\sr$. Let us also record the constraints on $a$ and $b$ implied by \eqref{9.8}. Firstly,
\begin{equation}\label{9.10}
a=x_l<p-x\sr=x'_R.
\end{equation}
Secondly, recall that $\eta\sr-t<0$ and $\vta-t>0$ (see \eqref{6.27} and \eqref{6.15}). This means that \eqref{9.9} requires $b\ge1$, so that
\begin{equation}\label{9.11}
1\le b<x\sr.
\end{equation}

It helps to return to the quantities $L$ and $L'$ and rewrite their properties given above in terms of the parameters $a$ and $b$. Thus \eqref{9.8} becomes
\begin{equation}\label{9.12}
x\sel=a+x\sr \quad\text{and}\quad y\sel=\vta B=\vta(p+b-x\sr),
\end{equation}
and \eqref{9.6} and \eqref{9.7} now read
\begin{equation}\label{9.13}
x\slp=a+x\sr+B-p=a+b
\end{equation}
and
\begin{equation}\label{9.14}
f(L')=(a+b)q=\frac Lr+(p+b-x\sr)t.
\end{equation}
Next we consider the implication of the hypothesis $\chi(l')=1$. From the relation
\[  l'=L'+Q=L+p+b-x\sr+Q  \]
we deduce that:
\begin{itemize}
\item \:$\lfloor l'/r\rfloor=L/r+\lfloor \ep\sq/r\rfloor$, and
\item \:$f(l')\not\equiv f(L)\pmod{pq} \implies f(l')<L/r$.
\end{itemize}
Knowing this allows us to determine $x_{l'}$ and $f(l')$ as follows (of course, $y_{l'}=0$). By \eqref{9.13},
\[  x_{l'}=\la x\slp+x\sq\ra_p=\la a+b+x\sq\ra_p,  \]
and we have the evaluation
\[  f(l')=\la a+b+x\sq\ra_pq.  \]
But, by \eqref{9.14},
\[  (a+b+x\sq)q=\frac Lr+(p+b-x\sr)t+x\sq q.  \]
Therefore, it must be that
\begin{equation}\label{9.15}
\begin{aligned}
f(l') &=\frac Lr+(p+b-x\sr)t+x\sq q-pq\\
&=\frac Lr+(p+b-x\sr)t-x'_Q q,
\end{aligned}
\end{equation}
and that
\begin{equation}\label{9.16}
x_{l'}=\la a+b+x\sq\ra_p=a+b-x'_Q.
\end{equation}
In particular, we see that the parameters $a$ and $b$ must also satisfy the requirement 
\begin{equation}\label{9.17}
a+b\ge x'_Q.
\end{equation}
Finally, combining \eqref{9.10}, \eqref{9.11} and \eqref{9.17} establishes \eqref{9.3}.

The existence of integers $a$ and $b$ satisfying \eqref{9.3}, as required by the hypothesis on $l$ and $l'$ of this case, implies that, in addition to $x\sr>1$,
\[  x\sr>x'_Q-a\ge x'_Q-x'_R+1=x_R-x_Q+1,  \]
and we see that $x_Q>1$. In the opposite direction, assume that $x_Q, x_R>1$. Now let $b$ be any integer satisfying
\[  \max(1,x_R-x_Q+1)\le b<x_R,  \]
so that $x'_Q-b<x'_R$, and take any integer $a$ in the range
\[  \max(0,x'_Q-b)\le a<x'_R.  \]
Note that every such pair of $a$ and $b$ satisfy \eqref{9.3}. Now let $l$ and $l'$ be given by the formulas in \eqref{9.2}, and let $M$ be any integer in the range \eqref{9.4}, so that $l\in I_M$ and $l'\in I_{M+Q+R}$. One readily verifies that the relevant parts of the forgoing argument are reversible showing that $l$ and $l'$ satisfy \eqref{9.1}. This completes the proof of the lemma.
\end{proof}

\begin{lemma}\label{L14}
Under the assumptions of \refL{13} and for $l$ of the form \eqref{9.2}, we have
\begin{equation}\label{9.18}
S(Q,R;M)\le\min(x\sr-b,a+1)+1,
\end{equation}
and \eqref{9.18} holds with equality for $M=l+p+b-x\sr$. Furthermore, we have
\begin{equation}\label{9.19}
\max_{\substack{(Q,R;M)\in\CM\\l\in I_M,\, l'\in I_{M+Q+R}}} S(Q,R;M)=\min(x'_R+1, x\sr, x\sq).
\end{equation}
\end{lemma}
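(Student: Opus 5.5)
The plan follows the template of \refL{11} and \refL{12}. First I would compute $\chi$ explicitly on windows around each of the four points $l$, $L=l+R$, $l+Q$ and $l+Q+R$. Then I would read off $S(Q,R;M)$ as a signed count of the ones in the four subranges. Finally I would optimize over the admissible $(a,b)$ of \eqref{9.3}.

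\textbf{Evaluating $\chi$.} I would work on ranges of the form $\CR_3(m)=(m-p,\,m+x\sr-b)\cap\N$. These contain every $I_M$ allowed by \eqref{9.4} together with its translates by $Q$, $R$ and $Q+R$.

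For $l$, I would use $x_l=a$, $y_l=\vta b+\eta\sr-t$ and $f(l)=l/r$. With \eqref{6.17} and \eqref{6.15} this gives
\[ f(l+i)=\la a+i\ra_pq+\la y_l-\vta i\ra_qp, \qquad (a+i)q+(y_l-\vta i)p=l/r+it. \]
As in the bullet lists of \refL{12}, this should show that $\chi(l+i)=1$ exactly for $-a\le i\le 0$, together with a second block near $i\approx -p+b$ created by the wrap-around of $\la y_l-\vta i\ra_q$. That block is of length $\approx b$, governed by $\eta\sr-t<0$. \refL{10} disposes of $i>0$.

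For $L$, I would use $x\sel=a+x\sr$ and $y\sel=\vta(p+b-x\sr)$ from \eqref{9.12}. This should give the ones of $\chi$ near $L$ as a block ending just before $L'=L+B$, with $\chi(L')=0$.

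For $l'$, I would use $x_{l'}=a+b-x'_Q$ from \eqref{9.16}. Arguing exactly as for $l$ in \refL{11}, this gives $\chi(l'+i)=1$ iff $-(a+b-x'_Q)\le i\le 0$.

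Near $l+Q$ I expect $\chi\equiv0$, by the same inequalities $l/r<l/r+x\sq q+it<pq$ used for \eqref{8.13}.

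\textbf{Computing $S(Q,R;M)$.} With these four profiles, $S(Q,R;M)$ equals the number of ones in $I_M$, minus those in $I_{M+R}$ and $I_{M+Q}$, plus those in $I_{M+Q+R}$. Sliding $M$ through the range \eqref{9.4} should show that the count is maximized at the left endpoint $M=l+p+b-x\sr$. There the two positive blocks contribute and the negative block is cut down to size. This should yield the bound \eqref{9.18}, with equality at that endpoint.

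\textbf{Optimizing over $(a,b)$.} I would maximize $\min(x\sr-b,\,a+1)+1$ over the constraints \eqref{9.3}. For fixed $a$ one takes $b=\max(1,x'_Q-a)$, which gives the value
\[ \min\bigl(x\sr-1,\;x\sr-x'_Q+a,\;a+1\bigr)+1. \]
Increasing $a$ up to $x'_R-1$ then produces the bounds $x\sr$ and $x'_R+1$. The bound $x\sq$ should come from the middle term, via the identity $x\sr-x'_Q+a+1=a+1-(x'_R-x\sq)$ combined with $a<x'_R$. Exhibiting explicit $a,b$ that attain each of the three values in $\min(x'_R+1,x\sr,x\sq)$ then gives \eqref{9.19}.

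\textbf{Main obstacle.} The hardest step will be the exact evaluation of $\chi$ near $l$ and near $L$. The representation $y_l=\vta b+\eta\sr-t$ makes the wrap-around thresholds for $\la y_l-\vta i\ra_q$ depend delicately on $\eta\sr$ and on the sign $\ep\sr$. Getting the endpoints of the blocks exactly right, not off by one, is what determines whether the bound is $x\sr-b$ or $x\sr-b+1$. Here I would lean on \eqref{6.28} and on the facts $\vta\ge p$ and $0<\eta\sr<t$ to pin down every floor.
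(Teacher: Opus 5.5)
There is a genuine gap, and it lies in the range you dismiss. Your claim that $\chi\equiv0$ near $l+Q$ ``by the same inequalities used for \eqref{8.13}'' is false in Case~3. That argument rested on \eqref{8.12}, $a+b+x_Q<p$. Here \refL{13} imposes the opposite constraint \eqref{9.17}, $a+b\ge x'_Q$, i.e.\ $a+b+x_Q\ge p$. Concretely, take $x_l=a$, $y_l=\vartheta b+\eta_R-t$ and $\max(b-x_R+1,\,x'_Q-a)\le i<b$. Then $\langle\vartheta(b-i)+\eta_R-t\rangle_q=\vartheta(b-i)+\eta_R-t$, but $\langle a+x_Q+i\rangle_p=a+x_Q+i-p$, so
\[ f(l+Q+i)=l/r-x'_Qq+it<l/r, \]
and therefore $\chi(l+Q+i)=1$. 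This is the content of \eqref{9.30}--\eqref{9.31}: $\chi(\cdot+Q)$ has exactly $a+b-x'_Q$ ones on $I(l)=(l+b-x_R,\,l+p+b-x_R]$. When $a\ge x'_Q$ they may split into two blocks, the second being the wrap-around block $p+x'_Q-a\le i<p$. That is why the paper separates the cases $a<x'_Q$ and $a\ge x'_Q$. This, not the evaluation near $l$ or $L$, is the delicate step of the lemma.

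Near $L=l+R$, by contrast, nothing happens. On $(L+b-x_R,\,L+p)$, the window containing every admissible $I_{M+R}$, there are no reductions for $b-x_R<i<0$, since $x_L+i>a+b\ge0$ and $\vartheta(p+b-x_R-i)<q$. Hence $f(L+i)=L/r+pq+it$ there. Together with $\chi(L)=0$ and \refL{10}, this gives $\chi\equiv0$ on that window, which is \eqref{9.22}. A block of ones ``ending just before $L'=L+B$'' cannot exist, because $0<B<p$ and \refL{10} applies. The ones that do occur near $L$ end at $L'-p-1$, to the left of every $I_{M+R}$.

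Part of the confusion comes from your windows. The window $(m-p,\,m+x_R-b)$ centred at $m=l$ does not contain $I_M$ for $M$ in \eqref{9.4}; take $M=l+p-1$. The correct windows are $(m+b-x_R,\,m+p)$ for $m=l,\,l+R,\,l+Q,\,l+Q+R$. On the first of these the ones of $\chi$ form the single block $\max(b-x_R+1,-a)\le i\le0$; your ``second block near $i\approx-p+b$'' lies outside it.

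With the correct profiles, at $M=l+p+b-x_R$ the four terms contribute as follows:
\begin{itemize}
\item $I_M$ gives $\min(x_R-b,a+1)$;
\item $I_{M+Q+R}$ gives $a+b-x'_Q+1$;
\item $I_{M+Q}$ gives $-(a+b-x'_Q)$;
\item $I_{M+R}$ gives $0$.
\end{itemize}
Together these give \eqref{9.46}. So the cancellation of the $l'$-block comes from $I_{M+Q}$, not from $I_{M+R}$; your description of the $l'$-block itself is correct and agrees with \eqref{9.24}. Taking $\chi\equiv0$ near $l+Q$, as you propose, would give $\min(x_R-b,a+1)+a+b-x'_Q+1$. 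This exceeds \eqref{9.18} whenever $a+b>x'_Q$, so the upper bound needed for \eqref{9.19} would not follow.

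Your final optimization over $(a,b)$ is correct and agrees with \eqref{9.48}.
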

\begin{proof}
In the present case we work with the supper ranges of the form
\[  \CR_3(m)=(m+b-x\sr, m+p)\cap\N,  \]
for $m=l, l+R, l+Q$ and $l+Q+R$, where the parameter $b$ is specified in \eqref{9.2} and \eqref{9.3}. These serve the same purpose as the supper ranges $\CR_i(m)$ in the previous cases. Indeed, recall that $I_M$ contains both $l$ and $l+p+b-x\sr$, so that $M$ must be in the range \eqref{9.4}. Therefore $\CR_3(l)\supset I_M$ for all possible values of $M$ under consideration.

For the range $\CR_3(l)$ we show that
\begin{equation}\label{9.20}
\chi(l+i)=1 \quad\text{if and only if}\quad \max(b-x\sr+1, -a)\le i\le0,
\end{equation}
by showing that
\begin{equation}\label{9.21}
f(l+i)\le\Big\lfloor\frac{l+i}r\Big\rfloor \quad\text{if and only if}\quad \max(b-x\sr+1, -a)\le i\le0.
\end{equation}
By \refL{10}, it only remains to consider $b-x\sr<i\le0$. Here is the list of ingredients we need.
\begin{align*}
\bullet\quad &\Big\lfloor\frac{l+i}r\Big\rfloor=\frac lr+\Big\lfloor\frac ir\Big\rfloor.\\[4pt]
\bullet\quad &f(l+i)=\la a+i\ra_pq+\la\vta b+\eta\sr-t-\vta i\ra_qp. \\[4pt]
\bullet\quad &\la\vta b+\eta\sr-t-\vta i\ra_q=\vta b+\eta\sr-t-\vta i, \quad\text{since }b\le b-i<x\sr. \\[4pt]
\bullet\quad &\la a+i\ra_p=\begin{cases}
a+i, &\text{for } \max(b-x\sr+1,-a)\le i\le0\\
a+i+p, &\text{for } b-x\sr<i<\max(b-x\sr+1,-a). \end{cases} \\[4pt]
\bullet\quad &(a+i)q+(\vta b+\eta\sr-t-\vta i)p=\frac lr+it.
\end{align*}
From this list the assertion \eqref{9.21}, and hence \eqref{9.20}, are immediate.

Next we show that
\begin{equation}\label{9.22}
\chi\equiv0 \quad\text{on}\quad \CR_3(l+R).
\end{equation}
\refL{10} takes care of $l+R+i$ for $0<i<p$, and we know that
\[  \chi(l+R)=\chi(L)=0 \quad\text{and}\quad f(l+R)=\frac lr+\ep\sr+pq.  \]
To evaluate
\[
f(l+R+i)=\la a+x\sr+i\ra_pq+\la\vta b+\eta\sr-t+y\sr-\vta i\ra_qp
\]
in the remaining range $b-x\sr<i<0$, we observe that, by \eqref{9.8},
\[  \la a+x\sr+i\ra_p=a+x\sr+i=x\sel+i,  \]
and that, by \eqref{9.8} and \eqref{9.9},
\[  \vta b+\eta\sr-t+y\sr=y\sel=\vta(p+b-x\sr),  \]
whence
\[
\la\vta b+\eta\sr-t+y\sr-\vta i\ra_q=\vta(p+b-x\sr-i)=y\sel-\vta i.
\]
This shows that, for $b-x\sr<i\le0$, we have
\begin{align*}
f(l+R+i) &=(x\sel+i)q+(y\sel-\vta i)p=\frac{l+R}r+pq+it \\
&>\Big\lfloor\frac{l+R+i}r\Big\rfloor,
\end{align*}
and \eqref{9.22} follows.

Next we consider the range $\CR_3(l+Q+R)$. In this range it is more convenient to work with respect to $l'$, which was determined to be of the form
\[  l'=l+Q+R+p+b-x\sr=l+R+x'_R+b+Q  \]
in \refL{13}. To that end, we note that
\[  \CR_3(l+Q+R)=(l'-p,l'+x\sr-b)\cap\N.  \]
So we wish to evaluate $f(l'+j)$, for $-p<j<x\sr-b$, and compare it to
\begin{equation}\label{9.23}
\Big\lfloor\frac{l'+j}r\Big\rfloor=\frac lr+\ep\sr+\big\lfloor\frac{\ep\sq}r\Big\rfloor.
\end{equation}
We will show that, in this range,
\begin{equation}\label{9.24}
f(l'+j)\le\Big\lfloor\frac{l'+j}r\Big\rfloor \quad\text{if and only if}\quad x'_Q-a-b\le j\le0.
\end{equation}

For the convenience of the reader, let us now recall the key attributes of $l'$ determined in \refL{13}. We have, by \eqref{9.16} and \eqref{9.15}:
\begin{itemize}
\item \:$x_{l'}=a+b-x'_Q,\ y_{l'}=0$,
\item \:$f(l')=(a+b-x'_Q)q=l/r+\ep\sr+(x'_R+b)t-x'_Qq$.
\end{itemize}
We are now ready to proceed. In preparation for comparing $f(l'+j)$ to \eqref{9.23} we observe that
\begin{equation}\label{9.25}
f(l'+j)=\la a+b-x'_Q+j\ra_pq+\la-\vta j\ra_qp
\end{equation}
and
\begin{equation}\label{9.26}
(a+b-x'_Q+j)q-\vta jp=\frac lr+\ep\sr+(x'_R+b+j)t-x'_Qq.
\end{equation}
We have,
\begin{equation}\label{9.27}
\la-\vta j\ra_q=\begin{cases}
q-\vta j, &\text{for } 0<j<x\sr-b \\
-\vta j, &\text{for } -p<j\le0.  \end{cases}
\end{equation}
Note that, by \eqref{9.3},
\[  (a+b-x'_Q)+(x\sr-b)=a-x'_R+x\sq<x\sq<p,  \]
whence
\begin{equation}\label{9.28}
\la a+b-x'_Q+j\ra_p=\begin{cases}
a+b-x'_Q+j, &\text{for } x'_Q-a-b\le j<x\sr-b \\
a+b-x'_Q+j+p, &\text{for } -p<j<x'_Q-a-b.  \end{cases}
\end{equation}
Combining \eqref{9.25}-\eqref{9.28} and \eqref{9.23} yields \eqref{9.24}.

Condition \eqref{9.24} determines, in the usual way, the evaluation of $\chi$ on $\CR_3(l+Q+R)$. However, we will find it helpful to have this evaluation with respect to $l+Q+R$ rather than $l'$, and, moreover, in the following form. We have,
\begin{equation}\label{9.29}
\chi(l+Q+R+i)=1
\end{equation}
if and only if $(p+b-x\sr)-(a+b-x'_Q)\le i\le p+b-x\sr$. This is immediate from \eqref{9.24} and \eqref{9.2}. It helps to note that, by \eqref{9.3}, $a+b<p$, so that
\[  (p+b-x\sr)-(a+b-x'_Q)>b-x\sr+x'_Q>b-x\sr.  \]

We now come to the range $\CR_3(l+Q)$. Our analysis of this range will proceed along familiar lines, but it will require somewhat greater effort than the other ranges. We shall see that there are two cases resulting in different outcomes. Namely, if $a<x'_Q$, then, for $b-x\sr<i<p$, we have
\begin{equation}\label{9.30}
\chi(l+Q+i)=1 \quad\text{if and only if}\quad x'_Q-a\le i<b,
\end{equation}
but if $a\ge x'_Q$, then
\begin{equation}\label{9.31}
\chi(l+Q+i)=1
\end{equation}
if and only if $\max(b-x\sr+1,x'_Q-a)\le i<b$ or $p+x'_Q-a\le i<p$. As usual, we will obtain these evaluations by solving the inequality
\[  f(l+Q+i)\le\Big\lfloor\frac{l+Q+i}r\Big\rfloor=\frac lr+\Big\lfloor\frac{\ep\sq}r\Big\rfloor,  \]
for the range of $i$ in question. Note that since $f(l+Q+i)\not\equiv l/r\pmod{pq}$, this inequality is equivalent to the inequality
\begin{equation}\label{9.32}
f(l+Q+i)<l/r,
\end{equation}
and we wish to show that it holds precisely for the subranges of $i$ given in \eqref{9.30} and \eqref{9.31}, according to the corresponding case.

We begin by listing basic facts of the case. We have:
\begin{gather}
x_{l+Q}=\la a+x\sq\ra_p, \quad y^{}_{l+Q}=y_l=\vta b+\eta\sr-t, \notag\\
f(l+Q+i)=\la a+x\sq+i\ra_pq+\la\vta b+\eta\sr-t-\vta i\ra_qp, \label{9.33}\\
(a+x\sq+i)q+(\vta b+\eta\sr-t-\vta i)p=l/r+x\sq q+it, \label{9.34}
\end{gather}
and
\begin{equation}\label{9.35}
\la\vta b+\eta\sr-t-\vta i\ra_q=\begin{cases}
\vta b+\eta\sr-t-\vta i+q, &\text{for } b\le i<p\\
\vta b+\eta\sr-t-\vta i, &\text{for } b-x\sr<i<b.  \end{cases}
\end{equation}
The analogue of \eqref{9.35} for $\la a+x\sq+i\ra_p$ requires us to consider the two cases mentioned above. We treat the simpler of the two cases $a<x'_Q$ first. In this case we have
\begin{equation}\label{9.36}
\la a+x\sq+i\ra_p=\begin{cases}
a+x\sq+i-p, &\text{for } x'_Q-a\le i<p\\
a+x\sq+i, &\text{for } b-x\sr<i<x'_Q-a,  \end{cases}
\end{equation}
since, by \eqref{9.3}, $a+b+x\sq\ge p$. Combining \eqref{9.33}-\eqref{9.36}, we deduce that \eqref{9.32} holds if and only if $x'_Q-a\le i<b$. This completes the proof of \eqref{9.30}.

Now suppose that $a\ge x'_Q$, so that
\[  x^{}_{l+Q}=\la a+x\sq\ra_p=a-x'_Q.  \]
In this case it is better to rewrite \eqref{9.33} and \eqref{9.34} in the form
\begin{equation}\label{9.37}
f(l+Q+i)=\la a-x'_Q+i\ra_pq+\la\vta b+\eta_R-t-\vta i\ra_qp
\end{equation}
and
\begin{equation}\label{9.38}
(a-x'_Q+i)q+(\vta b+\eta_R-t-\vta i)p=l/r-x'_Qq+it.
\end{equation}
Evaluation of the coefficient of $q$ in \eqref{9.37} now takes the form
\begin{equation}\label{9.39}
\la a-x'_Q+i\ra_p=\begin{cases}
a-x'_Q+i-p, &\text{for } p+x'_Q-a\le i<p\\
a-x'_Q+i, &\text{for } \max(b-x\sr+1,x'_Q-a)\le i<p+x'_Q-a\\  
a-x'_Q+i+p, &\text{for } b-x\sr<i<x'_Q-a, \end{cases}
\end{equation}
where, of course, the third equality above is omitted if the range $b-x\sr<i<x'_Q-a$ is empty. Now, from the congruence
\[  f(l+Q+i)\equiv l/r-x'_Qq+it \pmod{pq},  \]
which holds by \eqref{9.37} and \eqref{9.38}, and the inequalities
\[  l/r-x'_Qq+it<pq \quad\text{and}\quad -x'_Qq+it>-pq  \]
it follows that \eqref{9.32} holds if and only if
\begin{equation}\label{9.40}
f(l+Q+i)=l/r-x'_Qq+it.
\end{equation}
To deduce the range of $i$ where this equality holds we appeal to \eqref{9.37}-\eqref{9.39} and \eqref{9.35}. We also need to note that $b<p+x'_Q-a$, which follows by \eqref{9.10} and \eqref{9.11}. One now readily verifies that \eqref{9.40} holds precisely in the range of $i$ specified in \eqref{9.31}. Since this is equivalent to the assertion \eqref{9.31}, the proof of \eqref{9.31} is now complete.

We are now ready to evaluate $S(Q,R;M)$. As we pointed out at the outset of this proof, $I_M\subset\CR_3(l)$ since $M$ must satisfy \eqref{9.4}, so that the forgoing analysis applies to $S(Q,R;M)$. In particular, by \eqref{9.22}, we have
\begin{equation}\label{9.41}
S(Q,R;M)=\sum_{n\in I_M}\bigl(\chi(n)-\chi(n+Q)+\chi(n+Q+R)\bigr).
\end{equation}
Our next step is to show that the choice of $M=l+p+b-x\sr$ is optimal in the sense that 
\begin{equation}\label{9.42}
S(Q,R;M)\le S(Q,R;l+p+b-x\sr).
\end{equation}
To see this we note that for $p+b-x\sr<i<p$, we have
\[  \chi(l+i)=\chi(l+Q+R+i)=0,  \]
while $\chi(l+Q+i)$ might be 1, by \eqref{9.20}, \eqref{9.29} and \eqref{9.31}. Furthermore, for $b-x\sr<i\le0$, we have
\[  \chi(l+i)-\chi(l+Q+i)\ge0,  \]
by \eqref{9.20}, \eqref{9.30} and \eqref{9.31}. Applying these observations to the general sum \eqref{9.41} shows that \eqref{9.42} holds.

So the crux of the matter is the evaluation of $S(Q,R;l+p+b-x\sr)$. By \eqref{9.41}, we have
\begin{equation}\label{9.43}
S(Q,R;l+p+b-x\sr)=\sum_{n\in I(l)}\bigl(\chi(n)+\chi(n+Q+R)\bigr)-\sum_{n\in I(l)}\chi(n+Q),
\end{equation}
where $I(l)=(l+b-x\sr,l+b-x\sr+p]$. Applying \eqref{9.20} and \eqref{9.29} to the first of these sums gives
\begin{equation}\label{9.44}
\sum_{n\in I(l)}\bigl(\chi(n)+\chi(n+Q+R)\bigr)=\min(x\sr-b,a+1)+a+b-x'_Q+1.
\end{equation}
To evaluate the second sum on the right of \eqref{9.43} we must consider the cases $a<x'_Q$ and $a\ge x'_Q$ separately. In the former, we have, by \eqref{9.30},
\begin{equation}\label{9.45}
\sum_{n\in I(l)}\chi(n+Q)=a+b-x'_Q.
\end{equation}
In the latter case, evaluation \eqref{9.31} gives
\begin{align*}
\sum_{n\in I(l)}\chi(n+Q) &= \sum_{\max(b-x\sr+1,x'_Q-a)\le i<b}1 +\sum_{p+x'_Q-a\le i\le p+b-x\sr}1 \\
 &=\min(x\sr-1,a+b-x'_Q)+\max(0,a+b-x'_Q-x\sr+1) \\
 &=a+b-x'_Q.
\end{align*}
Thus the equation \eqref{9.45} holds in this case as well. Therefore, by \eqref{9.43}-\eqref{9.45}, we get
\begin{equation}\label{9.46}
S(Q,R;l+p+b-x\sr)=\min(x\sr-b,a+1)+1.
\end{equation}
The first claim \eqref{9.18} of the lemma now follows by \eqref{9.42} and \eqref{9.46}.

Finally, recall that we showed in \refL{13} that $(Q,R;M)\in\CM$ and $l'\in I_{M+Q+R}$ if and only if $l$ and $l'$ are given in \eqref{9.2} and $M$ satisfies \eqref{9.4}. Therefore, by \eqref{9.2}-\eqref{9.4}, \eqref{9.42} and \eqref{9.46}, we have, 
\begin{equation}\begin{aligned}\label{9.47}
\max_{\substack{(Q,R;M)\in\CM\\l\in I_M,\, l'\in I_{M+Q+R}}} S(Q,R;M)
&=\max_{l\text{ in \eqref{9.2}}} S(Q,R;l+p+b-x\sr)\\
&=\max_{a,b \text{ in \eqref{9.3}}} \min(x\sr-b,a+1)+1,
\end{aligned}\end{equation}
where the last maximum is taken over the admissible pairs $a$ and $b$ satisfying \eqref{9.3}. The shape of the constraint \eqref{9.3} makes it plain that to maximize $\min(x\sr-b,a+1)$ we should use the largest possible value of $a$, namely $a=x'_R-1$. With this choice of $a$, the lower bound on $b$ imposed by \eqref{9.3} becomes
\[  b\ge\max(1, x'_Q-x'_R+1)=\max(1,x\sr-x\sq+1).  \]
Therefore,
\begin{equation}\label{9.48}
\max_{a,b \text{ in \eqref{9.3}}} \min(x\sr-b,a+1)=\min(x'_R, x\sr-1, x\sq-1).
\end{equation}
Substituting \eqref{9.48} into \eqref{9.47} gives \eqref{9.19} and completes the proof of the lemma.
\end{proof}

\subsection{Completion}
Recall that we choose to treat Part (i) of the theorem as a special case of \refT{4} and that we are now assuming that $t>1$. For any of the four possible choices of the pair $Q$ and $R$, Lemmas 19, 20 and 22 tell us that
\begin{equation}\label{10.1}
\max_{(Q,R;M)\in\CM}S(Q,R;M)=\max\bigl[\min(x\sr+1,x'_R,x'_Q),\min(x'_R+1,x\sr,x\sq)\bigr].
\end{equation}
Recall the definition of $\ut$ and $\us$ and observe that, by \eqref{6.21} and \eqref{6.24},
\[  \ut=\min(x\sq,x'_Q) \quad\text{and}\quad \us=\min(x\sr,x'_R),  \]
for every choice of $Q$ and $R$. Suppose now that $\us<\ut$. Then $x\sq,x'_Q>\min(x\sr,x'_R)$ and \eqref{10.1} gives
\[
\max_{(Q,R;M)\in\CM}S(Q,R;M)=\max\bigl[\min(x\sr+1,x'_R),\min(x'_R+1,x\sr)\bigr]=s+1
\]
($Q$ and $R$ arbitrary). Part (ii) of the theorem now follows by \refL{8}.

Next, observe that, by \eqref{6.24}, $x'_Q=x_{-Q}$ and $x'_R=x_{-R}$, whence
\[  \min(x\sr+1, x'_R, x'_Q)=\min(x'_{-R}+1, x^{}_{-R}, x^{}_{-Q})  \]
and
\[  \min(x'_R+1, x\sr, x\sq)=\min(x^{}_{-R}+1, x'_{-R}, x'_{-Q}).  \]
Using these identities in \eqref{10.1} shows that, for any specific pair of $Q$ and $R$, we have
\begin{equation}\label{10.2}
\max_{(Q,R;M)\in\CM}S(Q,R;M)=\max_{(Q,R;M)\in\CM}S(-Q,-R;M).
\end{equation}
This identity is handy for the case $\us\ge\ut$ which we now consider. We make a specific choice of $Q$ and $R$ according to the value of $\ut$, namely
\begin{equation}\label{10.3}
\bigl(Q,R\bigr)=\begin{cases}
(q,r), &\text{if } \ut=t\\
(-q,-r), &\text{if } \ut=p-t.  \end{cases}
\end{equation}
Note that for this choice of $Q$ and $R$, we have
\[  x\sq=\ut \quad\text{and}\quad x'_Q\ge x\sr,x'_R.  \]
Then, by \eqref{6.11}, \eqref{10.2} and \eqref{10.1} with $Q$ and $R$ given by \eqref{10.3}, we have
\[  A^+(T)=\max_{(Q,R;M)\in\CM}S(Q,R;M)=\min(x\sr+1,x'_R).  \]
Furthermore, in exactly the same way we obtain, by \eqref{6.12},
\[  -A^-(T)=\max_{(Q,R;M)\in\CM}S(Q,-R;M)=\min(x^{}_{-R}+1,x'_{-R})=\min(x'_R+1,x\sr).  \]
Using the values of $x\sr$, $\us$ or $p-\us$, in these equations yields Parts (iii) and (iv) and completes the proof of the theorem.

\end{document}